\documentclass[a4paper,12pt]{amsart}
\usepackage{hyperref}
\usepackage{amsmath,amsthm,amssymb,amscd}
\usepackage{color}

\title[Lipschitz homotopy convergence II]{Lipschitz homotopy convergence of Alexandrov spaces II}

\author[T. Fujioka]{Tadashi Fujioka}
\address[T. Fujioka]{Department of Mathematics, Kyoto University, Kyoto 606-8502, Japan}
\email{fujioka.tadashi.5t@kyoto-u.ac.jp}

\author[A. Mitsuishi]{Ayato Mitsuishi}
\address[A. Mitsuishi]{Department of Applied Mathematics, Fukuoka University, Fukuoka 814-0180, Japan}
\email{mitsuishi@fukuoka-u.ac.jp}

\author[T. Yamaguchi]{Takao Yamaguchi}
\address[T. Yamaguchi]{Department of Mathematics, University of Tsukuba, Tsukuba 305-857, Japan}
\email{takao@math.tsukuba.ac.jp}

\date{\today}
\subjclass[2020]{53C20, 53C23}
\keywords{Alexandrov spaces, Gromov--Hausdorff convergence, noncollapsing, stability theorem, Lipschitz homotopy, gradient flows, local geometric contractibility, good coverings, extremal subsets, CAT spaces}

\theoremstyle{plain}
\newtheorem{thm}{Theorem}
\newtheorem{cor}[thm]{Corollary}
\newtheorem{prop}[thm]{Proposition}
\newtheorem{lem}[thm]{Lemma}
\newtheorem{clm}[thm]{Claim}
\newtheorem{subclm}[thm]{Subclaim}
\newtheorem{comp}[thm]{Complement}

\theoremstyle{definition}
\newtheorem{dfn}[thm]{Definition}
\newtheorem{rem}[thm]{Remark}

\newtheorem{conv}[thm]{Convention}

\numberwithin{thm}{section}
\numberwithin{equation}{section}

\DeclareMathOperator{\dist}{dist}

\DeclareMathOperator{\vol}{vol}
\DeclareMathOperator{\supp}{supp}

\newcommand{\step}[1]{\medskip\noindent\textit{#1.}}

\begin{document}

\begin{abstract}
We establish a quantitative version of the Lipschitz homotopy convergence introduced by Mitsuishi and Yamaguchi for a moduli space of compact Alexandrov spaces without collapsing.
Along the way, we obtain a Lipschitz version of Petersen's homotopy stability theorem that is applicable to more general settings, including CAT spaces.
We also show that the Lipschitz homotopies can be chosen to preserve the singular strata of Alexandrov spaces, i.e., extremal subsets.
\end{abstract}

\maketitle

\section{Introduction}

\subsection{Main result}\label{sec:main}

This paper is a continuation of \cite{MY:lip} written by the second and third authors.
The main result of \cite{MY:lip} was that the class of Alexandrov spaces with upper bounds on dimension and diameter and with lower bounds on curvature and volume contains only finitely many Lipschitz homotopy types.
As conjectured in \cite[\S7]{MY:lip}, it is quite natural to expect that there exists a uniform Lipschitz constant for the Lipschitz homotopies.
In this paper, we will solve this problem by establishing a quantitative version of the Lipschitz homotopy stability theorem \cite[1.2]{MY:lip}.

To state the main theorem, we introduce some terminology (see Section \ref{sec:pre} for more details).
Let $X$ and $Y$ be metric spaces.
For a constant $C>0$, we define the {\it $C$-Lipschitz homotopy distance}
\[d_{C\mathchar`-\mathrm{LH}}(X,Y)\]
between $X$ and $Y$ as the infimum of $\epsilon>0$ such that there exist $\epsilon$-approximations $f:X\to Y$ and $g:Y\to X$ that are $C$-Lipschitz, together with $C$-Lipschitz maps $F:X\times [0,\epsilon]\to X$ and $G:Y\times [0,\epsilon]\to Y$ satisfying
\[F(\cdot,0)=g\circ f,\quad F(\cdot,\epsilon)=1_X,\quad G(\cdot,0)=f\circ g,\quad G(\cdot,\epsilon)=1_Y,\]
where $1$ denote the identity maps.
We refer to such $f$ and $g$ as \textit{$(C,\epsilon)$-Lipschitz homotopy approximations} between $X$ and $Y$.
In other words, they are $\epsilon$-approximations that are also $C$-Lipschitz homotopy equivalence whose time interval has length $\epsilon$.

For $n\in\mathbb N$ and $D,v>0$, let
\[\mathcal A(n,D,v)\]
denote the class of compact $n$-dimensional Alexandrov spaces with curvature $\ge-1$, diameter $\le D$, and volume $\ge v$.
Recall that $\mathcal A(n,D,v)$ is compact with respect to the standard Gromov--Hausdorff distance, denoted by $d_\mathrm{GH}$.

The following is the main result of the present paper.

\begin{thm}\label{thm:main}
For given $n$, $D$, and $v$, there exists a constant $C\gg 1$ satisfying the following:
for any $M,M'\in\mathcal A(n,D,v)$ with Gromov--Hausdorff distance less than $C^{-1}$, we have
\[d_{C\mathchar`-\mathrm{LH}}(M,M')<Cd_\mathrm{GH}(M,M').\]
More precisely, for any $\epsilon$-approximation $f:M\to M'$, where $\epsilon<C^{-1}$, there exists a $(C,C\epsilon)$-Lipschitz homotopy approximation $\tilde f:M\to M'$ that is $C\epsilon$-close to $f$.
\end{thm}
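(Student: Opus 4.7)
The plan is to refine the construction from the prequel paper \cite{MY:lip}, tracking all constants carefully so that they depend only on $n$, $D$, $v$. The two essential ingredients are (i) a good covering of each $M \in \mathcal{A}(n,D,v)$ by sets that are uniformly Lipschitz contractible via gradient flows of semiconcave functions, and (ii) stability of such coverings under small Gromov-Hausdorff perturbations. Compactness of $\mathcal{A}(n,D,v)$ enters only to guarantee that the number of covering sets, their Lebesgue numbers, and the associated Lipschitz constants admit uniform bounds.

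First I would fix a uniform good covering $\{U_\alpha\}_{\alpha=1}^N$ of $M$ whose members and all nonempty finite intersections are Lipschitz contractible with constants depending only on $n, D, v$; such a covering can be built from strainer charts together with gradient flows of distance functions, with $N$ bounded by Bishop--Gromov. Given an $\epsilon$-approximation $\varphi : M \to M'$, I would transport this to a covering $\{U'_\alpha\}$ of $M'$ by taking appropriate thickenings of $\varphi(U_\alpha)$; for $\epsilon < C^{-1}$, stability of gradient flows under GH perturbation (in the spirit of Perelman--Kapovitch stability) ensures $\{U'_\alpha\}$ inherits the good-covering properties with comparable constants. Using Lipschitz partitions of unity subordinate to these coverings (whose Lipschitz constants are controlled by the uniform Lebesgue number), together with the local contractions, I would then assemble maps $f : M \to M'$ and $g : M' \to M$ that are uniformly Lipschitz and $O(\epsilon)$-close to $\varphi$ and its quasi-inverse respectively.

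Second, to build the homotopy $F : M \times [0,\epsilon] \to M$ between $g \circ f$ and $1_M$, I would exploit the fact that $g \circ f$ is already $O(\epsilon)$-close to $1_M$. On each patch of the covering, $x$ and $g \circ f(x)$ lie in a single Lipschitz-contractible set and can be joined by a short gradient curve of length $O(\epsilon)$. Parametrizing these curves at unit speed yields a homotopy defined on a time interval of length $O(\epsilon)$; after a uniform time rescaling I obtain the required interval $[0,\epsilon]$ while keeping both spatial and temporal Lipschitz constants of order $O(1)$. Gluing across patches is handled by the same Lipschitz partition of unity, and the homotopy $G$ on $M'$ is constructed symmetrically.

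The main obstacle is keeping the Lipschitz constant of $F$ uniformly bounded in both space and time while insisting the time interval has length exactly $\epsilon$. A naive construction that first produces a homotopy parametrized on $[0,1]$ and then rescales to $[0,\epsilon]$ would have temporal Lipschitz constant blowing up like $1/\epsilon$. The resolution is that each point must genuinely travel distance $O(\epsilon)$ in $M$, not a fixed distance; this is possible precisely because $g \circ f$ is $O(\epsilon)$-close to $1_M$, so each displacement is realized by a short unit-speed gradient curve inside a single contractible patch. A secondary but pervasive difficulty is verifying that every constant entering the argument---size of covering, Lebesgue number, Lipschitz constants of the contractions, and thresholds of perturbation stability---can be bounded uniformly on $\mathcal{A}(n,D,v)$, which is exactly where the noncollapsing hypothesis $\vol \ge v$ plays its essential role.
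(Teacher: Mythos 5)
Your high-level plan is in the right spirit (good covering at scale $\epsilon$, gradient-flow contractions, partitions of unity, stability under small Gromov--Hausdorff perturbation, and the crucial observation that the homotopy paths must have length $O(\epsilon)$ so that rescaling to a time interval of length $\epsilon$ does not blow up the Lipschitz constant). But the step where the construction actually closes is missing: you say ``gluing across patches is handled by the same Lipschitz partition of unity,'' yet this does not work in a general metric space. A partition of unity lets you form convex combinations of \emph{functions into a vector space}, not of homotopy \emph{paths in $M$}; there is no linear structure on $M$ in which to blend the gradient curve chosen on one patch with the different gradient curve chosen on an overlapping patch, and the two curves joining $x$ to $g\circ f(x)$ in two different contractible sets need not agree or even be close enough to interpolate. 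Resolving this incompatibility is the entire technical content of both proofs in the paper. In the first proof one shows that $M$ is $(\tilde C,\epsilon)$-Lipschitz homotopy equivalent to the $\epsilon$-geometric realization $|\mathcal N|$ of the nerve (Theorem~\ref{thm:nerve}): the linear structure of the simplices of $|\mathcal N|$ is what allows the local contractions to be blended, via an auxiliary space $\mathcal D=\bigcup_\sigma\sigma\times U_\sigma$, the mapping cylinder $\mathcal M$, and a reverse induction on skeleta (Subclaim~\ref{subclm:Phi}). In the second proof the role is played by a Lipschitz \emph{domination} $\Theta:M\to|\mathcal N|$, $\zeta:|\mathcal N|\to M$ (Proposition~\ref{prop:dom}) together with a simplicial extension lemma (Proposition~\ref{prop:simp}): two close Lipschitz maps from a polyhedron into a locally Lipschitz contractible space are Lipschitz homotopic, again because one can extend skeleton by skeleton inside the contractible patches. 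Your proposal needs one of these mechanisms --- or something equivalent --- to replace the partition-of-unity gluing.

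Two secondary issues. First, transporting the covering to $M'$ ``by taking appropriate thickenings of $\varphi(U_\alpha)$'' is not well posed: $\varphi$ is merely an $\epsilon$-approximation (not even continuous), so $\varphi(U_\alpha)$ has no useful structure. The paper instead lifts the defining strictly concave functions $h_j$ to $M'$ and takes their superlevel sets (Theorems~\ref{thm:conv}(3) and~\ref{thm:good}(4)); the stability of the resulting contractions is itself a nontrivial blow-up argument (Claim~\ref{clm:stab}), not an immediate consequence of Perelman--Kapovitch stability. Second, the total number $N$ of covering sets at scale $\epsilon$ is \emph{not} uniformly bounded --- it grows like $\epsilon^{-n}$ --- only the local multiplicity is bounded by Bishop--Gromov, which is what actually controls the Lipschitz constant of the partition of unity.
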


As a direct consequence, we have

\begin{cor}\label{cor:main}
$\mathcal A(n,D,v)$ is compact with respect to the $C$-Lipschitz homotopy distance.
There are only finitely many $C$-Lipschitz homotopy types in $\mathcal A(n,D,v)$.
\end{cor}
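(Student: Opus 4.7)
The plan is to deduce Corollary~\ref{cor:main} directly from Theorem~\ref{thm:main} combined with the Gromov-Hausdorff precompactness of $\mathcal A(n,D,v)$. The key observation is that Theorem~\ref{thm:main} converts a sufficiently small GH distance into a comparably small $C$-Lipschitz homotopy distance, so GH compactness automatically upgrades to $d_{C\mathchar`-\mathrm{LH}}$ compactness, and a single GH-limit point $C$-dominates an entire tail of any sequence in $\mathcal A(n,D,v)$.

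For the first assertion, I would start with an arbitrary sequence $\{M_i\}\subset\mathcal A(n,D,v)$. Gromov-Hausdorff precompactness, together with the closedness of $\mathcal A(n,D,v)$ under noncollapsing GH limits (stated in the excerpt), yields a subsequence converging in $d_\mathrm{GH}$ to some $M_\infty\in\mathcal A(n,D,v)$. For $i$ large, $\epsilon_i:=d_\mathrm{GH}(M_i,M_\infty)<C^{-1}$, so Theorem~\ref{thm:main} furnishes a $(C,C\epsilon_i)$-Lipschitz homotopy approximation between $M_i$ and $M_\infty$; by definition of the $C$-Lipschitz homotopy distance, this gives $d_{C\mathchar`-\mathrm{LH}}(M_i,M_\infty)\le C\epsilon_i\to 0$, establishing sequential compactness.

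For the finiteness conclusion, I would argue by contradiction. If $\mathcal A(n,D,v)$ contained infinitely many $C$-Lipschitz homotopy types, I would choose representatives $\{M_i\}$ of pairwise distinct types. The argument above produces a subsequence with $d_{C\mathchar`-\mathrm{LH}}(M_i,M_\infty)\to 0$, and for all large $i,j$ both $M_i$ and $M_j$ are then $C$-Lipschitz homotopy equivalent to $M_\infty$ in the sense of the definition preceding Theorem~\ref{thm:main}. Composing these equivalences yields $C^2$-Lipschitz homotopy equivalences among the $M_i$, contradicting the pairwise distinctness of their types once the constant $C$ in Corollary~\ref{cor:main} is taken large enough to absorb the square (which is permitted by the ``$C\gg 1$'' convention).

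The main obstacle is essentially bookkeeping rather than mathematical: one has to track the multiplicative loss in Lipschitz constants under composition of homotopy equivalences, and accept that the ``$C$'' appearing in the finiteness statement may need to be enlarged relative to the one governing the compactness statement. Beyond this, no new ingredients are required: the corollary is a direct packaging of Theorem~\ref{thm:main} and the classical GH precompactness of $\mathcal A(n,D,v)$.
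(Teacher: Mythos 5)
Your plan is correct and is exactly the kind of one-step deduction the paper intends when it calls the corollary a ``direct consequence'' of Theorem~\ref{thm:main}; the compactness half is clean as you wrote it. For the finiteness half, though, two small points of bookkeeping deserve attention. First, the loss under composition of Lipschitz homotopy equivalences is a bit worse than $C^2$: the composed maps $g'\circ f$ and $g\circ f'$ are $C^2$-Lipschitz, but the homotopy for $(g\circ f')\circ(g'\circ f)\simeq 1$ conjugates the homotopy on $M_\infty$ by $f$ and $g$ and then concatenates with the homotopy on $M_i$, which costs roughly $C^3$ (and also dilates the $\epsilon$ of the approximation). Second, and more to the point, when you say ``take the $C$ in Corollary~\ref{cor:main} large enough to absorb the square,'' you are silently changing the constant that defines ``pairwise distinct types'' in the middle of the argument. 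The clean way to phrase this is to keep two constants: with $C_0$ the constant from Theorem~\ref{thm:main}, fix $C_1$ large enough to bound the composition of two $C_0$-Lipschitz homotopy equivalences (say $C_1\ge 2C_0^3$), assume for contradiction there are infinitely many $C_1$-LH types with representatives $M_i$, extract a GH-convergent subsequence $M_i\to M_\infty$, use Theorem~\ref{thm:main} to make each $M_i$ (for large $i$) $C_0$-LH equivalent to $M_\infty$, and conclude that $M_i$ and $M_j$ are $C_1$-LH equivalent for large $i,j$ --- the desired contradiction. Alternatively, and more simply, one can avoid composition entirely: take a finite $\delta$-net $\{M_1,\dots,M_k\}$ of $(\mathcal A(n,D,v),d_\mathrm{GH})$ with $\delta<C_0^{-1}$; Theorem~\ref{thm:main} then makes every $M\in\mathcal A(n,D,v)$ $(C_0,C_0\delta)$-Lipschitz homotopy equivalent to some $M_i$, giving finiteness directly with the same constant $C_0$.
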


Our results are new even in the Riemannian case (cf.\ \cite{GP}, \cite{GPW}, \cite{GPW:err}), since the previous results are all topological.
Theorem \ref{thm:main} and Corollary \ref{cor:main} are closely related to Perelman's stability theorem (\cite{Per:alex}, \cite{K:stab}), which asserts that under the same assumption as in Theorem \ref{thm:main}, $M$ and $M'$ are homeomorphic.
In particular, this implies that the number of homeomorphism types in $\mathcal A(n,D,v)$ is finite.
Although Perelman claimed that the stability homeomorphism can be chosen to be bi-Lipschitz, the proof has never been published.
For a recent progress in this direction, see \cite{Al}.

Our approach is actually quite different from the proof of Perelman's stability theorem and applicable to other settings, as we will see in the next subsection.
In particular, the notion of quantitative Lipschitz homotopy convergence is useful beyond the realm of Alexandrov spaces, especially when one cannot expect the homeomorphism stability.
For instance, see the third author's recent joint work \cite{NSY}, \cite{YZ}.

\subsection{Strategies}

There are two ways to prove the main theorem.
One is the same strategy as in the previous paper \cite{MY:lip}, which uses the good coverings of Alexandrov spaces introduced in \cite{MY:good}.
The other strategy is new: a Lipschitz analogue of Petersen's argument in \cite{Peter} (cf.\ \cite{GPW:err}) relying only on the local contractibility of Alexandrov spaces.
The first proof is a refinement of the previous one in \cite{MY:lip} and the second proof is partly a generalization of the first.
Therefore, we will present the two proofs in this order.

In both cases, the key geometric property is the same: the \textit{local uniform Lipschitz contractibility} of Alexandrov spaces, Theorem \ref{thm:conv}.
It asserts that any $\epsilon$-ball in an Alexandrov space $M\in\mathcal A(n,D,v)$ is contained in a convex domain admitting a $C$-Lipschitz contraction of time $\epsilon$, provided $\epsilon<C^{-1}$, where $C=C(n,D,v)$ is a constant depending only on $n$, $D$, and $v$.
This is a refinement of an earlier result of Perelman and Petrunin \cite[4.3]{PP:ext} (cf.\ \cite[7.1.3]{Pet:sc}) on convex hulls in Alexandrov spaces.
The proof is a combination of two techniques: the construction of a strictly concave function introduced by Perelman and Petrunin \cite[4.3]{PP:ext} and the stability of Lipschitz contractible balls proved by the second and third authors \cite[1.2]{MY:stab} (see also Remark \ref{rem:erratum}).
We remark that a similar argument can also be found in \cite{K:reg}.

In the first strategy using good coverings, we need an additional argument based on Theorem \ref{thm:conv} to construct a quantitative version of a good covering (Theorem \ref{thm:good}).
Once the existence of such a good covering is shown, the rest of the proof is almost the same as in the previous paper \cite{MY:lip}.
We prove that any metric space having a quantitative good covering is uniformly Lipschitz homotopy equivalent to a geometric realization of the nerve (Theorem \ref{thm:nerve}).
Via the stability of the nerve of a good covering, we obtain Theorem \ref{thm:main}.

In the second strategy based only on local contractibility, we prove the following Lipschitz version of Petersen's stability theorem \cite[Theorem A]{Peter}.
This is in some sense a generalization of the previous good covering argument.
See Section \ref{sec:main2} for the terminology.

\begin{thm}\label{thm:Peter}
For any $C>0$ there is $\tilde C>0$ satisfying the following:
Let $M$ and $M'$ be metric spaces that are locally $C$-Lipschitz contractible and locally $C$-doubling.
If the Gromov--Hausdorff distance between $M$ and $M'$ is less than $\tilde C^{-1}$, then we have
\[d_{\tilde C\mathchar`-\mathrm{LH}}(M,M')<\tilde Cd_\mathrm{GH}(M,M').\]
More precisely, for any $\epsilon$-approximation $f:M\to M'$, where $\epsilon<\tilde C^{-1}$, there exists a $(\tilde C,\tilde C\epsilon)$-Lipschitz homotopy approximation $\tilde f:M\to M'$ that is $\tilde C\epsilon$-close to $f$.
\end{thm}

Since Alexandrov spaces in $\mathcal A(n,D,v)$ satisfy the above assumptions with some constant $C=C(n,D,v)$, Theorem \ref{thm:Peter} immediately implies Theorem \ref{thm:main}.
The proof of Theorem \ref{thm:Peter} is along the same lines as the original one in \cite{Peter}, with careful checking of the Lipschitz constants.
However, the existence of a Lipschitz domination of an Alexandrov space by a polyhedron requires an additional argument not found in \cite{Peter} (Proposition \ref{prop:dom}).
Fortunately, this is done by modifying an argument in the first approach using good coverings.

\begin{rem}
The above two strategies are applicable not only to Alexandrov spaces, but also to any class of metric spaces having the required coverings (and for which the homeomorphism stability cannot be expected).
For example, CAT spaces with a uniform local doubling constant admit such coverings.
See Remark \ref{rem:CAT} for more details.
\end{rem}

\subsection{Modifications}\label{sec:mod}

There are two modifications of the main theorem.
One is the gluing with an almost isometry as in \cite[1.4]{MY:lip} (asked by John Lott).
For $\epsilon>0$ small, an \textit{$\epsilon$-almost isometry} between metric spaces means a bi-Lipschitz homeomorphism with Lipschitz constants $1\pm\epsilon$.
Recall that there exists an almost isometry between closed subsets of the strained parts of two nearby Alexandrov spaces (Theorem \ref{thm:reg}; see \cite[\S2.3]{MY:lip} for the definition and properties of strainers).

For an $n$-dimensional Alexandrov space $M$, let
\[\mathcal R_M(\delta,\ell)\]
denote the set of points in $M$ with $(n,\delta)$-strainers of length $>\ell$, where $\ell\le 1$ and $\delta$ is less than a sufficiently small constant depending only on $n$.
We also denote by $\tau(\delta)$ a positive function depending only on $n$ such that $\tau(\delta)\to0$ as $\delta\to0$.

\begin{thm}\label{thm:isom}
Under the same assumption as in Theorem \ref{thm:main}, the Lipschitz homotopy approximation $\tilde f:M\to M'$ can be chosen to be a $\tau(\delta)$-almost isometric open embedding on $\mathcal R_M(\delta,\ell)$, provided that the Gromov--Hausdorff distance between $M$ and $M'$ is less than $C^{-1}\delta\ell$.
\end{thm}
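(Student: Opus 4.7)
The plan is to refine the construction underlying Theorem \ref{thm:main} so that the Lipschitz homotopy approximation $f: M\to M'$ restricts to the $\tau(\delta)$-almost isometric open embedding $\phi: \mathcal R_M(\delta,\ell)\to M'$ supplied by Theorem \ref{thm:reg}. Note that once $d_{\mathrm{GH}}(M,M')<C^{-1}\delta\ell$, such a $\phi$ is built from $(n,\delta)$-strainer coordinates on both sides, and the non-quantitative analogue of this gluing was already carried out in \cite[1.4]{MY:lip}; the point is to combine that scheme with the quantitative machinery of Theorem \ref{thm:main}, Theorem \ref{thm:good}, and Theorem \ref{thm:conv}.

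The first step is to enlarge $\mathcal R_M(\delta,\ell)$ slightly to an open set $U\subset M$ on which $\phi$ is still defined and $\tau(\delta)$-almost isometric. Next, take a good covering of $M$ as supplied by Theorem \ref{thm:good}, but refined so that the patches meeting $U$ are strainer coordinate balls of radius comparable to $\ell$; on the $M'$ side, use their images under $\phi$ and extend to a good covering of $M'$ on the complement. Third, construct $f$ via the nerve-based gluing of Theorem \ref{thm:nerve}, prescribing the vertex maps on the strained patches to be determined by $\phi$ rather than by a generic choice. The partition-of-unity average of the prescribed vertex maps then equals $\phi$ on a slightly smaller neighborhood of $\mathcal R_M(\delta,\ell)$, while on the complement the construction reduces to that of Theorem \ref{thm:main}. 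The analogous modification for the backward map, homotopies, and approximation estimates goes through in the same way.

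The main obstacle is the transition region where the strained patches overlap with the generic patches: on such an overlap the two prescriptions of vertex maps are inequivalent, and they must be interpolated via convex combinations inside the convex hulls provided by Theorem \ref{thm:conv}. Here we use that the two prescriptions differ by at most $\tau(\delta)$ on the overlap, so the interpolation is performed on scales compatible with the quantitative contractibility, and the uniform Lipschitz constants of Theorem \ref{thm:conv} feed through the gluing to produce a $(C,C\epsilon)$-Lipschitz homotopy approximation whose restriction to $\mathcal R_M(\delta,\ell)$ is unchanged from $\phi$. This yields the desired $\tau(\delta)$-almost isometric open embedding property on $\mathcal R_M(\delta,\ell)$, provided $d_{\mathrm{GH}}(M,M')<C^{-1}\delta\ell$ is small enough that the enlargement $U$ and the interpolation scale remain within the regime where Theorem \ref{thm:reg} and Theorem \ref{thm:conv} apply.
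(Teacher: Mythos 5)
Your proposal takes a different route from the paper's, and it has a genuine gap in the gluing step. The paper does not modify the nerve/covering construction at all: it first produces the Lipschitz homotopy approximation $f$ via Theorem~\ref{thm:main} and the almost isometry $g$ via Theorem~\ref{thm:reg} \emph{independently}, and then glues them a posteriori using an explicit Lipschitz interpolation tool (Proposition~\ref{prop:diag} and Corollary~\ref{cor:diag}). That tool is a map $\Psi:\mathcal R(\delta,\ell)^2\cap B(\Delta,\epsilon)\times[0,\epsilon]\to M$, built from the gradient flow of $\dist_{S(\Delta,2\epsilon)}$ on $M\times M$, with $\Psi(x,y,0)=x$, $\Psi(x,y,\epsilon)=y$, $\Psi(x,x,t)=x$, and uniform Lipschitz constant. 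The gluing $h(x):=\Psi'(g(x),f(x),\min\{\dist_{\mathcal R_M(2\delta,\ell/2)}(x),\epsilon\})$ then transitions from $g$ to $f$ as $x$ leaves the strained region, and the same device glues the two homotopies.

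The gap in your argument is precisely where you write that the two prescriptions ``must be interpolated via convex combinations inside the convex hulls provided by Theorem~\ref{thm:conv}.'' Convex combinations are not defined in an Alexandrov space; even on a convex domain, geodesic midpoints are not unique and a naive ``average'' of two maps is not a map, let alone a Lipschitz one with controlled constant. This is exactly the difficulty that Proposition~\ref{prop:diag} (or, alternatively, a center-of-mass construction, which the paper mentions in passing) is designed to overcome, and without some such concrete device your transition region has no well-defined map. The uniform Lipschitz estimate on this interpolation is the heart of the proof, and it uses the $(1-\tau(\delta))$-regularity of $\dist_{S(\Delta,2\epsilon)}$ toward $\Delta$, which is a strainer-specific fact, not a consequence of the convex-hull contractibility alone.

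There is a secondary issue: your first step asks for a good covering whose patches near $U$ are ``strainer coordinate balls,'' but Theorem~\ref{thm:good} produces superlevel sets of strictly concave functions, not arbitrary balls. To substitute strainer coordinate balls you would need to re-prove the good covering conditions (in particular quantitative Lipschitz contractibility of all nonempty intersections and nerve stability under Gromov--Hausdorff perturbation) for this modified covering; this is not automatic and is not addressed. The paper's approach sidesteps this entirely by not touching the covering and gluing the two maps after the fact.
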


Although the proof is along the same lines as in the previous paper \cite{MY:lip}, we have to check the Lipschitz constant more carefully.

The other modification is new: a relative version of the main theorem with respect to extremal subsets.
\textit{Extremal subsets} are singular sets of Alexandrov spaces introduced by Perelman and Petrunin \cite{PP:ext}, which connect the geometric and topological structures of Alexandrov spaces (see Section \ref{sec:ext} for the definition).
It is known that in some sense any Alexandrov space is uniquely stratified into its extremal subsets (\cite{PP:ext}).
Kapovitch \cite[\S9]{K:stab} proved that the stability homeomorphism of Perelman can be chosen to preserve such extremal strata.
It is thus natural to expect that the same holds true for our Lipschitz homotopy equivalence.

As in Subsection \ref{sec:main}, let $f:X\to Y$ and $g:Y\to X$ be $(C,\epsilon)$-Lipschitz homotopy approximations between metric spaces $X$ and $Y$ with $C$-Lipschitz homotopies $F:X\times[0,\epsilon]\to X$ and $G:Y\times[0,\epsilon]\to Y$.
For subsets $A\subset X$ and $B\subset Y$, we say that $f$ and $g$ \textit{respect} $A$ and $B$ if
\[f(A)\subset B,\quad g(B)\subset A,\quad F(A,t)\subset A,\quad G(B,t)\subset B\]
for all $0\le t\le\epsilon$.

\begin{thm} \label{thm:ext}
Under the same assumption as in Theorem \ref{thm:main}, let
\[E\subset M,\quad E'\subset M'\]
be extremal subsets that are $C^{-1}$-close via the Gromov--Hausdorff approximation between $M$ and $M'$.
Then the Lipschitz homotopy approximations between $M$ and $M'$ can be chosen to respect $E$ and $E'$.
\end{thm}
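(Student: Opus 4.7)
The plan is to rerun the good-covering proof of Theorem~\ref{thm:main} while forcing every geometric construction to respect the extremal subsets $E$ and $E'$. The underlying geometric input is that gradient flows of semiconcave functions on Alexandrov spaces preserve extremal subsets (Perelman-Petrunin). Since the uniform Lipschitz contractions of Theorem~\ref{thm:conv} are produced by gradient flows of strictly concave functions of Perelman-Petrunin type, they automatically preserve the intersection with any extremal subset containing the center. This gives for free a \emph{relative} version of Theorem~\ref{thm:conv}: the $\epsilon$-ball around $p$ admits a convex hull $H_p$ and a $C$-Lipschitz contraction to $p$ such that, for every extremal subset $E'' \ni p$, both $H_p$ and the contraction preserve $H_p \cap E''$.

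Next, I would construct compatible quantitative good coverings $\{U_i\}$ of $M$ and $\{U_i'\}$ of $M'$ as in Theorem~\ref{thm:good}, whose centers $p_i$ and $p_i'$ are matched under the given approximation and, crucially, satisfy $p_i \in E$ if and only if $p_i' \in E'$. This is possible because the $C^{-1}$-closeness of $E$ and $E'$ lets one choose a common net on $E$ and $E'$ first, and then extend it to a common net on $M$ and $M'$; this is the Lipschitz analogue of Kapovitch's matching of centers in~\cite[\S9]{K:stab}. Running the nerve construction of Theorem~\ref{thm:nerve} with these matched data, and using only the contractions of those $U_i$ whose center lies in $E$ whenever the input lies in $E$ (and similarly for $E'$), one obtains Lipschitz homotopy approximations $f,g$ together with homotopies $F,G$ that respect $E$ and $E'$.

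The main obstacle is the matching step: building good coverings of $M$ and $M'$ with identical nerves \emph{and} with a common partition of centers into those lying in the extremal subsets and those outside, while keeping the Lipschitz bounds uniform. Once this matching is available, the Lipschitz estimates from the proof of Theorem~\ref{thm:main} carry over unchanged, because the partition-of-unity-style formulas in the nerve construction depend only on the matched centers and their contractions, which by the relative Theorem~\ref{thm:conv} already respect the extremal subsets. The Petersen-style proof of Theorem~\ref{thm:main} can be adapted in parallel: one replaces the local contractions by extremal-subset-preserving ones coming from the relative Theorem~\ref{thm:conv}, and performs the polyhedral domination of Proposition~\ref{prop:dom} inductively along the stratification of $M$ by extremal subsets.
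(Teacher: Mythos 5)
Your overall intuition is right: the essential geometric input is that the contractions of Theorem~\ref{thm:conv} are gradient flows of semiconcave functions, and gradient flows preserve extremal subsets. But the net-matching step you single out as the ``main obstacle'' is neither what the paper does nor fully justified, and as stated it does not actually solve the stability problem it is supposed to solve. The paper's first proof uses the generic good covering of Theorem~\ref{thm:good}, with no special choice of net. The key observations are: (i) whenever an intersection $U_\sigma$ meets $E$, the center $p_\sigma$ of its gradient-flow contraction is automatically forced into $E$, because a gradient curve from any point of $E\cap U_\sigma$ stays in $E$ and ends at $p_\sigma$ (Lemma~\ref{lem:cen}); and (ii) the nerve stability---$U_\sigma$ meets $E$ iff its lift $U'_\sigma$ meets $E'$---is handled in Complement~\ref{comp:good} via the peaks of the strictly concave defining functions: the peak of a domain intersecting $E$ lies in $E$ by the gradient estimate and extremality, the peaks of $U_\sigma$ and $U'_\sigma$ are uniformly close, and each domain contains a definite ball around its peak. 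Your matching of centers does not yield (ii): arranging $p_i\in E$ iff $p_i'\in E'$ says nothing direct about whether $U_\sigma$ touches $E$ (which can happen when none of the $p_i$ for $i\in\sigma$ lie in $E$), nor whether this is stable under the lift. That is the genuine gap.

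Two smaller inaccuracies: the contraction of Theorem~\ref{thm:conv} is not to $p$ (see the remark following it); the center is some $\hat q$, so the correct formulation is that the center is forced into $E$ by the flow, not that one may contract to $p$. And the Petersen-style adaptation in the paper only makes a one-sided choice ($p_j'\in E'$ whenever $B(p_j,2\epsilon)$ meets $E$), rather than the full ``common net on $E$ and $E'$ extended to $M$ and $M'$'' you describe; it also does not use the stratification of $M$ by extremal subsets. Finally, the paper gives a second, independent proof that post-composes any given Lipschitz homotopy approximation with the uniform Lipschitz neighborhood deformation retraction onto $E$ from \cite{F:reg}; that route avoids all covering bookkeeping and is worth knowing.
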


Since a limit of extremal subsets is also extremal (\cite[4.1.3]{Pet:sc}), the above situation naturally occurs under the convergence of Alexandrov spaces (although the limit Alexandrov space may have new extremal subsets).
Note that Theorem \ref{thm:ext} is independent of the previous Theorem \ref{thm:isom} because any proper extremal subset is contained in the non-strained part of the ambient Alexandrov space.

We will give two different proofs of Theorem \ref{thm:ext}.
In the first proof, the key fact is that any extremal subset is invariant under the gradient flows of semiconcave functions.
Since the Lipschitz homotopies of Theorem \ref{thm:main} are defined by gradient flows, it turns out that they automatically preserve extremal subsets.
The second proof is more general, which provides a way to deform any Lipschitz homotopy approximation (not necessarily given by gradient flows) to preserve an extremal subset.
This is done by using a neighborhood deformation retraction to an extremal subset constructed by the first author \cite[1.5]{F:reg}.

\begin{rem}
In Theorem \ref{thm:ext}, one can actually assert that the Lipschitz homotopy approximations between $M$ and $M'$ respect all nearby extremal subsets in $M$ and $M'$ simultaneously (the same is true for Perelman's stability theorem; see \cite[4.3]{HS} for instance).
This is obvious from the first proof explained above, and also can be shown by multiple applications of the second proof.
\end{rem}

\step{Organization}
Section \ref{sec:pre} contains background material.
Section \ref{sec:conv} is devoted to the proof of the key Theorem \ref{thm:conv} on the local Lipschitz contractibility of Alexandrov spaces.
In Section \ref{sec:good}, using it, we construct a quantitative version of a good covering in Theorem \ref{thm:good}.
Section \ref{sec:main1} provides the first proof of Theorem \ref{thm:main} using good coverings.
This section is almost the same as \cite[\S3]{MY:lip}.
Section \ref{sec:main2} provides the second proof of Theorem \ref{thm:main} relying only on local contractibility.
We prove Theorem \ref{thm:Peter} in a way parallel to \cite[\S2--\S4]{Peter}, except for Proposition \ref{prop:dom}.
In Section \ref{sec:isom}, we prove Theorem \ref{thm:isom}.
Finally, in Section \ref{sec:ext}, we give two proofs of Theorem \ref{thm:ext}.

\step{Acknowledgments}
The first author would like to thank Mohammad Alattar for responding to his inquiry regarding \cite{Al}.
The first author was supported by JSPS KAKENHI Grant Number 22KJ2099 (22J00100).
The second author was supported by JSPS KAKENHI Grant Number 20K03598.
The third author was supported by JSPS KAKENHI Grant Number 21H00977.

\section{Preliminaries} \label{sec:pre}

In this section, we fix notation, introduce some terminology for Lipschitz homotopy and the Gromov--Hausdorff convergence, and review the properties of gradient flows in Alexandrov spaces.

In this paper, an \textit{Alexandrov space} means a complete metric space of finite Hausdorff dimension that satisfies the Toponogov triangle comparison.
We refer to \cite{AKP}, \cite{BBI}, \cite{BGP} for the basic concepts on Alexandrov spaces.

\subsection{Notation}

The distance between $x$ and $y$ will be denoted by $|xy|$.
The open $r$-ball and $r$-sphere around a point $p$ are denoted by $B(p,r)$ and $S(p,r)$, respectively.
Similarly, the open $r$-neighborhood of a subset $A$ and its metric boundary are denoted by $B(A,r)$ and $S(A,r)$.
For a metric space $X$, the same space with metric multiplied by $\lambda>0$ is denoted by $\lambda X$.
We denote by $1_X$ the identity of $X$.

The distance function to a point $p$ will be denoted by $|p\cdot|$ or $\dist_p$.
The direction of a shortest path from $p$ to $q$ is denoted by $\uparrow_p^q$ and the set of directions of all shortest paths from $p$ to $q$ is denoted by $\Uparrow_p^q$.
We use the same notation for a subset $A$, i.e., $|A\cdot|$, $\dist_A$, $\uparrow_p^A$, and $\Uparrow_p^A$.
The angle and comparison angle will be denoted by $\angle$ and $\tilde\angle$, respectively.
The space of directions and tangent cone at $p$ will be denoted by $\Sigma_p$ and $T_p$, respectively.

Throughout the paper, $n$, $D$, and $v$ denote, respectively, the dimension, an upper diameter bound, and a lower volume bound of Alexandrov spaces.
Unless otherwise stated, the lower curvature bound is always $-1$.
We use the symbol $C$ to denote various large positive constants, usually depending only on $n$, $D$, and $v$.
We also use the symbol $\epsilon$ to denote a small positive number, usually assumed to be less than some small constant depending only on $n$, $D$, and $v$.
The usage of these symbols will be explained in more detail in Sections \ref{sec:main1} and \ref{sec:main2} (see Conventions \ref{conv:C} and \ref{conv:C'}).

\subsection{Lipschitz homotopy}

We introduce a quantitative version of the Lipschitz homotopy used in \cite{MY:lip}.
Fix $C>0$ and $\epsilon>0$ (usually $C\gg1$ and $\epsilon\ll1$, but not necessarily assumed here).
Loosely speaking, we will add the word ``$(C,\epsilon)$-Lipschitz'' to the terminology of homotopy theory if the homotopy under consideration is $C$-Lipschitz continuous and its time interval has length $\epsilon$.

Let $X$ and $Y$ be metric spaces.
For simplicity, whenever we consider a product space with a time interval, like $X\times[0,\epsilon]$, we equip it with the $L_1$ distance.

\begin{dfn}
Two maps $h_0, h_\epsilon:X\to Y$ are said to be \textit{$(C,\epsilon)$-Lipschitz homotopic} if there exists a $C$-Lipschitz map
\[h:X\times[0,\epsilon]\to Y\]
such that $h_i = h(\cdot,i)$ for $i=0,\epsilon$.
The map $h$ is called a \textit{$(C,\epsilon)$-Lipschitz homotopy} between $h_0$ and $h_\epsilon$.

We say that $X$ and $Y$ are \textit{$(C,\epsilon)$-Lipschitz homotopy equivalent} if there exist $C$-Lipschitz maps
\[f:X\to Y,\quad g:Y\to X\]
such that $g\circ f$ and $f\circ g$ are $(C,\epsilon)$-Lipschitz homotopic to the identities $1_X$ and $1_Y$, respectively.
The maps $f$ and $g$ are called \textit{$(C,\epsilon)$-Lipschitz homotopy equivalences} between $X$ and $Y$.
\end{dfn}

Note that if $h$ is a $(C,C'\epsilon)$-Lipschitz homotopy for some $C'\ge 1$, then
\[h'(\cdot,t):=h(\cdot,C't)\]
is a $(CC',\epsilon)$-Lipschitz homotopy defined for $t\in[0,\epsilon]$.
Hence any large constant multiple of the time (independent of $\epsilon$) can be absorbed into the Lipschitz constant.
This basic fact will be used  implicitly throughout the paper.

The following is a special case of Lipschitz homotopy equivalence.

\begin{dfn}
We say that $A\subset X$ is a \textit{$(C,\epsilon)$-Lipschitz strong deformation retract} of $X$ if there exists a $C$-Lipschitz map
\[F:X\times[0,\epsilon]\to X\]
such that
\[F(x,0)=x,\quad F(x,\epsilon)\in A,\quad F(a,t)=a\]
for any $x \in X$, $a \in A$, and $0\le t\le \epsilon$. 
The map $F$ is called a \textit{$(C,\epsilon)$-Lipschitz strong deformation retraction} of $X$ to $A$.

Moreover, if $A$ is a point, we say that $X$ is \textit{$(C,\epsilon)$-Lipschitz strongly contractible} and $F$ is a \textit{$(C,\epsilon)$-Lipschitz strong contraction}.
In this case $A$ is called the \textit{center} of the contraction.
\end{dfn}

We will often omit the word ``strong'': in fact we will not deal with ``weak'' ones (i.e., not fixing $A$) in this paper.
Note that if $X$ is $(C,\epsilon)$-Lipschitz contractible, then the diameter of $X$ is at most $2C\epsilon$.
This fact will also be used frequently throughout the paper.

We need the following relative version when dealing with extremal subsets.
Let $A\subset X$ and $B\subset Y$ be arbitrary subsets.

\begin{dfn}\label{dfn:rel}
Let $h:X \times [0,\epsilon] \to Y$ be a $(C,\epsilon)$-Lipschitz homotopy.
We say that $h$ \textit{respects} $A$ and $B$ if
\[h(A,t)\subset B\]
for any $0\le t\le\epsilon$.

We say that the pairs $(X,A)$ and $(Y,B)$ are {\it $(C,\epsilon)$-Lipschitz homotopy equivalent} if there exist $(C,\epsilon)$-Lipschitz homotopy equivalences
\[f:X\to Y,\quad g:Y\to X\]
such that $f(A)\subset B$ and $g(B)\subset A$ and the $(C,\epsilon)$-Lipschitz homotopies connecting $g\circ f$ and $f\circ g$ to $1_X$ and $1_Y$ respect $A$ and $B$, respectively.
In this case, $f$ and $g$ are said to \textit{respect} $A$ and $B$.
\end{dfn}

\subsection{Gromov--Hausdorff convergence}

We recall the standard terminology for the Gromov--Hausdorff convergence.
Let $X$ and $Y$ be complete metric spaces.

A map $f:X\to Y$ is called an \textit{$\epsilon$-(Gromov--Hausdorff) approximation} if it satisfies
\begin{enumerate}
\item $||f(x)f(x')| - |xx'||<\epsilon$ for all $x,x'\in X$; 
\item for any $y\in Y$, there is $x\in X$ such that $|f(x)y|<\epsilon$.
\end{enumerate}

If $f:X\to Y$ is an $\epsilon$-approximation, we say that $f(p)\in Y$ is a \textit{lift} of $p\in X$.
Similarly, we say that $\dist_{f(p)}$ is a \textit{lift} of $\dist_p$.
In this way, any object on $X$ defined by distance functions can be lifted to $Y$ by replacing $\dist_p$ with $\dist_{f(p)}$ (e.g., a metric ball).

The \textit{Gromov--Hausdorff distance}
\[d_\mathrm{GH}(X,Y)\]
between $X$ and $Y$ is defined as the infimum of $\epsilon>0$ such that there exist $\epsilon$-approximations $f:X\to Y$ and $g:Y\to X$ satisfying that $g\circ f$ and $f\circ g$ are $\epsilon$-close to the identities $1_X$ and $1_Y$, respectively.
Whenever we discuss the Gromov--Hausdorff distance, we will implicitly fix some approximation $f$ and its ``inverse'' $g$ in the above sense.

Let $A\subset X$ and $B\subset Y$ be closed subsets.
If an $\epsilon$-approximation $f:X\to Y$ satisfies
\[f(A)\subset B(B,\epsilon),\quad B\subset B(f(A),\epsilon),\]
then we call $f$ an \textit{$\epsilon$-approximation} from $(X,A)$ to $(Y,B)$, where $B(\cdot,\cdot)$ denote metric neighborhoods.
This defines the convergence of closed subsets under the Gromov--Hausdorff convergence of the ambient spaces, as used in Theorem \ref{thm:ext}.

If a $(C,\epsilon)$-Lipschitz homotopy equivalence $f:X\to Y$ is also an $\epsilon$-approximation, we call it a \textit{$(C,\epsilon)$-Lipschitz homotopy approximation}.
This leads to the definition of the \textit{$C$-Lipschitz homotopy distance}, as discussed in Subsection \ref{sec:main}.

\subsection{Gradient flows}

We briefly review the properties of the gradient flows of semiconcave functions in Alexandrov spaces.
See \cite{Pet:sc}, \cite{AKP} for more details.
In what follows, for simplicity we will assume that every function is defined on the whole space, but this is not essential.

Let $M$ be an Alexandrov space and $f:M\to\mathbb R$.
In case $M$ has no boundary, $f$ is said to be \textit{$\lambda$-concave}, where $\lambda\in\mathbb R$, if its restriction to every unit-speed shortest path $\gamma(t)$ is $\lambda$-concave, i.e.,
\[f\circ\gamma(t)-(\lambda/2)t^2\]
is concave.
In case $M$ has boundary, we require the same condition for the tautological extension of $f$ to the double of $M$.
A \textit{semiconcave} function is a locally $\lambda$-concave function, where $\lambda$ depends on each point.
A \textit{strictly concave} function is a $\lambda$-concave function with $\lambda<0$.

A typical example of a semiconcave function is a distance function $\dist_A$ restricted to $M\setminus A$, where $A\subset M$ is a closed subset.
Note that the concavity of $\dist_A$ at $x\in M\setminus A$ is bounded above by a constant depending only on a lower bound of $|Ax|$ (and the lower curvature bound $-1$; see \cite[2.4]{MY:loc} for instance).

Let $f:M\to\mathbb R$ be a semiconcave function and $p\in M$.
The directional derivative $df=d_pf$ is a $0$-concave function defined on the tangent cone $T_p$ at $p$.
The \textit{gradient} $\nabla_pf\in T_p$ of $f$ at $p$ is an element characterized by the following two properties:
\begin{enumerate}
\item $df(v)\le\langle\nabla_pf,v\rangle$ for any $v\in T_p$;
\item $df(\nabla_pf)=|\nabla_pf|^2$.
\end{enumerate}
Here $|\cdot|$ and $\langle\cdot,\cdot\rangle$ denote the norm and the scalar product on the tangent cone, respectively, defined in the same way as Euclidean plane (see \cite[\S1.3]{Pet:sc}).
The concavity of $df$ guarantees the existence and uniqueness of the gradient.

The \textit{gradient curve} of a semiconcave function $f$ is a curve whose right tangent vector (\cite[\S2.1]{Pet:sc}) is unique and equal to the gradient of $f$.
For any $\lambda$-concave function $f$, the gradient curve $\Phi(x,t)$ starting at $x\in M$ exists and is unique for all $t\ge0$. 
The map
\[\Phi:M\times[0,\infty)\to M\]
is called the \textit{gradient flow} of $f$.

The most important property of the gradient flow for this paper is its Lipschitz continuity.
Recall that we are considering the $L_1$ distance on a product space with a time interval.

\begin{prop}[{\cite[2.1.4]{Pet:sc}}]\label{prop:lip}
Let $f$ be a $\lambda$-concave function on an Alexandrov space $M$ and $\Phi:M\times[0,\infty)\to M$ its gradient flow.
Then
\[\Phi_t:=\Phi(\cdot,t):M\to M\]
is $e^{\lambda t}$-Lipschitz for any $t\ge 0$.
In particular, if $f$ is $C$-Lipschitz, the restriction of $\Phi$ to $M\times[0,T]$ is $\max\{C,e^{\lambda T},1\}$-Lipschitz.
\end{prop}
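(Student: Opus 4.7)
The plan is to prove the spatial Lipschitz estimate via a Gronwall-type argument applied to the distance between two flow lines, and then combine it with a speed bound along individual trajectories to get the joint Lipschitz constant on $M\times[0,T]$.

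First I would fix $x,y\in M$ and set $\alpha(t):=|\Phi_t(x)\,\Phi_t(y)|$. The main task is to show $\alpha'(t)\le\lambda\alpha(t)$ in the sense of upper Dini derivatives, from which $\alpha(t)\le e^{\lambda t}\alpha(0)$ follows by Gronwall. For the derivative, I would apply the first variation formula at both endpoints of a shortest path joining $\Phi_t(x)$ and $\Phi_t(y)$: since the right tangent vector of a gradient curve equals the gradient, and the first variation gives $\tfrac{d^+}{ds}|\gamma(s)\,q|\le-\langle v,\uparrow_p^q\rangle$ for a curve leaving $p$ with velocity $v$, we obtain
\[
\alpha'(t)\le -\langle\nabla_{\Phi_t(x)}f,\uparrow_{\Phi_t(x)}^{\Phi_t(y)}\rangle-\langle\nabla_{\Phi_t(y)}f,\uparrow_{\Phi_t(y)}^{\Phi_t(x)}\rangle.
\]
To turn this into $\alpha'(t)\le\lambda\alpha(t)$, I would use $\lambda$-concavity of $f$ along the shortest path: writing $p=\Phi_t(x)$, $q=\Phi_t(y)$ and comparing $f\circ\gamma(s)-\lambda s^2/2$ (which is concave) with its tangent at each endpoint yields $df(\uparrow_p^q)+df(\uparrow_q^p)\ge-\lambda|pq|$. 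Combined with the defining property $df(v)\le\langle\nabla f,v\rangle$ of the gradient, the right-hand side above is bounded by $\lambda\alpha(t)$.

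The main subtlety, and the step I expect to require the most care, is the rigorous first variation at nonsmooth points and the interchange of derivatives with $\min/\sup$ over shortest paths when $\Uparrow_p^q$ is not a singleton. The standard fix is to work with the upper right derivative and choose the shortest path realizing the extremal angle; the inequalities above still hold for this choice, and Dini-style Gronwall gives the bound. The boundary case is handled by passing to the double, exactly as in the definition of $\lambda$-concavity here.

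Finally, for the $L_1$ Lipschitz estimate on $M\times[0,T]$, I would separate spatial and temporal contributions. The spatial Lipschitz constant of $\Phi_t$ is $\max\{e^{\lambda T},1\}$ by the estimate just proved (taking the worst $t\in[0,T]$, noting $e^{\lambda t}\le 1$ for $\lambda\le 0$). For the temporal part, $C$-Lipschitzness of $f$ forces $|\nabla f|\le C$ pointwise: indeed, $|\nabla_pf|^2=df(\nabla_pf)\le C|\nabla_pf|$. Since a gradient curve has speed $|\nabla f|$, we get $|\Phi_s(x)\,\Phi_t(x)|\le C|s-t|$. Under the $L_1$ product metric the joint Lipschitz constant is the maximum of the two, giving $\max\{C,e^{\lambda T},1\}$ as claimed.
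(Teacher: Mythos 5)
The paper does not reproduce a proof of this proposition; it simply cites \cite[2.1.4]{Pet:sc}. Your argument is the standard one from that reference: the Gronwall estimate $\frac{d^+}{dt}|\Phi_t(x)\Phi_t(y)|\le\lambda|\Phi_t(x)\Phi_t(y)|$ via the first variation inequality, the endpoint $\lambda$-concavity identity $df(\uparrow_p^q)+df(\uparrow_q^p)\ge-\lambda|pq|$, and the gradient inequality $df(v)\le\langle\nabla f,v\rangle$; and then $|\nabla f|\le C$ from the Lipschitz bound for the temporal part, combined by the triangle inequality under the $L_1$ product metric. The proof is correct and follows essentially the same route as the cited source, including the care you flag about upper Dini derivatives and the choice of direction in $\Uparrow_p^q$.
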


We conclude this section with two basic lemmas regarding the gradient, which will be used frequently later.
The first is a gradient estimate for a strictly concave function.

\begin{lem}\label{lem:grad}
Let $f$ be a $(-\lambda)$-concave function on an Alexandrov space $M$, where $\lambda>0$.
If $p\in M$ is the (unique) maximum point of $f$, then for any $x\in M\setminus\{p\}$, we have
\[|\nabla_xf|\ge df(\uparrow_x^p)\ge\lambda|px|/2.\]
\end{lem}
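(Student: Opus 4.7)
The plan is to prove the two inequalities separately, each by a very short argument using only the definition of the gradient and the one-variable characterization of concavity along shortest paths.

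For the first inequality $|\nabla_xf|\ge df(\uparrow_x^p)$, I would apply property (1) of the gradient to the unit vector $\xi=\uparrow_x^p\in\Sigma_x\subset T_x$ and then Cauchy--Schwarz:
\[df(\uparrow_x^p)\le\langle\nabla_xf,\uparrow_x^p\rangle\le|\nabla_xf|\cdot|\uparrow_x^p|=|\nabla_xf|.\]
Nothing else is needed here; the direction $\uparrow_x^p$ is well defined because $x\ne p$.

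For the second inequality, let $\gamma:[0,L]\to M$ be a unit-speed shortest path from $x$ to $p$, where $L=|px|$, so that $\gamma'(0^+)=\uparrow_x^p$. By the definition of $(-\lambda)$-concavity, the function
\[\phi(t):=f(\gamma(t))+\tfrac{\lambda}{2}t^2\]
is concave on $[0,L]$ (in the presence of boundary this is applied to the tautological extension to the double, which does not affect the argument since shortest paths lift). Concavity gives $\phi(L)\le\phi(0)+\phi'(0^+)\,L$, which rewrites as
\[f(p)+\tfrac{\lambda}{2}L^{2}\le f(x)+df(\uparrow_x^p)\,L,\]
since the $t^2$-term contributes $0$ to the right-derivative at $0$ and $\phi'(0^+)=(f\circ\gamma)'(0^+)=df(\uparrow_x^p)$. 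Because $p$ is the maximum of $f$, we have $f(p)-f(x)\ge 0$, hence
\[df(\uparrow_x^p)\ge\tfrac{\lambda}{2}L=\tfrac{\lambda}{2}|px|,\]
as required.

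There is no real obstacle: both inequalities are immediate from the definitions, and the uniqueness of the maximum $p$ is actually not needed for the chain of inequalities itself (only $f(p)\ge f(x)$ is used). The uniqueness of $p$ presumably guarantees that $\uparrow_x^p$ is meaningful for every $x\ne p$ and that the gradient estimate is nontrivial. The mild subtlety of passing to the double of $M$ when $M$ has boundary is handled exactly as in the definition of semiconcavity recalled just before the lemma.
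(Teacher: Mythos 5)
Your proposal is correct and follows essentially the same route as the paper: the second inequality is exactly the paper's displayed estimate $df(\uparrow_x^p)\ge\bigl(f(p)-f(x)+\lambda|px|^2/2\bigr)/|px|$ obtained from the one-variable $(-\lambda)$-concavity along a shortest path together with $f(p)\ge f(x)$, and the first inequality is the standard consequence of property (1) of the gradient plus Cauchy--Schwarz, which the paper leaves implicit. Your side remark that uniqueness of $p$ is not used in the chain of inequalities is also accurate.
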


\begin{proof}
The first inequality follows from the property (1) of the gradient and the second follows follows from the $(-\lambda)$-concavity,
\[df(\uparrow_x^p)\ge\frac{f(p)-f(x)+\lambda|px|^2/2}{|px|},\]
where $f(p)\ge f(x)$.
\end{proof}

The second is about the following regularity introduced in \cite{MY:stab}.
For $c>0$ and a subset $A\subset M$, we say that a function $f$ on $M$ is \textit{$c$-regular (outside $A$) in the direction to $A$} if
\[df(\uparrow_x^A)>c\]
for any $x\in M\setminus A$ and some shortest path from $x$ to $A$.

\begin{lem}[{\cite[2.3]{MY:stab}}]\label{lem:reg}
Let $f$ be a semiconcave function on an Alexandrov space $M$ that is $c$-regular in the direction to $A\subset M$.
Then the gradient flow of $f$ outside $A$ monotonically decreases the distance to $A$ with velocity at least $c$.
\end{lem}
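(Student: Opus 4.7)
The plan is to differentiate $t \mapsto \dist_A(\Phi_t(x))$ along the gradient flow and combine the first variation formula for distance functions with the defining inequality of the gradient.

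First, recall the first variation formula: for $x \in M \setminus A$ and any tangent vector $w \in T_x$,
\[d_x\dist_A(w) = -\max_{\xi \in \Uparrow_x^A}\langle w,\xi\rangle.\]
Applying this to $w = \nabla_x f$ and using the chain rule for gradient curves (the right tangent vector of $\Phi_t(x)$ is $\nabla_{\Phi_t(x)} f$), we get
\[\frac{d^+}{dt}\dist_A(\Phi_t(x)) = d_{\Phi_t(x)}\dist_A(\nabla f) = -\max_{\xi \in \Uparrow_{\Phi_t(x)}^A}\langle \nabla f,\xi\rangle,\]
as long as $\Phi_t(x) \notin A$.

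Next, I would use the characterizing inequality (1) of the gradient, $df(\xi) \le \langle \nabla f,\xi\rangle$, together with the $c$-regularity hypothesis. The latter guarantees the existence of some $\xi_0 \in \Uparrow_{\Phi_t(x)}^A$ with $df(\xi_0) > c$. Combining these,
\[\max_{\xi \in \Uparrow_{\Phi_t(x)}^A}\langle \nabla f,\xi\rangle \ge \langle \nabla f,\xi_0\rangle \ge df(\xi_0) > c,\]
so the right derivative of $\dist_A \circ \Phi_t$ is strictly less than $-c$ whenever $\Phi_t(x) \notin A$. Monotonicity with velocity $\ge c$ then follows by integrating (the map $t \mapsto \dist_A(\Phi_t(x))$ is absolutely continuous since $\dist_A$ is $1$-Lipschitz and $\Phi_t$ is Lipschitz by Proposition \ref{prop:lip}).

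The only subtle point is to justify the chain rule identity $\frac{d^+}{dt}(g \circ \Phi_t) = d_{\Phi_t(x)}g(\nabla f)$ for a semiconcave function $g = \dist_A$ along a gradient curve; this is a standard fact in Alexandrov gradient flow theory (cf.\ \cite{Pet:sc}), but I would cite it explicitly rather than reprove it. Everything else is a short manipulation of the gradient characterization.
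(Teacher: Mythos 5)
Your proof is correct and takes essentially the same route as the paper: both compute the derivative of $\dist_A$ along the gradient curve via the first variation formula and then apply the defining inequality $df(\xi)\le\langle\nabla f,\xi\rangle$ together with the $c$-regularity hypothesis. Your version is simply a bit more explicit about the $\max$ over $\Uparrow^A$ and about the integration/absolute-continuity step, which the paper leaves implicit.
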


\begin{proof}
This follows from the first variation formula and the property (1) of the gradient.
Indeed, if $\alpha(t)$ is a gradient curve of $f$ contained in $M\setminus A$, we have
\[\frac d{dt}|A\alpha(t)|=-\langle\nabla_{\alpha(t)}f,\Uparrow_{\alpha(t)}^A\rangle\le-df(\uparrow_{\alpha(t)}^A)<-c,\]
as required.
\end{proof}

\section{Lipschitz contractible convex hulls}\label{sec:conv}

In this section we prove the following key theorem.
Fixing $n$, $D$, and $v$, we denote by $\mathcal A(n,D,v)$ the class of Alexandrov spaces with dimension $n$, curvature $\ge-1$, diameter $\le D$, and volume $\ge v$.

\begin{thm}[cf.\ {\cite[4.3]{PP:ext}}]\label{thm:conv}
There is $C=C(n,D,v)>0$ satisfying the following:
for any $M \in\mathcal A(n,D,v)$, $p \in M$, and $0<\epsilon<C^{-1}$, there exists a $(C,\epsilon)$-Lipschitz strongly contractible convex domain $U$ containing $B(p,\epsilon)$.

More precisely, the following hold:
\begin{enumerate}
\item there exists a $1$-Lipschitz $(-(C\epsilon)^{-1})$-concave function $h$ defined on a neighborhood of $p$ such that for any $0\le t\le C^{-1}$,
\[U(t):=\{h>-t\epsilon\}\]
is the desired convex domain $U$;
\item $U(t)$ also contains the $(C^{-1}\epsilon)$-ball around its peak $q$, i.e., the unique maximum point of $h$;
\item for any $M'\in\mathcal A(n,D,v)$ that is $(C^{-1}\epsilon)$-close to $M$ with respect to the Gromov--Hausdorff distance, there is a lift $h'$ of $h$ to $M'$ that satisfies the same properties as (1) and (2) with $p$ replaced by its lift $p'\in M'$.
\end{enumerate}
\end{thm}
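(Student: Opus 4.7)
The plan is to combine the Perelman--Petrunin construction of strictly concave functions \cite[4.3]{PP:ext}, made quantitative via non-collapsing, with the uniform Lipschitz contractibility of small balls from \cite[1.2]{MY:stab}. The concave function will carve out the convex superlevel sets $U(t)$, its gradient flow will contract them exponentially toward the peak $q$, and the small-ball result will finish the contraction to a point in bounded time.

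First I would produce the function $h$. Following \cite[4.3]{PP:ext}, average a fixed $1$-Lipschitz strictly concave reparametrization $\phi$ of the distance functions $\dist_z$ over a sufficiently dense finite set $S \subset S(p, r_0)$ with $r_0 \sim \epsilon$:
\[ h(x) := \frac{1}{|S|}\sum_{z \in S} \phi(|zx|). \]
With $\phi$ taken to be $(-c/r_0)$-concave on the relevant range, one obtains a $1$-Lipschitz and $(-(C\epsilon)^{-1})$-concave function $h$ with a unique maximum at a point $q$ near $p$ and $h(p)$ of order $\epsilon$. Uniformity of $C = C(n,D,v)$ follows from a compactness argument: the density of $S$ needed to force strict concavity depends only on the metric geometry of $\Sigma_p$, which is uniformly non-degenerate across $\mathcal A(n,D,v)$ by Gromov precompactness. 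The inclusions $B(p,\epsilon) \subset U(t)$, $B(q, C^{-1}\epsilon) \subset U(t)$ and $U(t) \subset B(q, C\epsilon)$ for $t \in [0, C^{-1}]$ then follow respectively from the $1$-Lipschitz bound, a direct estimate using $h(q) \gtrsim \epsilon$, and the quadratic concavity bound $h(q) - h(x) \ge |qx|^2/(2C\epsilon)$; convexity of $U(t)$ is automatic.

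Second, I would build the $(C,\epsilon)$-Lipschitz strong contraction of each $U(t)$ to $q$ in two phases. In Phase~1, run the gradient flow $\Phi$ of $h$: by Proposition~\ref{prop:lip} each $\Phi_s$ is $1$-Lipschitz since the concavity constant is negative, and by Lemmas~\ref{lem:grad} and~\ref{lem:reg} applied to $h$ around its maximum, the distance to $q$ decays exponentially, $|q\Phi_s(x)| \le e^{-s/(2C\epsilon)}|qx|$. Running the flow for time $T_1 = O(\epsilon)$ thus maps $U(t) \subset B(q, C\epsilon)$ into a small ball $B(q,\rho)$ with $\rho$ much smaller than the uniform contraction radius guaranteed by \cite[1.2]{MY:stab}. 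In Phase~2, apply the Lipschitz contraction of $B(q,\rho)$ to $q$ provided by \cite[1.2]{MY:stab}, concatenate with Phase~1, and rescale the time axis so that the total length is exactly $\epsilon$; the rescaling contributes only a bounded factor to the overall Lipschitz constant, which is absorbed into $C$.

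For the lift claim (3), since $h$ is an explicit combination of distance functions composed with a fixed reparametrization $\phi$, defining $h'(x') := |S|^{-1}\sum_z \phi(|z'x'|)$ for any lifts $z' \in M'$ of the points $z \in S$ produces a function whose Lipschitz and concavity constants differ from those of $h$ by an error of order $d_\mathrm{GH}(M,M')$, readily absorbed by the built-in margins when $d_\mathrm{GH}(M,M') < C^{-1}\epsilon$; the same Phase 1--Phase 2 contraction then applies to $h'$ via stability of \cite[1.2]{MY:stab}. The main obstacle throughout is maintaining uniform constants depending only on $(n,D,v)$: the density of $S$ required for strict concavity, the radius and Lipschitz constant in the small-ball contraction, and the time reparametrization in Phase~2 all rely essentially on the non-collapsing hypothesis through Gromov precompactness of $\mathcal A(n,D,v)$ and the non-degeneracy of tangent cones.
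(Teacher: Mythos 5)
Your overall architecture matches the paper's: construct a uniformly strictly concave, $1$-Lipschitz function via Perelman--Petrunin averaging of reparametrized distance functions, use its superlevel sets as the convex domains, push them into a tiny ball by the gradient flow of the function, and then finish by a small-ball contraction. The convexity, inclusion, and exponential decay estimates in Phase~1 are all fine, as is the observation that the lifts $h'$ inherit the same constants up to errors of order $d_\mathrm{GH}(M,M')$.

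The gap is in Phase~2. You write that you will ``apply the Lipschitz contraction of $B(q,\rho)$ to $q$ provided by \cite[1.2]{MY:stab},'' implicitly treating that reference as asserting a uniform $(C,\rho)$-Lipschitz strong contractibility of $B(q,\rho)$ \emph{to its center $q$} across $\mathcal A(n,D,v)$. But \cite[1.2]{MY:stab} is a \emph{stability} theorem: given a convergent sequence of spaces and a strongly Lipschitz contractible ball in the limit, it produces a Lipschitz contraction of nearby balls to \emph{some nearby point}, not to the prescribed center. In fact, for the peak $q_i$ of the lifted function $\tilde h_i$, the ball $\tilde B(q_i,r)$ need not be Lipschitz strongly contractible to $q_i$ at all (the regularity of $\dist_{\tilde S(q_i,2r)}$ toward $q_i$ can fail); one must instead locate a shifted center $\hat q_i$, constructed as the maximum of another averaged distance function, for which the regularity does hold uniformly (this is the content of the paper's Step~3 and Claim~3.5). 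Moreover, once the center is shifted to $\hat q_i$, your Phase~1 flow — the gradient flow of $h$ — fixes $q$ (or $q_i$) but not $\hat q_i$, so the naive concatenation of the two phases is not a \emph{strong} contraction (it fails to fix the center throughout). The paper handles this by multiplying the Phase~1 flow by a Lipschitz cutoff so that it freezes a small ball around $q_i$; for large $i$ this neighborhood contains $\hat q_i$, and only then does the composition fix $\hat q_i$. Your proposal needs both of these ingredients — the shift of center and the compatible modification of the Phase~1 flow — to close the argument; as written it silently assumes the ball is contractible to $q$ itself, which is exactly what fails in general and what the remark after the theorem statement ("the center of the contraction is not necessarily $p$ or $q$") warns about.

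Two smaller remarks. First, since your Phase~1 flow converges to $q$ only exponentially and never reaches it, you cannot dispense with Phase~2, so the gap above is not cosmetic. Second, working directly at scale $\epsilon$ in $M$ (averaging $\phi\circ\dist_z$ for $z\in S(p,r_0)$, $r_0\sim\epsilon$) rather than rescaling to a nonnegatively curved limit and using Busemann functions, as the paper does, is legitimate but makes the uniform concavity estimate less transparent: you need $\phi$ to be simultaneously $1$-Lipschitz, have derivative uniformly bounded below, and be $(-c/\epsilon)$-concave on the relevant range, and the volume-comparison step forcing enough of the $d\dist_z$ to be bounded away from $0$ must be carried out at scale $\epsilon$ with constants depending only on $(n,D,v)$. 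The paper's rescale-and-pass-to-a-limit argument via contradiction encapsulates this quantifier management; a direct argument should spell out the same Bishop--Gromov step explicitly rather than appealing loosely to ``uniform non-degeneracy of $\Sigma_p$,'' which is not literally uniform over the class.
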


\begin{rem}
The center of the Lipschitz contraction of $U$ (i.e., the point to which $U$ contracts) is not necessarily $p$ or $q$.
\end{rem}

\begin{rem}\label{rem:diam}
The $(C,\epsilon)$-Lipschitz contractibility of $U$ implies that the diameter of $U$ is at most $2C\epsilon$.
The existence of such a convex hull of uniformly bounded diameter was proved by Perelman and Petrunin \cite[4.3]{PP:ext} (cf.\ \cite[7.1.3]{Pet:sc}).
The above theorem is its refinement.
\end{rem}

The detailed statements (1)--(3) in the second half will be used in the next section to construct a quantitative version of a good covering in Theorem \ref{thm:good}, which is the key ingredient in the first proof of Theorem \ref{thm:main} in Section \ref{sec:main1}.
For the second proof in Section \ref{sec:main2}, the simpler statement in the first half is sufficient.

The proof of Theorem \ref{thm:conv} is a combination of the two techniques from \cite[4.3]{PP:ext} and \cite[1.2]{MY:stab} (see also \cite{K:reg}).
The construction of the strictly concave function $h$ is due to Perelman and Petrunin \cite[4.3]{PP:ext}.
We use the gradient flow of $h$ to contract $U$ to a tiny ball, while we need the stability of contractible balls as in \cite[1.2]{MY:stab} to crush this tiny ball.
Thus the first half of the following proof is the same as \cite[4.3]{PP:ext} and the second half is the same as \cite[1.2]{MY:stab}.
For the convenience of the reader, we include the complete proof.

\begin{rem}\label{rem:erratum}
There is an error in the proof of 
\cite[1.2]{MY:stab}. However,
the statement of \cite[1.2]{MY:stab} holds true 
as stated there (see the erratum in \cite{MY:stab}).
It should be emphasized that we do not directly use \cite[1.2]{MY:stab} in the present paper. 
\end{rem}

\begin{proof}[Proof of Theorem \ref{thm:conv}]
We prove it by contradiction.
Suppose there exist $C_i\to\infty$, $0<\epsilon_i<C_i^{-1}$, $M_i\in\mathcal A(n,D,v)$, and $p_i\in M_i$ such that the statement does not hold, i.e., there is no $(C_i,\epsilon_i)$-Lipschitz strongly contractible convex domain containing $B(p_i,\epsilon_i)$ with the desired properties.
Instead of $M_i$, we will consider $\tilde M_i:=\epsilon_i^{-1}M_i$, and will write metric balls and metric spheres in $\tilde M_i$ as $\tilde B(\cdot,\cdot)$ and $\tilde S(\cdot,\cdot)$, respectively.

For some subsequence (denoted by the same $i$) and some constant $C>0$ independent of $i$, we will construct a function $\tilde h_i$ defined on a large neighborhood of $p_i$ in $\tilde M_i$ with the following properties:
\begin{enumerate}
\item[($\tilde 1$)] $\tilde h_i$ is a $1$-Lipschitz $(-C^{-1})$-concave function such that for any $0\le t\le C^{-1}$, the superlevel set $\tilde U_i(t):=\{\tilde h_i>-t\}$ is a $(C,1)$-Lipschitz contractible domain containing $\tilde B(p_i,1)$;
\item[($\tilde 2$)] $\tilde U_i(t)$ also contains the $C^{-1}$-ball around its peak $q_i$;
\item[($\tilde 3$)] for any $M_i'\in\mathcal A(n,D,v)$ such that $\tilde M_i':=\epsilon_i^{-1}M_i'$ is $C_i^{-1}$-close to $\tilde M_i$, there is a lift $\tilde h_i'$ of $\tilde h_i$ to $\tilde M_i'$ that satisfies the same properties as ($\tilde 1$) and ($\tilde 2$) with $p_i$ replaced by its lift $p_i'\in\tilde M_i'$.
\end{enumerate}
Clearly this contradicts the assumption at the beginning of the proof (consider $h_i:=\epsilon_i\tilde h_i$ on $M_i$).

Since $\epsilon_i\to 0$, we may assume that $(\tilde M_i,p_i)$ converges to a nonnegatively curved Alexandrov space $(N,p)$ of dimension $n$ in the pointed Gromov--Hausdorff topology.
Moreover, the existence of a uniform lower volume bound $v$ for $M_i$ implies that the ideal boundary $N(\infty)$ of $N$ has dimension $n-1$ (see \cite{Shi} for the ideal boundary).
Indeed, by the Bishop--Gromov inequality (\cite[\S6]{Y:conv}), we have
\[\frac{\vol_n(\tilde B(p_i,R))}{R^n}=\frac{\vol_n (B(p_i,\epsilon_iR))}{\epsilon_i^nR^n}\ge\frac{\vol_n(B(p_i,D))}{D^n}\ge\frac v{D^n}\]
for any $i\gg R\gg 1$, where $\vol_n$ denotes the $n$-dimensional Hausdorff measure.
Taking $i\to\infty$, this implies that the volume ratio $\vol_n(B(p,R))/R^n$ of $N$ is uniformly bounded below independent of $R\gg 1$ (\cite[10.8]{BGP}).
Taking $R\to\infty$, we see that the asymptotic cone of $N$ has dimension $n$, and thus $N(\infty)$ has dimension $n-1$ (\cite[1.1]{Shi}).

In what follows, we first prove the statements for the limit space $N$ that correspond to the conditions ($\tilde 1$) and ($\tilde 2$) and then show that the construction can be lifted to $\tilde M_i$.
In particular, the condition ($\tilde 3$) is obvious since $\tilde M_i'$ also converges to the same $N$.

\step{Step 1}
This part is the same as the proof of \cite[4.3]{PP:ext}.
We construct a strictly concave function $h$ defined on an arbitrarily large neighborhood of $p\in N$, which can be lifted to a function $\tilde h_i$ on $\tilde M_i$ with the same concavity.
We show that a superlevel set of $h$ is a convex domain containing the unit ball around $p$ and the same holds for $\tilde h_i$.

Take a maximal $\delta$-discrete set $\{l_\alpha\}_{\alpha\in A}$ in $N(\infty)$, where $\delta>0$ will be determined in Claim \ref{clm:conc}.
For each $l_\alpha$ there exists a corresponding ray $r_\alpha$ emanating from $p$, which yields the Busemann function $b_\alpha$, i.e.,
\[b_\alpha:=\lim_{t\to\infty}|r_\alpha(t)\cdot|-t.\]

Choose $R\gg1$ large enough such that $R^{-1}S(p,R)$ is sufficiently close to $N(\infty)$ in the Gromov--Hausdorff distance, so that the following holds: for any $x\in S(p,R)$, there exists $A_x\subset A$ such that
\begin{equation}\label{eq:a}
\# A_x\ge c/\delta^{n-1}
\end{equation}
for some $c>0$ independent of $x$, and
\begin{equation}\label{eq:b}
b_\alpha(p)\ge b_\alpha(x)+R/2
\end{equation}
for any $\alpha\in A_x$.
Indeed, to get the second inequality \eqref{eq:b}, it suffices to choose $A_x$ so that the subset $\{l_\alpha\}_{\alpha\in A_x}\subset N(\infty)$ is contained in the $\pi/4$-neighborhood of the image of $x$ under the Gromov--Hausdorff approximation $R^{-1}S(p,R)\approx N(\infty)$.
The first inequality \eqref{eq:a} then follows from the fact that $\dim N(\infty)=n-1$ and the Bishop--Gromov inequality.
In particular, $c$ depends only on $n$, $D$, and $v$.

Suppose $\hat R\gg R$, which will be determined by the next sentence and the proof of Claim \ref{clm:conc}.
Setting $q_\alpha:=r_\alpha(\hat R)$ and choosing $\hat R$ large enough, we may assume that for any $\alpha\in A_x$, the following inequality corresponding to \eqref{eq:b} holds:
\[|q_\alpha p|\ge|q_\alpha x|+R/3.\]
We then define a function $h_x$ on $B(p,2R)$ by
\[h_x:=\frac1{\# A_x}\sum_{\alpha\in A_x}\phi(|q_\alpha\cdot|),\]
where $\phi$ is a $C^2$ function such that $1/2\le\phi'\le 1$ and $\phi''=-(8R)^{-1}$ on $[\hat R-2R,\hat R+2R]$.

Clearly $h_x$ is $1$-Lipschitz, $h_x(p)\ge h_x(x)+R/6$, and $h_x(p)=h_{x'}(p)$ for any $x\neq x'\in S(p,R)$.
Moreover,

\begin{clm}[{\cite[3.6]{Per:mor}}]\label{clm:conc}
$h_x$ is $(-\lambda)$-concave on $B(p,2R)$ for some $\lambda>0$ depending only on $c$ and $R$, provided $\delta$ is small enough and $\hat R$ is large enough.
\end{clm}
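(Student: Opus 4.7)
My plan is to bound $(h_x\circ\gamma)''$ from above by a negative constant along any unit-speed geodesic $\gamma$ contained in $B(p,2R)$. Since $h_x$ is the arithmetic mean of $\phi\circ|q_\alpha\cdot|$ over $\alpha\in A_x$, I would first analyze each term and then average.

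For fixed $\alpha$, write $r_\alpha(t):=|q_\alpha\gamma(t)|$ and $\theta_\alpha(t):=\angle(\uparrow_{\gamma(t)}^{q_\alpha},\gamma'(t))$. Since $N$ has nonnegative curvature, Hessian comparison for the distance function (used in the second-difference-quotient sense) gives $r_\alpha''\le(1-\cos^2\theta_\alpha)/r_\alpha$ along $\gamma$. Combined with the formal chain rule this yields
\[
(\phi\circ r_\alpha)''\le\phi''(r_\alpha)\cos^2\theta_\alpha+\phi'(r_\alpha)\frac{1-\cos^2\theta_\alpha}{r_\alpha}\le-\frac{\cos^2\theta_\alpha}{8R}+\frac{1-\cos^2\theta_\alpha}{\hat R-2R},
\]
using $\phi''=-(8R)^{-1}$, $\phi'\le1$, and $r_\alpha\ge\hat R-2R$.

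Averaging over $\alpha\in A_x$, the task reduces to showing that the mean of $\cos^2\theta_\alpha$ is bounded below by a constant depending only on $c$ and $n$, uniformly in $\gamma'(t)\in\Sigma_{\gamma(t)}$. The directions $\{l_\alpha\}_{\alpha\in A}$ form a $\delta$-discrete set in the $(n-1)$-dimensional boundary $N(\infty)$ and $\#A_x\ge c/\delta^{n-1}$. Because $\hat R\gg R$, Toponogov applied to the thin triangle $p\,q_\alpha\,\gamma(t)$ shows that each $\uparrow_{\gamma(t)}^{q_\alpha}$ is nearly parallel in $\Sigma_{\gamma(t)}$ to the asymptotic direction $l_\alpha$; hence $\{\uparrow_{\gamma(t)}^{q_\alpha}\}_{\alpha\in A_x}$ is, up to small distortion, a $\delta$-net in a region of definite $(n-1)$-measure in $\Sigma_{\gamma(t)}$. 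For any unit vector $v=\gamma'(t)$, the directions within angle $\epsilon_0$ of the equator $\{v\}^\perp$ fill a band of $(n-1)$-measure $\sim\epsilon_0$, hence contain at most $\sim\epsilon_0/\delta^{n-1}$ elements of $A_x$; choosing $\epsilon_0$ small compared to $c$ leaves a definite fraction of $A_x$ with $\cos^2\theta_\alpha\ge\epsilon_0^2$, giving the desired lower bound on the mean.

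I expect the main obstacle to be making the density comparison rigorous in the Alexandrov setting, and in particular tracking how the constants in ``$\delta$-discrete in $N(\infty)$ $\Rightarrow$ $\delta$-net in $\Sigma_{\gamma(t)}$'' depend on the perspective shift from $p$ to a general $\gamma(t)\in B(p,2R)$. Once this is handled, the quantifiers fall in the natural order: first fix $\hat R$ large (depending on $R,c,n$) so that $(\hat R-2R)^{-1}$ is dominated by $\epsilon_0^2/(16R)$, then shrink $\delta$ to run the density argument, and finally conclude $(h_x\circ\gamma)''\le-\lambda$ for some $\lambda=\lambda(c,R)>0$, as required.
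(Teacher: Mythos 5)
Your proposal is correct and follows essentially the same route as the paper: the chain rule for $(\phi\circ\dist_{q_\alpha})''$, the nonnegative-curvature Hessian bound $r_\alpha''\le(1-\cos^2\theta_\alpha)/r_\alpha$ (which is the paper's ``$\dist_{q_\alpha}$ is almost $0$-concave since $\hat R\gg R$''), and the volume-comparison/density argument using $\#A_x\ge c/\delta^{n-1}$ to show that a definite fraction of the $\theta_\alpha$ avoid a small band around the equator $\{v\}^\perp$, hence the average of $\cos^2\theta_\alpha$ is bounded below. Your version just makes the paper's terse outline more explicit (noting $|d\dist_{q_\alpha}(\xi)|=|\cos\theta_\alpha|$ and spelling out the order of quantifiers for $\hat R$ and $\delta$), which is fine; there is no gap.
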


\begin{proof}
Let $\phi$ be as above.
If $f$ is a $C^2$ function on $\mathbb R$, we have
\[(\phi\circ f)''=\phi''\circ f\cdot(f')^2+\phi'\circ f\cdot f''.\]
Hence
\begin{enumerate}
\item[(i)] if $|f(0)-\hat R|\le2R$, then $(\phi\circ f)''(0)\le f''(0)$;
\item [(ii)] if in addition $|f'(0)|\ge a>0$ and $f''(0)\le a^2/16R$, then $(\phi\circ f)''(0)\le-a^2/16R$.
\end{enumerate}
These estimates can be justified even if $f$ is a semiconcave function that is not $C^2$, by regarding $f''\le\lambda$ as meaning that $f$ is $\lambda$-concave.

Let $c'\ll c$ be a small positive constant that will be determined later, where $c$ is a constant from \eqref{eq:a}.
Let $\gamma(t)$ be an arbitrary unit-speed shortest path with $\gamma(0)\in B(p,2R)$.
We denote by $A_x'\subset A_x$ the set of $\alpha\in A_x$ such that
\[||\gamma'(0)\Uparrow_{\gamma(0)}^{q_\alpha}|-\pi/2|\le c'.\]
Choosing $\hat R\gg R$, we may assume that $\{\uparrow_{\gamma(0)}^{q_\alpha}\}_{\alpha\in A}$ is $\delta/2$-discrete.
Then a standard comparison argument shows
\begin{equation}\label{eq:c}
\#A_x'\le C_nc'/\delta^{n-1},
\end{equation}
provided $\delta\ll c'$, where $C_n$ is a constant depending only on $n$.

Choose $R'\gg R$ large enough so that $|q_\alpha\cdot|$ is $(\sin^2c'/16R)$-concave on $B(p,2R)$ (recall that $N$ has nonnegative curvature).
Then for any $\alpha\in A_x$, the estimate (i) implies
\[\phi(|q_\alpha\gamma|)''(0)\le\sin^2 c'/16R.\]
Furthermore, for any $\alpha\in A_x\setminus A_x'$, the estimate (ii) implies
\[\phi(|q_\alpha\gamma|)''(0)\le-\sin^2 c'/16R.\]
By \eqref{eq:a} and \eqref{eq:c}, assuming $c'\ll c$, we have $\#A_x'\le\#A_x/10$.
Taking the average of $\phi(|q_\alpha\gamma|)''(0)$, we obtain $(h_x\circ\gamma)''(0)\le-\sin^2 c'/100R^2$, as desired.
\end{proof}

Let $h$ be the infimum of $h_x$ over all $x\in S(p,R)$, which is also $1$-Lipschitz and $(-\lambda)$-concave.
Furthermore, $h(p)\ge\sup_{x\in S(p,R)}h(x)+R/6$.
Set
\[U(t):=h^{-1}(a+t,\infty)\cap B(p,R),\quad a:=\sup_{x\in S(p,R)}h(x)\]
for each $0\le t\le R/10$.
Then $U(t)$ is convex.
Moreover, since $h$ is $1$-Lipschitz and $R\gg 1$, it follows that $U(t)$ contains $B(p,1)$.
Note that $U(t)$ also contains a small ball around its peak $q$, i.e., the unique maximum point of $h$.

Finally, we lift the situation to $\tilde M_i$.
Let $\tilde h_i$ and $\tilde U_i(t)$ denote the lifts of $h$ and $U(t)$, respectively, defined by the same formulas with $q_\alpha$ replaced by its lift.
Then $\tilde h_i$ is $1$-Lipschitz and $(-\lambda)$-concave on $\tilde B(p_i,2R)$: the latter follows from the same argument using volume comparison as in Claim \ref{clm:conc} (note $\dim\tilde M_i=n$).
Therefore $\tilde U_i(t)$ is a convex domain containing $\tilde B(p_i,1)$.
This shows the condition ($\tilde 1$) except for the Lipschitz contractibility.
Since the unique maximum point $q_i$ of $\tilde h_i$ converges to $q$, the condition ($\tilde 2$) is clear.

From now on we fix $t$ and denote $U:=U(t)$ and $\tilde U_i:=\tilde U_i(t)$.
We are going to prove the uniform Lipschitz contractibility of $\tilde U_i$ as claimed in the condition ($\tilde 1$).

\step{Step 2}
Here we first show that $U$ can be contracted to a tiny ball $B$ centered at $q$ by the gradient flow of $h$ and this $B$ can be crushed to $q$ by another gradient flow.
We then check that $\tilde U_i$ can be contracted into the corresponding tiny ball $\tilde B_i$ centered at $q_i$ in the same manner.
However, crushing $\tilde B_i$ requires a more careful argument, which will be deferred to Step 3.

As above, $q\in U$ denotes the unique maximum point of $h$.
For any $x\in U\setminus\{q\}$, by Lemma \ref{lem:grad} we have
\[dh(\uparrow_x^q)\ge\lambda|qx|/2.\]
In particular, for any fixed $r>0$, the function $h$ is $(\lambda r/4)$-regular outside $B(q,r/2)$ in the direction to $q$ (see the paragraph before Lemma \ref{lem:reg} for the terminology).
Hence Lemma \ref{lem:reg} implies that the gradient flow of $h$ pushes $U$ into $B(q,r/2)$ within time at most $8R/\lambda r$.
Since $h$ is $1$-Lipschitz and $0$-concave, this flow is $1$-Lipschitz (Proposition \ref{prop:lip}).
In conclusion,

\begin{clm}\label{clm:flow1}
The gradient flow $\Phi$ of $h$ gives a $(1,8R/\lambda r)$-Lipschitz homotopy pushing $U$ into $B(q,r/2)$.
\end{clm}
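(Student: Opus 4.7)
The plan is to take $\Phi$ to be the gradient flow of $h$ itself, and to read off both the time bound and the Lipschitz constant from the lemmas already in hand.

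For the time bound I would chain Lemmas \ref{lem:grad} and \ref{lem:reg}. Since $h$ is $(-\lambda)$-concave and attains its unique maximum at $q$, Lemma \ref{lem:grad} yields $dh(\uparrow_x^q)\ge\lambda|qx|/2$ for every $x\in U\setminus\{q\}$. Restricting to $x\in U\setminus B(q,r/2)$ this becomes $dh(\uparrow_x^q)\ge\lambda r/4$, so that $h$ is $(\lambda r/4)$-regular in the direction to $q$ outside $B(q,r/2)$. Lemma \ref{lem:reg} then forces $t\mapsto|q\Phi(x,t)|$ to decrease at speed at least $\lambda r/4$ until the trajectory enters $B(q,r/2)$. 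Since $U\subset B(p,R)$ and $q\in U$, the initial distance $|qx|$ is at most $2R$, so the trajectory has entered $B(q,r/2)$ no later than time $T=8R/(\lambda r)$.

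For the Lipschitz constant I would apply Proposition \ref{prop:lip} to $h$, which is $1$-Lipschitz and $(-\lambda)$-concave with $\lambda>0$. The ``in particular'' clause then gives that the restriction of $\Phi$ to $M\times[0,T]$ is $\max\{1,e^{-\lambda T},1\}=1$-Lipschitz. Combining with the previous paragraph, $\Phi|_{U\times[0,T]}$ is a $(1,8R/(\lambda r))$-Lipschitz homotopy pushing $U$ into $B(q,r/2)$, which is exactly the conclusion.

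The only step where I would pause is verifying that the gradient flow stays inside the region where the concavity and the regularity estimate for $h$ are valid. This is not a real obstacle: $h$ is nondecreasing along its gradient curves, while $h\le a$ on $S(p,R)$ and $h>a+t\ge a$ on $U$, so a continuity argument confines the trajectory to $U\subset B(p,R)\subset B(p,2R)$. With that confinement in place, the claim is essentially a packaging of Lemmas \ref{lem:grad}, \ref{lem:reg} and Proposition \ref{prop:lip}.
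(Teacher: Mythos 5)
Your proof is correct and follows essentially the same route as the paper: Lemma \ref{lem:grad} gives the $(\lambda r/4)$-regularity of $h$ in the direction to $q$ outside $B(q,r/2)$, Lemma \ref{lem:reg} then yields the time bound $8R/(\lambda r)$, and Proposition \ref{prop:lip} (with $h$ being $1$-Lipschitz and $(-\lambda)$-concave, $\lambda>0$) gives the $1$-Lipschitz constant. Your closing remark on confinement of the flow to $U\subset B(p,2R)$ via monotonicity of $h$ along gradient curves is a sound elaboration of a point the paper leaves implicit.
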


We choose $r>0$ so small that ${(2r)}^{-1}S(q,2r)$ is sufficiently close to the space of directions $\Sigma_q$ at $q$ in the Gromov--Hausdorff distance (the choice of $r$ will be determined by the next sentence and the argument in Step 3).
In particular, there exists $\nu>0$ such that for any $x\in B(q,r)\setminus\{q\}$ there is $y\in S(q,2r)$ with
\[\tilde\angle qxy>\pi/2+\nu\]
(here $\nu\to\pi/2$ as $r\to 0$, but we cannot use such a strong regularity once the situation is lifted to $\tilde M_i$; see Claim \ref{clm:stab}).
This implies that the gradient flow of $\dist_{S(q,2r)}$ gives a Lipschitz strong contraction of $B(q,r)$ to $q$.
Indeed, for any $x\in B(q,r)\setminus\{q\}$, let $y\in S(q,2r)$ be as above and let $z\in S(q,2r)$ be closest to $x$.
Then we have
\[d\dist_{S(q,2r)}(\uparrow_x^q)\ge-\cos\tilde\angle zxq\ge-\cos\tilde\angle yxq\ge\sin\nu.\]
In other words, $\dist_{S(q,2r)}$ is $(\sin\nu)$-regular in the direction to $q$ on $B(q,r)\setminus\{q\}$.
Hence Lemma \ref{lem:reg} implies that the gradient flow of $\dist_{S(q,2r)}$ crushes $B(q,r)$ to $q$ within time at most $r/\sin\nu$.
This flow is $c(r,\nu)$-Lipschitz on $B(q,r)$, where $c(r,\nu)$ is a constant depending only on $r$ and $\nu$ (Proposition \ref{prop:lip}; actually independent of $r$).
In conclusion,

\begin{clm}\label{clm:flow2}
The gradient flow $\Psi$ of $\dist_{S(q,2r)}$ gives a $(c(r,\nu),r/\sin\nu)$-Lipschitz contraction of $B(q,r)$ to $q$.
\end{clm}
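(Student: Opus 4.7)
The plan is to apply Lemma \ref{lem:reg} to $f := \dist_{S(q,2r)}$ with target set $A := \{q\}$. The paragraph immediately preceding the claim already does most of the work, establishing $df(\uparrow_x^q) \ge \sin\nu$ for every $x \in B(q,r)\setminus\{q\}$ via the $\nu$-angle condition on $S(q,2r)$ and the first variation formula. What remains is to assemble the three ingredients of a Lipschitz strong contraction: the time required to reach $q$, a Lipschitz bound on the flow, and the fixedness of $q$.

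From the $(\sin\nu)$-regularity of $f$ in the direction to $q$, Lemma \ref{lem:reg} yields that the gradient flow $\Phi$ of $f$ monotonically decreases $|q\,\cdot|$ with velocity at least $\sin\nu$ on $B(q,r)\setminus\{q\}$. Since every $x \in B(q,r)$ satisfies $|qx| < r$, I conclude $\Phi(x,t) = q$ for all $t \ge r/\sin\nu$. For the Lipschitz estimate I would invoke Proposition \ref{prop:lip}: $f$ is $1$-Lipschitz, and by the standard concavity estimate for distance functions recalled in Section \ref{sec:pre}, $f$ is $(-\lambda)$-concave on $B(q,r)$ for some $\lambda$ controlled by the distance $r$ from $B(q,r)$ to $S(q,2r)$. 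Hence $\Phi$ restricted to $B(q,r)\times[0,r/\sin\nu]$ is $c(r,\nu)$-Lipschitz with $c(r,\nu) := \max\{1, e^{\lambda r/\sin\nu}\}$.

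Finally, to upgrade this to a \emph{strong} contraction, I need $\Phi$ to fix $q$ throughout. This follows once $q$ is identified as the unique maximum of $f$ on $B(q,2r)$: in the nonnegatively curved limit $N$, geodesics from $q$ extend to length $2r$ in every direction, so for any $x \ne q$ near $q$ the point of $S(q,2r)$ along the geodesic continuing through $x$ has distance $<2r$ to $x$, giving $f(x) < 2r = f(q)$. Consequently $d_q f \le 0$ on all of $T_q$, which by the defining properties of the gradient forces $\nabla_q f = 0$, and therefore $\Phi(q,t) = q$ for all $t$. The main point requiring care will be ensuring that $\lambda$, and hence $c(r,\nu)$, truly depends only on $r$ and $\nu$ (not on hidden data of the ambient space); once this is pinned down, the rest is an essentially routine assembly of pieces already in place.
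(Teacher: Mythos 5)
Your proof follows the same three-step assembly as the paper: the $(\sin\nu)$-regularity established in the preceding paragraph, Lemma \ref{lem:reg} for the time bound $r/\sin\nu$, and Proposition \ref{prop:lip} for the Lipschitz constant. One sign slip: on $B(q,r)$ the distance function $\dist_{S(q,2r)}$ is $\lambda$-concave for some $\lambda>0$ (semiconcave, not strictly concave), so you should write $\lambda$-concave rather than $(-\lambda)$-concave; your formula $c(r,\nu)=\max\{1,e^{\lambda r/\sin\nu}\}$ is already consistent with the correct sign, and since $\lambda\sim 1/r$ it also explains the paper's parenthetical remark that $c$ is in fact independent of $r$. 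Your extra paragraph on $\nabla_q f=0$ addresses a point the paper leaves implicit and is correct in spirit, though it is cleaner to deduce $d_qf\le 0$ directly from the first variation formula together with the fact (built into the choice of $r$ and $\nu$) that $\Uparrow_q^{S(q,2r)}$ is $(\pi/2+\nu)$-dense in $\Sigma_q$, rather than via geodesic extendability in $N$, which need not hold in an Alexandrov space.
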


Combining the two gradient flows in Claims \ref{clm:flow1} and \ref{clm:flow2}, we get a $(C,1)$-Lipschitz strong contraction of $U$ to $q$ for some $C>0$.

We now verify that the same construction works for $\tilde U_i$.
As before, let $q_i\in\tilde U_i$ denote the unique maximum point of $\tilde h_i$, which converges to $q$.
The same argument as Claim \ref{clm:flow1} using the $(-\lambda)$-concavity of $\tilde h_i$ shows that $\tilde U_i$ can be pushed into $\tilde B(q_i,r/2)$ by the gradient flow $\Phi_i$ of $\tilde h_i$ with uniform Lipschitz constant and time.
However, in general $\tilde B(q_i,r/2)$ is not Lipschitz strongly contractible to $q_i$.
To contract this ball, we replace $q_i$ with another nearby point, which is the final step.

\step{Step 3}
This part is the same as the proof of \cite[3.5]{MY:stab}, a noncollapsing version of \cite[3.2]{Y:ess}.
We show that there exists $\hat q_i\in\tilde M_i$ converging to $q$ such that the distance function from $\tilde S(q_i,2r)$ is uniformly regular (independent of $i$) on $\tilde B(\hat q_i,r)\setminus\{\hat q_i\}$ in the direction to $\hat q_i$.
Then the same argument as Claim \ref{clm:flow2} shows that $\tilde B(\hat q_i,r)$ is contractible to $\hat q_i$ with uniform Lipschitz constant and time.

The following construction is similar to Step 1, but here we use the space of directions $\Sigma_q$ instead of the ideal boundary $N(\infty)$.
Recall that $(2r)^{-1}S(q,2r)$ is Gromov--Hausdorff close to $\Sigma_q$.
For fixed $0<\theta<\pi/100$, we take a maximal $\theta r$-discrete set $\{x_\alpha\}_\alpha$ in $S(q,2r)$.
For each $\alpha$ we take a maximal $\delta r$-discrete set $\{x_{\alpha\beta}\}_{\beta=1}^{N_\alpha}$ in $S(q,2r)\cap B(x_\alpha,2\theta r)$, where $0<\delta\ll\theta$ will be determined in Claim \ref{clm:stab}.
Then the Bishop--Gromov inequality on $\Sigma_q$ implies
\begin{equation}\label{eq:low}
N_\alpha\ge c\left(\theta/\delta\right)^{n-1},
\end{equation}
where $c>0$ depends only on $n$, $D$, and $v$.

We define a function $f$ on $B(q,r)$ by
\[f:=\min_\alpha f_\alpha,\quad f_\alpha:=\frac1{N_\alpha}\sum_{\beta=1}^{N_\alpha}|x_{\alpha\beta}\cdot|.\]
Since $\{x_\alpha\}_\alpha$ is a $\theta r$-net and $\theta<\pi/100$, it is easy to see that
\[f(q)\ge f(x)+|qx|/2\]
for any $x\in B(q,r)$ (see for instance \cite[3.3]{Y:ess}).

Now we lift the situation to $\tilde M_i$.
Let $f_i$ denote the lift of $f$.
We define $\hat q_i$ as a maximum point of $f_i$ on $\tilde B(q_i,r)$, which converges to the unique maximum point $q$ of $f$.

\begin{clm}\label{clm:stab}
There exists $\nu>0$ such that for any sufficiently large $i$ and any $x_i\in\tilde B(\hat q_i,r)\setminus\{\hat q_i\}$, there is $y_i\in\tilde S(q_i,2r)$ with
\[\tilde\angle\hat q_ix_iy_i>\pi/2+\nu.\]
\end{clm}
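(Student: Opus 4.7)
The plan is to argue by contradiction, following the noncollapsing stability scheme of \cite[3.5]{MY:stab}. Suppose no uniform $\nu > 0$ works; then there exist sequences $i_k \to \infty$, $x_{i_k} \in \tilde B(\hat q_{i_k}, r)$, and $\nu_k \to 0$ such that $\tilde\angle \hat q_{i_k} x_{i_k} y \le \pi/2 + \nu_k$ for every $y \in \tilde S(\hat q_{i_k}, 2r)$. Since $q$ is the unique maximum of $f$ on $\bar B(q, r)$ with the quantitative gap $f(q) \ge f(x) + |qx|/2$ established earlier in the step, the maxima $\hat q_{i_k}$ of $f_{i_k}$ must converge to $q$ in $N$; after extraction, $x_{i_k}$ converges to some $x_\infty \in \bar B_N(q, r)$.

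I would then construct a candidate $y_\infty$ in $N$ with $\tilde\angle q x_\infty y_\infty > \pi/2 + \nu'$ for some uniform $\nu' > 0$, and lift it back to some $y_{i_k} \in \tilde S(\hat q_{i_k}, 2r)$. Because $\tilde M_{i_k}$ has curvature $\ge -\epsilon_{i_k}^2 \to 0$, both comparison angles are effectively computed in the Euclidean model in the limit, so $\tilde\angle \hat q_{i_k} x_{i_k} y_{i_k} \to \tilde\angle q x_\infty y_\infty$ by continuity, producing the contradiction. When $x_\infty \neq q$, I would take $y_\infty = x_{\alpha_0}$ where $x_{\alpha_0}$ is the net point with $\uparrow_q^{x_{\alpha_0}}$ closest to $\uparrow_q^{x_\infty}$: the $\theta r$-density of $\{x_\alpha\}$ in $S(q, 2r)$ together with the approximation $(2r)^{-1} S(q, 2r) \approx \Sigma_q$ forces $\angle_q(x_{\alpha_0}, x_\infty)$ well below $\pi/3$ once $\theta$ is small, and then a direct Euclidean calculation on the triangle with sides $|qx_\infty| \le r$, $|qx_{\alpha_0}| = 2r$, $|x_\infty x_{\alpha_0}|$ confirms that the angle at $x_\infty$ exceeds $\pi/2$ by a definite margin. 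The degenerate case $x_\infty = q$, where $a_k := |x_{i_k}\hat q_{i_k}| \to 0$, is handled at the finite stage by choosing $y_{i_k} = x_{\alpha\beta, i_k}$ whose direction from $\hat q_{i_k}$ is closest to $\uparrow_{\hat q_{i_k}}^{x_{i_k}}$; as $a_k \to 0$ one computes $\tilde\angle \to \pi - \psi$ with $\psi = O(\delta)$, exceeding $\pi/2 + \nu$ for $\delta$ small.

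The hard point is handling those directions $\xi = \uparrow_{\hat q_{i_k}}^{x_{i_k}}$ that might be ``singular'' at $\hat q_{i_k}$ and not converge to a direction in $\Sigma_q$. This is precisely where the choice of $\hat q_{i_k}$ as a maximum of $f_{i_k}$ on $\tilde B(q_{i_k}, r)$, rather than the max $q_{i_k}$ of $\tilde h_{i_k}$, is crucial: at the (interior, for large $k$) maximum, the first-order condition $df_{i_k}(\xi) \le 0$ yields some active $\alpha$ with $\sum_\beta \cos \angle(\xi, \uparrow_{\hat q_{i_k}}^{x_{\alpha\beta, i_k}}) \ge 0$. Combined with the lower bound $N_\alpha \gtrsim (\theta/\delta)^{n-1}$ from \eqref{eq:low} and the $\delta r$-density of $\{x_{\alpha\beta}\}_\beta$ around $x_\alpha$, this ensures uniformly in $i_k$ the existence of some $\beta$ with $\angle(\xi, \uparrow_{\hat q_{i_k}}^{x_{\alpha\beta,i_k}})$ quantitatively below $\pi/2$, which closes the argument once $\theta$ and $\delta$ are fixed small enough at the outset.
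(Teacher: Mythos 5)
Your proposal captures the key ingredients — argue by contradiction, exploit the choice of $\hat q_{i_k}$ as a maximum of $f_{i_k}$ to get the first-order inequality $\sum_\beta\cos\angle(\xi,\uparrow_{\hat q_{i_k}}^{x_{\alpha\beta,i_k}})\ge0$ for some active $\alpha$, then use the packing lower bound \eqref{eq:low} and volume comparison. The split into $x_\infty\neq q$ and $x_\infty=q$, and the recognition that $|\hat q_{i_k}x_{i_k}|\to0$ is the genuinely hard case, also match the paper. But your route through the degenerate case is genuinely different from the paper's, and there the sketch has real gaps. The paper rescales by $|\hat q_ix_i|^{-1}$ and passes to a blow-up limit $(\hat N,\hat q)$, where the $v_{\alpha\beta}$ become directions of rays; it then uses a Busemann function to translate the contradiction assumption into $\angle(v,v_{\alpha\beta})\ge\pi/2$, and applies volume comparison \emph{in the fixed limit space} to contradict the averaged first-order inequality. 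You instead stay at the finite stage and try to exhibit a single bad $y_{i_k}$.

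Two specific issues arise. First, the claim that ``$\tilde\angle\to\pi-\psi$ with $\psi=O(\delta)$'' overstates what your argument yields. The first-order condition plus volume comparison only gives \emph{some} $\beta$ with $\angle(\xi,\uparrow_{\hat q_{i_k}}^{x_{\alpha\beta,i_k}})<\pi/2-c$, where $c$ is a constant coming from $\theta$, $n$, and the packing bound --- not anything near zero of order $\delta$. The net $\{v_{\alpha\beta}\}_\beta$ is concentrated in a $2\theta$-ball around $v_\alpha$ in $\Sigma_{\hat q_{i_k}}$, and there is no reason $\xi$ should be within $O(\delta)$ of a ray direction. This is inconsistent with your own last paragraph, which (correctly) only asserts the angle is ``quantitatively below $\pi/2$.'' Second, going from the angle bound $\psi<\pi/2-c$ to the comparison angle bound $\tilde\angle\hat q_{i_k}x_{i_k}y_{i_k}>\pi/2+\nu$ for small $a_k$ requires an \emph{upper} bound on $|x_{i_k}y_{i_k}|$, hence a first-variation estimate with a uniformly controlled second-order remainder as the spaces $\tilde M_{i_k}$ vary. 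This is true (the concavity constant of $\dist_{y_{i_k}}$ at distance $\ge r$ is uniform since the rescaled curvature lower bounds $-\epsilon_{i_k}^2\to0$), but it has to be argued, and it is exactly the delicate point that the paper's blow-up removes: after rescaling, the $v_{\alpha\beta}$ are genuine rays, angles converge by lower semicontinuity, and no remainder term appears. A similar issue occurs in your $x_\infty\neq q$ case: Toponogov in a nonnegatively curved space gives \emph{lower} bounds on $|x_\infty x_{\alpha_0}|$ from the angle at $q$, not the upper bound needed for the comparison angle estimate; the missing ingredient is the quantitative GH closeness of $(2r)^{-1}S(q,2r)$ to $\Sigma_q$, which you mention but do not connect to the distance bound. (The paper simply invokes the $N$-claim as established earlier by the choice of $r$ and concentrates the proof entirely on the degenerate case.)
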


\begin{proof}
Suppose the claim does not hold.
Passing to a subsequence, we can assume that there are $\nu_i\to 0$ and $x_i\in\tilde B(\hat q_i,r)\setminus\{\hat q_i\}$ such that
\begin{equation}\label{eq:xi}
\tilde\angle\hat q_ix_iy_i\le\pi/2+\nu_i
\end{equation}
for any $y_i\in\tilde S(q_i,2r)$.
Note that $|\hat q_ix_i|\to 0$ since the statement corresponding to Claim \ref{clm:stab} holds on $N$ due to the choice of $r$ (see Step 2).
We may assume that $(|\hat q_ix_i|^{-1}\tilde M_i,\hat q_i)$ converges to a nonnegatively curved Alexandrov space $(\hat N,\hat q)$.

Let $x_{\alpha\beta}^i\in\tilde S(q_i,2r)$ and $f_\alpha^i$ denote the lifts of $x_{\alpha\beta}$ and $f_\alpha^i$, respectively, that define $f_i$.
Let $v_i,v_{\alpha\beta}^i\in\Sigma_{\hat q_i}$ denote the directions of shortest paths from $\hat q_i$ to $x_i$ and $x_{\alpha\beta}^i$, respectively (choose them so that the following first variation formula holds).
Fix $\alpha$ such that $df_i(v_i)=df_\alpha^i(v_i)$ for infinitely many $i$.
Since $f_i$ attains a maximum at $\hat q_i$, this implies
\[\frac1{N_\alpha}\sum_\beta-\cos\angle(v_i,v_{\alpha\beta}^i)\le 0.\]
We may assume that $v_i$ and $v_{\alpha\beta}^i$ converge to the directions $v,v_{\alpha\beta}\in\Sigma_{\hat q}$ of a shortest path and rays emanating from $q$, respectively.
Then the lower semicontinuity of angle implies
\begin{equation}\label{eq:up}
\frac1{N_\alpha}\sum_\beta-\cos\angle(v,v_{\alpha\beta})\le 0.
\end{equation}

The assumption \eqref{eq:xi} at the beginning of the proof implies
\[\tilde\angle x_i\hat q_ix_{\alpha\beta}^i\ge\pi/2-\nu_i-\mu_i,\]
where $\mu_i\to 0$.
Passing to the limit and using the monotonicity of angle, we obtain $\angle(v,v_{\alpha\beta})\ge\pi/2$.
Moreover, since $\{x_{\alpha\beta}\}_{\beta}$ is $\delta r$-discrete, we see that $\{v_{\alpha\beta}\}_\beta$ is $\delta/2$-discrete.

The rest of the proof is similar to the proof of Claim \ref{clm:conc}.
We denote by $N_\alpha'$ the number of $1\le\beta\le N_\alpha$ such that $\angle(v,v_{\alpha\beta})=\pi/2$.
Then we have
\[N_\alpha'\le C_n/\delta^{n-2},\]
where $C_n$ is a constant depending only on $n$.
Since $\angle (v,v_{\alpha\beta})\ge\pi/2$, this together with \eqref{eq:low} implies that for the majority of $\beta$, we have $\angle (v,v_{\alpha\beta})>\pi/2$, provided $\delta\ll c$.
Hence the average of $-\cos\angle(v,v_{\alpha\beta})$ should be positive, which contradicts \eqref{eq:up}.
\end{proof}

As we have seen in Claim \ref{clm:flow2}, Claim \ref{clm:stab} is enough to conclude that the gradient flow $\Psi_i$ of $\dist_{\tilde S(q_i,2r)}$ is a Lipschitz strong contraction of $\tilde B(\hat q_i,r)$ to $\hat q_i$ with uniform Lipschitz constant and time.
Thus its composition with the gradient flow $\Phi_i$ of $\tilde h_i$ gives a Lipschitz weak contraction of $\tilde U_i$ to $\hat q_i$ (the weakness means that it does not necessarily fix $\hat q_i$).
Furthermore, it is easy to modify $\Phi_i$ to fix a small neighborhood of $q_i$.
For example, the following flow fixes $\tilde B(q_i,r/10)$ while keeping the Lipschitz constant and time uniformly bounded above:
\[\hat \Phi_i(x,t):=\Phi_i(x,\rho(|q_ix|)t),\]
where $\rho:[0,\infty)\to[0,1]$ is a $5/r$-Lipschitz function such that $\rho\equiv0$ on $[0,r/10]$ and $\rho\equiv1$ on $[r/5,\infty)$.
Since $\hat q_i$ converges to $q$ as well as $q_i$, this modified flow fixes $\hat q_i$ for sufficiently large $i$.
Therefore, the composition of $\hat\Phi_i$ and $\Psi_i$ gives a $(C,1)$-Lipschitz strong contraction of $\tilde U_i$ to $\hat q_i$, where $C>0$ is a constant independent of $i$.
This completes the proof of Theorem \ref{thm:conv}.
\end{proof}

\begin{rem}
It is further possible to show that the distance to the center $\hat q_i\in\tilde U_i$ is nonincreasing during the contraction (in other words, $\tilde U_i$ is \textit{strongly Lipschitz contractible} to $\hat q_i$ in the sense of \cite[1.4]{MY:stab}).
This is trivial for the second flow $\Psi_i$, but not for the first (modified) flow $\hat\Phi_i$.
To see this for the first flow, we use the $(-\lambda)$-concavity and $1$-Lipschitz continuity of $\tilde h_i$ to obtain
\begin{align*}
d\tilde h_i(\uparrow_x^{\hat q_i})&\ge\frac{\tilde h_i(\hat q_i)-\tilde h_i(x)+\lambda|\hat q_ix|^2/2}{|\hat q_ix|}\\
&\ge\frac{\tilde h_i(q_i)-|q_i\hat q_i|-\tilde h_i(x)+\lambda|\hat q_ix|^2/2}{|\hat q_ix|}\ge-\frac{|q_i\hat q_i|}{|\hat q_ix|}+\lambda\frac{|\hat q_ix|}2.
\end{align*}
Since $|q_i\hat q_i|\to 0$ as $i\to\infty$, the last term is greater than $\lambda r/50$ outside $\tilde B(\hat q_i,r/20)$.
This means that $\tilde h_i$ is $(\lambda r/50)$-regular in the direction to $\hat q_i$ outside $\tilde B(\hat q_i,r/20)$.
Furthermore, the modified flow $\hat\Phi_i$ fixes $\tilde B(\hat q_i,r/20)$.
By Lemma \ref{lem:reg} the distance to $\hat q_i$ is nonincreasing under this modified flow.
\end{rem}

\section{Quantitative good coverings}\label{sec:good}

In this section, we show the existence of a quantitative version of the good covering of an Alexandrov space introduced in \cite{MY:good}.
The proof is based only on the properties established in Theorem \ref{thm:conv} and we no longer use the Gromov--Hausdorff compactness of the class of Alexandrov spaces.
Fix $n$, $D$, and $v$ as before.

\begin{thm}\label{thm:good}
There is $C=C(n,D,v)>0$ satisfying the following:
for any $M\in \mathcal A(n,D,v)$ and $0<\epsilon<C^{-1}$, there exists a finite open covering $\mathcal U=\{ U_j\}_{j=1}^{N}$ of $M$ such that
\begin{enumerate}
\item any $\epsilon$-ball in $M$ is contained in some $U_j$;
\item for each $j$, the number of $U_i$ intersecting $U_j$ is at most $C$;
\item every nonempty intersection of $U_j$ is $(C,\epsilon)$-Lipschitz (strongly) contractible;
\item for any $M'\in\mathcal A(n,D,v)$ that is $(C^{-1}\epsilon)$-close to $M$ with respect to the Gromov--Hausdorff distance, there is a lift $\mathcal U'$ of $\mathcal U$ to $M'$ that satisfies the same properties as (1)--(3) and has the same nerve as $\mathcal U$.
\end{enumerate}
\end{thm}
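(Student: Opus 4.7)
The plan is to build the cover from a maximal $\epsilon$-discrete net $\{p_j\}_{j=1}^N$ in $M$ together with the Lipschitz contractible convex domains supplied by Theorem \ref{thm:conv}. For each $j$ I would apply Theorem \ref{thm:conv} at $p_j$ (with a fixed small multiple of $\epsilon$ as the parameter) to obtain a $1$-Lipschitz, $(-(C_0\epsilon)^{-1})$-concave function $h_j$ defined near $p_j$, and set
\[U_j := \{h_j > -t\epsilon\}\]
for a fixed $0 < t \ll C_0^{-1}$. By Theorem \ref{thm:conv}, each $U_j$ is convex, $(C_0,\epsilon)$-Lipschitz contractible, contains $B(p_j, 2\epsilon)$, has diameter at most $2C_0\epsilon$, and its peak $q_j$ satisfies $B(q_j, C_0^{-1}\epsilon)\subset U_j$. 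Property (1) then follows from maximality of the net, and (2) from the diameter bound together with the Bishop-Gromov inequality applied to the pairwise-disjoint $\epsilon/2$-balls around net points.

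The heart of the argument is (3). For any $J$ with $U_J := \bigcap_{j\in J} U_j \ne \emptyset$, I would consider
\[h_J := \min_{j\in J} h_j,\]
which is still $1$-Lipschitz and $(-\lambda)$-concave with $\lambda = (C_0\epsilon)^{-1}$. Strict concavity forbids two distinct maxima on the compact convex set $\bar U_J$, so $h_J$ has a unique maximum $q_J \in \bar U_J$; moreover $q_J \in U_J$, for otherwise $h_J \le -t\epsilon$ on $\bar U_J$, contradicting $U_J \ne \emptyset$. Following the strategy of Theorem \ref{thm:conv}, I would split the contraction of $U_J$ into two flows: by Lemmas \ref{lem:grad} and \ref{lem:reg} combined with Proposition \ref{prop:lip}, the gradient flow of $h_J$ is $1$-Lipschitz, keeps $U_J$ invariant (since $h_J$ increases along it), and pushes $U_J$ into $B(q_J, c\epsilon)$ in time $O(\epsilon)$ for any fixed small $c$; to contract this small ball, one applies Theorem \ref{thm:conv} at $q_J$ with a scale chosen so that the resulting $(C_0,\epsilon)$-Lipschitz contractible convex domain $\tilde U$ around $q_J$ is contained in $U_J$. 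Composing the two flows gives the required $(C,\epsilon)$-Lipschitz strong contraction of $U_J$.

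For property (4), Theorem \ref{thm:conv}(3) supplies lifts $h_j'$ of $h_j$ on any $M'$ with $d_\mathrm{GH}(M,M') < C^{-1}\epsilon$, with the same concavity and Lipschitz bounds. Setting $U_j' := \{h_j' > -t\epsilon\}$, the same analysis on $M'$ gives (1)--(3) there. To guarantee that the nerves agree, I would use the standard three-parameter trick: choose $t_- < t < t_+$ with gaps much larger than the lift error between $h_j$ and $h_j'$, and compare the enlarged and shrunken superlevel sets $\{h_j > -t_\pm\epsilon\}$ on $M$ with their lifts on $M'$ to conclude that $U_J\ne\emptyset$ on $M$ if and only if $U_J'\ne\emptyset$ on $M'$.

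The principal obstacle lies in the contraction step of (3): to ensure $\tilde U \subset U_J$ with uniform constants one needs a uniform lower bound $\dist(q_J, \partial U_J) \ge c'\epsilon$, equivalently $h_J(q_J) + t\epsilon \ge c'\epsilon$, independent of $J$ and $M$. Such a bound can degenerate in "thin" intersections where $h_J$ barely exceeds $-t\epsilon$. I expect this to be handled by a blowup/compactness argument parallel to the proof of Theorem \ref{thm:conv}: after rescaling by $\epsilon^{-1}$ and passing to a noncollapsed nonnegatively curved Alexandrov limit, one either identifies and excludes the degenerate configurations or treats the collapsing intersections separately by rescaling to their own natural scale and reapplying the contraction construction there.
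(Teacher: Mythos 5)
Your outline matches the paper's up to the decisive point, and you have correctly located the decisive point: one must ensure that every nonempty intersection $U_J$ contains a $c'\epsilon$-ball around the peak of its (strictly concave) defining function $h_J=\min_{j\in J}h_j$, since otherwise the first gradient flow, though $1$-Lipschitz in space, takes time of order $\epsilon^2/(\text{inradius})\gg\epsilon$ to reach the tiny ball, and reparametrizing to the interval $[0,\epsilon]$ destroys the uniform Lipschitz bound. What is missing is the actual mechanism that produces this lower bound, and the two suggestions you offer do not supply it. A compactness/blowup argument cannot exclude thin intersections: for a fixed level $t$, two superlevel sets can genuinely overlap in an arbitrarily thin sliver, so there is no contradiction to be extracted in a limit. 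Rescaling the thin intersection to its own scale does not help either, because the Lipschitz constant blows up exactly from the time-reparametrization just described, and because the small domain supplied by Theorem~\ref{thm:conv} at the peak need not fit inside a sliver whose inradius is much smaller than its diameter.

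The paper's resolution is to give up the idea of a single fixed level $t$ and instead choose a separate level $t_j\in[0,C_0^{-1}]$ for each $j$ inductively, using a pigeonhole argument on the nerve. For fixed $t_1,\dots,t_{j-1}$, the nerve $\mathcal N_j(t)$ of $\{U_1,\dots,U_{j-1},U_j(t)\}$ is a nondecreasing finite simplicial complex of uniformly bounded size as $t$ ranges over $[0,C_0^{-1}]$, so there is a $t_j$ and a uniform $c_0>0$ with $\mathcal N_j(t_j-c_0)=\mathcal N_j(t_j+c_0)$. This single choice makes every intersection involving $U_j(t_j)$ \emph{robust}: one can shrink all the defining functions by $O(c_0\epsilon)$ in value without killing any intersection, and combined with the gradient estimate of Lemma~\ref{lem:grad} and the inductive hypothesis this yields the uniform inradius bound in (3). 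The same stability of the nerve under $\pm c_0$ level perturbations is exactly what makes the nerve insensitive to the $O(C^{-1}\epsilon)$ error in the lifts $h_j'$, which is how (4) is obtained. You touched this idea for (4) (your ``three-parameter trick'') but did not notice that it is also what rescues (3); with a single fixed $t$ neither property holds, so the inductive level selection is the essential new ingredient you need to add.
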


The last statement on the nerve means that a collection of $U_j$ has nonempty intersection if and only if so does its lift (see Section \ref{sec:main1} for the nerve).

\begin{rem}
The covering number $N$ depends on $\epsilon$, while the multiplicity of the covering is uniformly bounded independent of $\epsilon$ as in the condition (2).
\end{rem}

As we will see in the following proof, $U_j$ is actually a superlevel set of a strictly concave function constructed in Theorem \ref{thm:conv}.
Excluding the stability condition (4), we define

\begin{dfn}\label{dfn:good}
For a general metric space $M$, a (not necessarily finite) open covering $\mathcal U$ satisfying the conditions (1)--(3) is called a \textit{$(C,\epsilon)$-good covering} of $M$.
\end{dfn}

\begin{proof}[Proof of Theorem \ref{thm:good}]
Throughout the proof, we will denote by $C_0$ the constant $C$ of Theorem \ref{thm:conv}.
Take a maximal $\epsilon/2$-discrete set $\{p_j\}_{j=1}^N$ of $M$.
For each $j$ and $0\le t\le C_0^{-1}$, let
\[U_j(t)=\{h_j>-t\epsilon\}\]
be a $(C_0,\epsilon)$-Lipschitz contractible domain containing $B(p_j,2\epsilon)$, where $h_j$ is a $1$-Lipschitz ($-(C_0\epsilon)^{-1}$)-concave function as in Theorem \ref{thm:conv}(1).
Similarly, we define its lift $U_j'(t)$ to $M'$ by using $h_j'$ of Theorem \ref{thm:conv}(3).

We prove that there exists
\[0\le t_j\le C_0^{-1}\]
depending on $j$ such that the coverings $\{U_j(t_j)\}_j$ and $\{U_j'(t_j)\}_j$ satisfy the conditions (1)--(4) above.
Actually the conditions (1) and (2) are satisfied for an arbitrary choice of $t_j$.
(1) follows from the facts that $\{p_j\}_j$ is a maximal $\epsilon/2$-discrete set and that $U_j(t)$ contains the $2\epsilon$-ball around $p_j$.
(2) follows from these facts plus Remark \ref{rem:diam} and the Bishop--Gromov inequality.

To find $t_j$ satisfying (3) and (4), we show the following claim.

\begin{clm}\label{clm:good}
There exist $0\le t_j\le C_0^{-1}$ and a constant $0<c_0\ll C_0^{-1}$ depending only on $n$, $D$, and $v$ satisfying the following:
\begin{enumerate}
\item[(i)] any nonempty intersection $U$ of $U_j(t_j)$ contains the $(c_0\epsilon)$-ball around its peak;
\item[(ii)] the same holds for $U_j'(t_j)$, and moreover, $\{U_j'(t_j)\}_j$ has the same nerve as $\{U_j(t_j)\}_j$.
\end{enumerate}
\end{clm}

Note that $U$ is again a superlevel set of a strictly concave function $h$ defined as the minimum of $h_j+t_j\epsilon$.
The \textit{peak} of $U$ is the unique maximum point of $h$.

\begin{proof}
\step{Step 1}
We first define $t_j$ and $c_0$ by the following subclaim.
The proof is a simple application of the Bishop--Gromov inequality.

\begin{subclm}\label{subclm:good}
There exist $0\le t_j\le C_0^{-1}$ and $0<c_0\ll C_0^{-1}$ such that the coverings
\[\{U_j(t_j-c_0)\}_j,\quad\{U_j(t_j+c_0)\}_j\]
have the same nerve, where $c_0$ depends only on $n$, $D$, and $v$.
\end{subclm}

\begin{proof}
We prove by induction on $k$ the following partial statement:
\begin{quote}
There exist $0\le t_j^k\le C_0^{-1}$ ($1\le j\le k$) and $0<c_0^k\ll C_0^{-1}$ depending on $k$ such that $\{U_j(t_j^k-c_0^k)\}_{j=1}^k$ and $\{U_j(t_j^k+c_0^k)\}_{j=1}^k$ have the same nerve.
\end{quote}
The desired $t_j$ and $c_0$ would be obtained as $t_j^N$ and $c_0^N$, where $N$ is the covering number (but see also the last paragraph of the proof).

The base case $k=1$ is trivial.
Suppose the above claim holds for $k$.
Consider a one-parameter family of subcoverings
\[\mathcal U_{k+1}(t):=\{U_j(t_j^k+c_0^kt)\}_{j=1}^k\cup\{U_{k+1}(t)\},\]
where $0\le t\le C_0^{-1}$ (we may assume $t_j^k+c_0^kC_0^{-1}\le C_0^{-1}$ so that $U_j(t_j^k+c_0^kC_0^{-1})$ makes sense).
Clearly the nerve of the above subcovering is nondecreasing in $t$ with respect to the inclusion relation.
Recall that the multiplicity of the largest covering $\{U_j(C_0^{-1})\}_j$ is uniformly bounded by a constant depending only on $n$, $D$, and $v$ (see the proof of Theorem \ref{thm:good}(2) before Claim \ref{clm:good}).
Hence one can find $0\le t_{k+1}\le C_0^{-1}$ and $0<c_0'\ll C_0^{-1}$ such that the coverings
\[\mathcal U_{k+1}(t_{k+1}-c_0'),\quad\mathcal U_{k+1}(t_{k+1}+c_0')\]
have the same nerve, where $c_0'$ depends only on $n$, $D$, and $v$.
Now we define
\[t_j^{k+1}:=t_j^k+c_0^k t_{k+1},\quad t_{k+1}^{k+1}:=t_{k+1},\quad c_0^{k+1}:=c_0^kc_0',\]
for $1\le j\le k$.
These satisfy the partial statement for $k+1$.

In the above argument, the number of the induction steps is $N$, the covering number, which depends on $\epsilon$ and is not uniformly bounded in terms of $n$, $D$, and $v$.
However, since the multiplicity of the largest covering $\{U_j(C_0^{-1})\}_j$ is uniformly bounded, one can modify the above argument so that the number of the actual induction steps applied to each $U_j$ is uniformly bounded.
Consequently, we obtain a constant $c_0$ depending only on $n$, $D$, and $v$, as stated in Subclaim \ref{subclm:good}.
\end{proof}

\step{Step 2}
We now prove (i) and (ii) of Claim \ref{clm:good}.

To prove (i), let $U$ be the nonempty intersection of $U_i(t_i)$ and $p\in U$ its peak, where $i$ runs over a subset of $\{1,\dots,N\}$.
Recall that
\[U=\{h>0\},\quad h=\min_i\{h_i+t_i\epsilon\},\]
and $p$ is the unique maximum point of $h$.
By Subclaim \ref{subclm:good}, $U_i(t_i-c_0)=\{h_i>-(t_i-c_0)\epsilon\}$ also have nonempty intersection, and thus
\[h(p)>c_0\epsilon.\]
Since $h$ is $1$-Lipschitz, this implies $B(p,c_0\epsilon)\subset U$, as desired.

To prove (ii), suppose $M'$ is $(C^{-1}\epsilon)$-close to $M$ as in the assumption of Theorem \ref{thm:good}, where $C$ is large enough.
Let $U$ and $h$ be as above.
Let $U'$ and $h'$ denote the lifts of $U$ and $h$ to $M'$, respectively, that is, $U'=\{h'>0\}$.
From the construction of $h$ in Theorem \ref{thm:conv}, we see that $h'$ is $(C^{-1}\epsilon)$-close to $h$ via the Gromov--Hausdorff approximation.
Therefore, if $p'\in U'$ is the unique maximum point of $h'$, we have
\[h'(p')>c_0\epsilon/2,\]
provided $C^{-1}\ll c_0$.
Since $h'$ is $1$-Lipschitz, this implies $B(p',c_0\epsilon/2)\subset U'$.
Replacing $c_0$ with $c_0/2$, we obtain the first half of (ii).

The second half of (ii) immediately follows from Subclaim \ref{subclm:good} and the fact that $h'$ is $(C^{-1}\epsilon)$-close to $h$, providing $C^{-1}\ll c_0$.
\end{proof}

Now we finish the proof of Theorem \ref{thm:good}.
In what follows, $C$ denotes various positive constants depending only on $n$, $D$, and $v$.
To prove the condition (3), let $U$ be any nonempty intersection of $U_j(t_j)$ with peak $p$, defined by a strictly concave function $h$.
By Claim \ref{clm:good}(i), $U$ contains $B(p,c_0\epsilon)$.
Since $h$ is ($-(C_0\epsilon)^{-1}$)-concave, Lemma \ref{lem:grad} implies that
\[|\nabla h|\ge(2C_0C)^{-1}c_0\]
outside $B(p,C^{-1}c_0\epsilon)$.
Hence the gradient flow of $h$ provides a $(C,\epsilon)$-Lipschitz deformation retraction of $U$ into $B(p,C^{-1}c_0\epsilon)$.
By Theorem \ref{thm:conv}, $B(p,C^{-1}c_0\epsilon)$ is $(C,\epsilon)$-Lipschitz contractible in $B(p,c_0\epsilon)\subset U$.
Therefore $U$ is $(C,\epsilon)$-Lipschitz contractible in itself.

The remaining properties in the condition (4) follows from Claim \ref{clm:good}(ii) in the same way.
This completes the proof.
\end{proof}

\begin{rem}\label{rem:CAT}
Another important class of metric spaces having such good coverings is CAT spaces with a uniform local doubling constant (see Section \ref{sec:main2} for the doubling property).
Suppose $X$ is a locally $C$-doubling CAT($1$) space.
Take a maximal $\epsilon/2$-discrete set $\{p_j\}_j$ of $X$ and set $U_j:=B(p_j,2\epsilon)$, where $0<\epsilon<(10C)^{-1}$.
Then the same argument as above shows that this covering satisfies the conditions (1) and (2) of Theorem \ref{thm:good} (use the doubling property instead of the Bishop--Gromov inequality).
Moreover, if $\epsilon<\pi/10$, it satisfies the condition (3) since the CAT($1$) property implies that
\begin{itemize}
\item any metric ball $B$ in $X$ of radius less than $\pi/2$ is convex;
\item shortest paths are unique for every pair of points in $B$;
\item the contraction of $B$ to an arbitrary point $p\in B$ along the unique shortest paths is uniformly Lipschitz.
\end{itemize}
In particular, Theorem \ref{thm:nerve} in the next section can also be applied to such CAT spaces.
Furthermore, it is possible to obtain the stability condition (4) by considering $U_j(t_j):=B(p_j,t_j\epsilon)$ instead, for suitably chosen $2\le t_j\le 3$.
The value $t_j$ is defined by induction on $j$ so as to satisfy Subclaim \ref{subclm:good}, where $c_0$ now depends on the doubling constant.
Note that the proof of Subclaim \ref{subclm:good} only uses the doubling property.
\end{rem}

\section{First proof of Theorem \ref{thm:main}}\label{sec:main1}

In this section, we give the first proof of Theorem \ref{thm:main} using good coverings.
Since the good coverings of Alexandrov spaces satisfy the stability property as in Theorem \ref{thm:good}(4), it suffices to prove the following theorem (for more details, see the end of this section).

\begin{thm}\label{thm:nerve}
For any $C>0$ there is $\tilde C>0$ satisfying the following:
Let $M$ be a metric space with a $(C,\epsilon)$-good covering $\mathcal U$ in the sense of Definition \ref{dfn:good}.
Then $M$ is $(\tilde C,\epsilon)$-Lipschitz homotopy equivalent to the $\epsilon$-geometric realization of the nerve of $\mathcal U$.
\end{thm}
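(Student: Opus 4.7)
The plan is to follow the classical nerve lemma but make every step quantitative, so that all Lipschitz constants and homotopy times depend only on $C$. Write $K := |\mathcal N(\mathcal U)|_\epsilon$ for the $\epsilon$-geometric realization in which each simplex is a Euclidean simplex whose edges have length $\epsilon$, and denote by $v_j\in K$ the vertex corresponding to $U_j\in\mathcal U$. The goal is to construct maps $\varphi:M\to K$ and $\psi:K\to M$, each $\tilde C$-Lipschitz, together with $(\tilde C,\epsilon)$-Lipschitz homotopies $\psi\circ\varphi\simeq 1_M$ and $\varphi\circ\psi\simeq 1_K$.

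First I would build $\varphi$ from a Lipschitz partition of unity $\{\rho_j\}$ subordinate to $\mathcal U$. Using property (1) of a good covering, each $\rho_j$ can be taken of the form $\rho_j(x)=\max\{0,\epsilon-\dist(x,M\setminus U_j)\}$ suitably normalized; the bounded-multiplicity condition (2) ensures that the normalization and the sum $\varphi(x):=\sum_j\rho_j(x)v_j$ are $\tilde C/\epsilon$-Lipschitz on $M$ with $\tilde C=\tilde C(C)$, and that $\varphi(x)$ always lies in the simplex spanned by those $v_j$ with $x\in U_j$. Second, I would construct $\psi$ by induction on the skeleta of $K$: choose $\psi(v_j)\in U_j$ arbitrarily, and having defined $\psi$ on the $k$-skeleton so that $\psi$ sends each simplex $\sigma=[v_{j_0},\dots,v_{j_k}]$ into the corresponding intersection $U_\sigma:=U_{j_0}\cap\cdots\cap U_{j_k}$, extend $\psi$ over each $(k{+}1)$-simplex $\tau$ by coning off $\psi|_{\partial\tau}$ via the $(C,\epsilon)$-Lipschitz contraction of $U_\tau$ provided by property (3). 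Because simplices in $K$ have diameter $\epsilon$ and the contractions have time $\epsilon$, each extension introduces only a bounded multiplicative Lipschitz factor, and the total number of inductive steps is bounded by the nerve dimension, which by (2) is $\le C$.

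Third, I would construct the two homotopies by the same skeletal-plus-covering scheme. For $\psi\circ\varphi\simeq 1_M$, observe that for any $x\in M$ both $x$ and $\psi\circ\varphi(x)$ lie in the common intersection $U_{\sigma(x)}$, where $\sigma(x)$ is the carrier of $\varphi(x)$; connect them using the Lipschitz contraction of $U_{\sigma(x)}$ and interpolate across $M$ via $\{\rho_j\}$, organizing the interpolation simplex-by-simplex on $K$ so that it is well-defined on overlaps. The homotopy $\varphi\circ\psi\simeq 1_K$ is built analogously on $K$, inductively over the skeleta: on each simplex $\tau$ of $K$, both $\varphi\circ\psi|_\tau$ and $1_\tau$ take values in the star of $\tau$ in $K$, which is Lipschitz contractible with constants depending only on $C$ because stars in $K$ are the geometric realization of a finite subcomplex of uniformly bounded combinatorial size.

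The main obstacle, and where the quantitative nature of Theorem \ref{thm:nerve} really matters, is controlling the Lipschitz constants across the skeletal inductions for $\psi$ and the two homotopies. A naive concatenation would compound constants exponentially in the skeletal dimension; the saving is that (a) the nerve dimension is bounded by $C$, (b) each coning step uses a $(C,\epsilon)$-Lipschitz contraction of an intersection whose diameter is $O(C\epsilon)$ so the new Lipschitz constant and time stay of order $C$ and $\epsilon$ respectively, and (c) gluing across simplices is done by a partition-of-unity argument, which is multiplicative rather than additive in the number of simplices. Bookkeeping these factors carefully — essentially the same bookkeeping as in \cite[\S3]{MY:lip}, but with the contraction constants now explicit thanks to Definition \ref{dfn:good} — yields a final $\tilde C$ depending only on $C$, independent of $\epsilon$ and of $M$.
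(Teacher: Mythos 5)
Your construction of $\varphi$ via a Lipschitz partition of unity is the same as the paper's $\Theta$, and the Lipschitz estimate is correct. However, your construction of $\psi$ by \emph{forward} skeletal induction has a genuine gap: the inductive invariant ``$\psi(\sigma)\subset U_\sigma$ for every simplex $\sigma$'' cannot be arranged already at the level of vertices. Indeed, it would force $\psi(v_j)\in U_\sigma$ for \emph{every} simplex $\sigma$ containing $v_j$, i.e.\ $\psi(v_j)\in\bigcap_{\sigma\ni v_j}U_\sigma$, and this intersection is typically empty. For a concrete counterexample, take $M=\mathbb R$, $U_1=(-2\epsilon,2\epsilon)$, $U_2=(0,4\epsilon)$, $U_3=(2\epsilon,6\epsilon)$: the nerve has edges $[v_1,v_2]$ and $[v_2,v_3]$, and your invariant would require $\psi(v_2)\in(U_1\cap U_2)\cap(U_2\cap U_3)=\emptyset$. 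Consequently the coning step (coning $\psi|_{\partial\tau}$ inside $U_\tau$) is not even well-posed, since $\psi(\partial\tau)$ need not lie in $U_\tau$, and the later argument that $x$ and $\psi\circ\varphi(x)$ share the common set $U_{\sigma(x)}$ collapses with it. The proposed homotopy $\varphi\circ\psi\simeq1_K$ ``inductively over stars'' inherits the same unresolved issue.

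The paper avoids exactly this obstruction by never constructing $\psi$ directly through the cover. Instead it introduces the auxiliary space $\mathcal D=\bigcup_{\sigma\in\mathcal N}\sigma\times U_\sigma\subset|\mathcal N|\times M$ together with the two projections $p:\mathcal D\to|\mathcal N|$ and $q:\mathcal D\to M$, and the mapping cylinder $\mathcal M$ of $p$ (of height $\epsilon$). It shows that $\tau(M)$ with $\tau(x)=(\Theta(x),x)$ is a $(C,\epsilon)$-Lipschitz strong deformation retract of $\mathcal D$, that $\Psi(|\mathcal N|)$ is one of $\mathcal M$, and that $\iota(\mathcal D)$ is a $(C,\epsilon)$-Lipschitz strong deformation retract of $\mathcal M$; the last one is obtained by \emph{reverse} induction on skeleta (Subclaim \ref{subclm:Phi}), retracting each $\sigma\times K(U_\sigma)$ onto $\sigma\times U_\sigma\times0\cup\partial\sigma\times K(U_\sigma)$ using the $(C,\epsilon)$-Lipschitz contraction of $U_\sigma$. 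The map $\zeta:|\mathcal N|\to M$ is then defined as the composite $q\circ\bar\Phi\circ\Psi$. Going through $\mathcal D$ and $\mathcal M$ and using downward skeletal induction is the key device that makes the bookkeeping work; a naive upward induction on the polyhedron, as in your proposal, does not. If you want a direct skeletal construction of a map $|\mathcal N|\to M$, it is possible when one only wants a domination (not an equivalence), as in the paper's Proposition \ref{prop:dom}, where one first builds a \emph{nested} family $\{U_\sigma\}$ with $U_\tau\supseteq U_\sigma$ for $\tau\le\sigma$ by reverse induction on skeleta using local contractibility — but even there one only obtains $\zeta\circ\Theta\simeq 1_M$, not the full two-sided homotopy equivalence.
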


Let us first explain our terminology.
The \textit{nerve} of a locally finite covering $\mathcal U=\{U_j\}_j$ is a simplicial complex with vertices $\{v_j\}_j$ such that a collection of vertices spans a simplex if and only if the corresponding $U_j$ have nonempty intersection.
We denote it by $\mathcal N=\mathcal N_\mathcal U$.
Note that the condition (2) of Theorem \ref{thm:good} implies that the nerve of a $(C,\epsilon)$-good covering has uniform local finiteness and in particular has uniformly bounded dimension.

Let $K$ be a locally finite simplicial complex.
For simplicity we assume that $K$ has only finitely many vertices $\{v_j\}_{j=1}^N$, but this is not necessary because of the local finiteness.
For $\epsilon>0$, the \textit{$\epsilon$-geometric realization} $|K|=|K|_\epsilon$ of $K$ is defined as follows.
We identify $v_j$ with the $j$-th standard vector of norm $\epsilon$ in Euclidean space, i.e.,
\[v_j=(0,\dots,\overset j\epsilon,\dots,0)\in\mathbb R^N.\]
The set $|K|$ is a subset of the convex hull of $\{v_j\}_j$ corresponding to $K$.
More precisely, a function $\theta:\{v_j\}_j\to[0,1]$ defines an element $\sum_j\theta(v_j)v_j$ of $|K|$ if it satisfies
\begin{enumerate}
\item $\sum_j \theta(v_j)=1;$
\item $\supp\theta$ defines a simplex of $K$ (also denoted by $\supp\theta$).
\end{enumerate}
We consider the length metric on $|K|$ induced by the standard Euclidean metric on $\mathbb R^N$.
We identify a simplex $\sigma\in K$ with its closed realization in $|K|$ and denote it by the same symbol $\sigma$.

\begin{conv}\label{conv:C}
In the following proof, we will use the same symbol $C$ to denote various large positive constants.
Whenever this symbol appears, it means that there exists a constant $C$ such that the claim holds (possibly different from $C$ in Theorem \ref{thm:nerve}, but depending only on it as $\tilde C$).
\end{conv}

\begin{proof}[Proof of Theorem \ref{thm:nerve}]
The proof is along the same lines as in \cite[\S3]{MY:lip}, where the second and third authors proved the qualitative version of this theorem (based on the idea of \cite[9.4.15]{Sha}).
In fact all we have to do is repeat the $\epsilon$-scaled version of the previous argument.
We will skip detailed calculations if they were already done in \cite{MY:lip}.

Let $\mathcal U=\{U_j\}_j$ be a $(C,\epsilon)$-good covering of $M$.
Let $\mathcal N=\mathcal N_{\mathcal U}$ denote its nerve and $|\mathcal N|=|\mathcal N|_\epsilon$ its $\epsilon$-geometric realization.
Note that the condition (2) of Theorem \ref{thm:good} implies that the dimension of $\mathcal N$ is at most $C$.
This fact will be used throughout the proof.
We prove that there exist $C$-Lipschitz maps
\[\Theta:M\to|\mathcal N|,\quad\zeta:|\mathcal N|\to M\]
such that $\zeta\circ\Theta$ and $\Theta\circ\zeta$ are $(C,\epsilon)$-Lipschitz homotopic to the identities of $M$ and $|\mathcal N|$, respectively.

\step{Step 1}
We first define the map $\Theta$ by using the partition of unity subordinate to $\mathcal U$.
Let $f_j=\dist_{M\setminus U_j}$ be the distance function to the complement of $U_j$ and set
\[\xi_j(x):=\frac{f_j(x)}{\sum_if_i(x)}\]
for $x\in M$ (the above $f_j$ is simpler than the one in \cite{MY:lip}, but this is not an essential change).
Then

\begin{clm}\label{clm:xi}
$\xi_j$ is $C/\epsilon$-Lipschitz.
\end{clm}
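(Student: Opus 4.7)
The plan is to verify the bound by splitting into two regimes: $|xy|<\epsilon/2$ (where the estimate requires some bookkeeping) and $|xy|\ge\epsilon/2$ (which is trivial because $0\le\xi_j\le 1$ gives $|\xi_j(x)-\xi_j(y)|\le 1\le 2|xy|/\epsilon$).

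For the near-diagonal case, I would first assemble the pointwise bounds. Each $f_i$ is $1$-Lipschitz as a distance function. Since any $U_i$ is $(C,\epsilon)$-Lipschitz contractible, it has diameter at most $2C\epsilon$, so $0\le f_i\le C\epsilon$ everywhere. For a uniform lower bound on the denominator $S:=\sum_i f_i$, I use condition (1) of Definition \ref{dfn:good}: given $x\in M$, there is an index $j_0$ with $B(x,\epsilon)\subset U_{j_0}$, hence $f_{j_0}(x)\ge\epsilon$, and therefore $S(x)\ge\epsilon$. The same $j_0$ works for any $y$ with $|xy|<\epsilon/2$, provided we shrink to $B(y,\epsilon/2)\subset U_{j_0}$; in any case the same argument applied at $y$ yields $S(y)\ge\epsilon$.

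Next I would establish local finiteness of the sum, which is the one point requiring care. Any $U_i$ containing $x$ meets $U_{j_0}$, so by condition (2) of Theorem \ref{thm:good} there are at most $C$ such indices, and the analogous bound holds at $y$; thus the set of indices contributing to either $S(x)$ or $S(y)$ has cardinality at most $2C$. Combining this with the $1$-Lipschitz estimate for each $f_i$ gives $|S(x)-S(y)|\le 2C|xy|$, and also the upper bound $S(x)\le 2C\cdot C\epsilon$.

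Finally I would plug these bounds into the quotient identity
\[
\xi_j(x)-\xi_j(y)=\frac{f_j(x)\bigl(S(y)-S(x)\bigr)+S(x)\bigl(f_j(x)-f_j(y)\bigr)}{S(x)S(y)},
\]
which yields
\[
|\xi_j(x)-\xi_j(y)|\le\frac{C\epsilon\cdot C|xy|+C\epsilon\cdot|xy|}{\epsilon\cdot\epsilon}\le\frac{C|xy|}{\epsilon}.
\]
The main, and essentially only, obstacle is producing the uniform local finiteness of the partition, which is where condition (2) and the existence of the enveloping $U_{j_0}$ from condition (1) enter; once those are in hand, the Lipschitz bound falls out of the usual computation for partitions of unity.
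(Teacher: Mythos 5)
Your proof is correct and relies on the same ingredients the paper uses: the lower bound $\sum_i f_i\ge\epsilon$ from condition (1), the $1$-Lipschitz property of each $f_i$, and the bounded local multiplicity coming from condition (2). The paper decomposes the quotient difference so that the factor $f_j(y)/\sum_i f_i(y)\le 1$ appears automatically, which makes your diameter bound on $U_i$ and the far-case split on $|xy|$ unnecessary, but this is a minor algebraic variant and your argument reaches the same $C/\epsilon$ estimate.
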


\begin{proof}
By the condition (1) of Theorem \ref{thm:good}, we have $\sum_if_i(x)\ge\epsilon$ for any $x\in M$.
A direct calculation using the triangle inequality shows
\begin{align*}
&\left|\xi_j(x)-\xi_j(y)\right|\\
&\le\left|\frac{f_j(x)-f_j(y)}{\sum_if_i(x)}\right|+\left|\frac{\sum_if_i(x)-\sum_if_i(y)}{\sum_if_i(x)}\right|\frac{f_j(y)}{\sum_if_i(y)}.
\end{align*}
Since $f_j$ is $1$-Lipschitz, the first term is bounded above by $|xy|/\epsilon$.
Similarly, since $\dim\mathcal N\le C$, the second term is bounded above by $C|xy|/\epsilon$.
\end{proof}

We define $\Theta:M\to |\mathcal N|$ by
\[\Theta(x):=\sum_j\xi_j(x)v_j,\]
where $v_j$ is the $j$-th standard vector of norm $\epsilon$.
Then Claim \ref{clm:xi} together with $\dim\mathcal N\le C$ implies that $\Theta$ is $C$-Lipschitz.

\step{Step 2}
Next we construct larger spaces $\mathcal D$ and $\mathcal M$ together with the following three $C$-bi-Lipschitz embeddings
\[
\begin{CD}
M @>\tau >> \mathcal D\\
@.
@VV{\iota}V \\
|\mathcal N| @>>\Psi> \mathcal M.
\end{CD}
\]
We show that their images are $(C,\epsilon)$-Lipschitz strong deformation retracts of the target spaces.
Only the deformation retraction for the $\iota$-image is nontrivial, which is deferred to Step 3.

For a simplex $\sigma\in\mathcal N$, we denote by $U_\sigma$ the corresponding intersection of $U_j$.
We define $\mathcal D$ by
\begin{align*}
\mathcal D&:=\{(\theta, x)\in |\mathcal N|\times M \,|\, x\in U_{\supp\theta}\} \\
 & =\bigcup_{\sigma\in\mathcal N} \sigma\times U_\sigma\subset |\mathcal N|\times M.
\end{align*}
As above we will use $\theta$ and $x$ to denote the $|\mathcal N|$- and $M$-coordinates, respectively.
We also denote the projections by
\[p:\mathcal D\to|\mathcal N|,\quad q:\mathcal D\to M.\]
The $C$-bi-Lipschitz embedding $\tau:M\to\mathcal D$ is given by
\[\tau(x):=(\Theta(x),x),\]
whose inverse is the restriction of $q$.

\begin{clm}\label{clm:tau}
$\tau(M)$ is a $(C,\epsilon)$-Lipschitz strong deformation retract of $\mathcal D$.
\end{clm}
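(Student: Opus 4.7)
The plan is to define the strong deformation retraction by linearly interpolating the $|\mathcal N|$-coordinate toward $\Theta(x)$ while fixing the $M$-coordinate. Explicitly, I would set
\[
H\bigl((\theta, x), t\bigr) := \bigl((1 - t/\epsilon)\,\theta + (t/\epsilon)\,\Theta(x),\; x\bigr), \qquad t\in[0,\epsilon].
\]
By construction $H(\cdot,0)=1_{\mathcal D}$, $H((\theta,x),\epsilon)=(\Theta(x),x)=\tau(x)$, and $H$ fixes each $(\Theta(x),x)\in\tau(M)$ for all time, so the required boundary and invariance conditions come for free.

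The first thing to check is that $H$ actually lands in $\mathcal D$. If $(\theta,x)\in\mathcal D$ then $x\in U_{\supp\theta}$; on the other hand, by the definition of the partition of unity $\xi_j$, the support of $\Theta(x)$ equals $\{j:x\in U_j\}$, which contains $\supp\theta$. Hence the support of the convex combination lies inside $\supp\Theta(x)$, which spans a simplex of $\mathcal N$ (as $x$ lies in each corresponding $U_j$), and $x$ lies in the intersection indexed by this simplex. Thus $H((\theta,x),t)\in\mathcal D$ for every $t\in[0,\epsilon]$.

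It then remains to verify the $C$-Lipschitz bound for $H$ with respect to the $L^1$ product metric. For the time variable, the trajectory is a line segment inside the single simplex indexed by $\supp\Theta(x)$, which has Euclidean diameter at most $\sqrt{2}\epsilon$; traversed over time $\epsilon$, this gives a Lipschitz constant in $t$ bounded by a universal constant. For the space variables, one uses that $\Theta$ is $C$-Lipschitz (Claim \ref{clm:xi} together with $\dim\mathcal N\le C$) and that the projection to $M$ is $1$-Lipschitz; the convex-combination formula then yields the desired bound in $\mathbb R^N$.

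The main technical obstacle I would anticipate is upgrading this $\mathbb R^N$-bound into an estimate in the intrinsic length metric of $|\mathcal N|$, since trajectories from different base points $(\theta,x)$ and $(\theta',x')$ may traverse different simplices and one cannot literally subtract them in a single Euclidean chart. This is handled by the bounded dimension and bounded local finiteness of $\mathcal N$: for pairs of nearby points in $|\mathcal N|$, the intrinsic metric is uniformly comparable to the ambient Euclidean one, with constants depending only on these combinatorial bounds. After this identification the estimate reduces to the $\epsilon$-scaled analogue of the corresponding computation in \cite[\S3]{MY:lip}, and becomes routine bookkeeping of $\epsilon$-factors.
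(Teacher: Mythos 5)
Your homotopy $H$ is exactly the one the paper uses, and your verification that $H$ lands in $\mathcal D$ (via $\supp\theta\subseteq\supp\Theta(x)$) together with the Lipschitz estimate from the $\epsilon$-size of simplices and the $C$-Lipschitz bound on $\Theta$ is the same argument the paper leaves implicit. This is a correct and faithful unpacking of the paper's one-line proof.
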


\begin{proof}
The desired homotopy $H:\mathcal D\times[0,\epsilon]\to\mathcal D$ is given by
\[H(\theta,x,s):=((s/\epsilon)\Theta(x)+(1-s/\epsilon)\theta,x)\]
for $s\in[0,\epsilon]$.
Since each simplex has size $\epsilon$, this map is $C$-Lipschitz.
\end{proof}

We define $\mathcal M$ as the mapping cylinder of $p$ with height $\epsilon$:
\begin{align*}
\mathcal M=\mathcal M_\epsilon(p)&:= \mathcal D\times [0,\epsilon]\cup |\mathcal N|/(\theta, x,\epsilon)\sim \theta\\
&=\bigcup_{\sigma\in K} \sigma\times K(U_{{\sigma}})\subset |\mathcal N|\times K(M).
\end{align*}
Here $K(M)=K_\epsilon(M):=M\times [0,\epsilon]/M\times \epsilon$ denotes the Euclidean cone of height $\epsilon$ equipped with the metric
\begin{align*}
&\left|[x,t],[x',t']\right|^2\\
&:=(\epsilon- t)^2+(\epsilon-t')^2-2(\epsilon-t)(\epsilon-t')\cos\min\{\pi,|xx'|\},
\end{align*}
for $[x,t],[x',t']\in K(M)$ (the reason for this ``reversed'' metric is that the height corresponds to the time of homotopy later).
We consider the metric of $\mathcal M$ induced from the product metric of $|\mathcal N|\times K(M)$.

We define
\[\iota:\mathcal D\to\mathcal M,\quad\Psi:|\mathcal N|\to\mathcal M\]
by natural isometric embeddings.
We also define
\[\Psi':\mathcal M\to|\mathcal N|\]
by a natural map induced by the projection $p:\mathcal D\to|\mathcal N|$, whose restriction to $\Psi(|\mathcal N|)$ is the inverse of $\Psi$.
More precisely, 
\[\iota(\theta,x):=(\theta,x,0),\quad\Psi(\theta):=(\theta,v_M),\quad \Psi'(\theta,[x,t]):=\theta\]
where $v_M$ is the vertex of the cone $K(M)$.

Note that we have the following commutative diagram:

\begin{equation}\label{eq:Theta}
\begin{CD}
M @>\tau>> \mathcal D \\
@V\Theta VV @VV\iota V \\
|\mathcal N| @<<\Psi'< \mathcal M.
\end{CD}
\end{equation}

\begin{clm}\label{clm:Psi}
$\Psi(|\mathcal N|)$ is a $(C,\epsilon)$-Lipschitz strong deformation retract of $\mathcal M$.
\end{clm}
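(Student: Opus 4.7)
The plan is to construct an explicit deformation retraction that pushes each point up its cone fiber to the vertex, which lies in $\Psi(|\mathcal N|)$. For $s\in[0,\epsilon]$, define $H:\mathcal M\times[0,\epsilon]\to\mathcal M$ by
\[H((\theta,[x,t]),s):=(\theta,[x,\min\{t+s,\epsilon\}]).\]
Since a point of $\mathcal M$ satisfies $x\in U_{\supp\theta}$ and shifting $t$ keeps $[x,t]$ inside the cone $K(U_{\supp\theta})$, the formula is well-defined and respects the face identifications defining $\mathcal M$. It reduces to the identity at $s=0$, sends every point to $(\theta,v_M)=\Psi(\theta)$ at $s=\epsilon$, and fixes $\Psi(|\mathcal N|)$ (whose points have $t=\epsilon$) for all $s$.

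The main task, and the main potential obstacle, is to verify the Lipschitz estimate for $H$ with respect to the $L_1$ metric on $\mathcal M\times[0,\epsilon]$. Since $H$ preserves the $|\mathcal N|$-coordinate and each piece $\sigma\times K(U_\sigma)$ carries the product metric, this reduces to showing that the radial shift
\[f_s:[x,t]\mapsto[x,\min\{t+s,\epsilon\}]\]
on the cone is $1$-Lipschitz in $[x,t]$ for fixed $s$ and in $s$ for fixed $[x,t]$. The $s$-direction is immediate: $d(f_s[x,t],f_{s'}[x,t])=|\min\{t+s,\epsilon\}-\min\{t+s',\epsilon\}|\le|s-s'|$. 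For the $[x,t]$-direction I would invoke the standard fact that radial retraction in a Euclidean cone is distance-nonincreasing. Writing $r_i=\epsilon-t_i$ and $\alpha=\min\{\pi,|x_1x_2|\}$, a short expansion gives
\[(r_1-s)^2+(r_2-s)^2-2(r_1-s)(r_2-s)\cos\alpha=[r_1^2+r_2^2-2r_1r_2\cos\alpha]-2s(1-\cos\alpha)(r_1+r_2-s),\]
whose last term is nonnegative when $r_1,r_2\ge s\ge 0$; the truncated cases (one or both $r_i\le s$) follow from the vertex triangle inequality $|d(p,v_M)-d(p',v_M)|\le d(p,p')$.

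A minor subtlety worth checking is that the metric on $\mathcal M$ is the length metric glued from the product metrics on individual pieces $\sigma\times K(U_\sigma)$ rather than a single global product. Because $H$ is given by the same formula on each piece and is continuous across face identifications (the gluing affects only the cone vertex slice and the faces of simplices, on both of which the formula agrees), the $1$-Lipschitz estimate on each piece propagates to the full length metric with the same constant. Combining the two estimates yields $d_{\mathcal M}(H(p,s),H(p',s'))\le d_{\mathcal M}(p,p')+|s-s'|$, so $H$ is $(1,\epsilon)$-Lipschitz, which is more than enough for the claim.
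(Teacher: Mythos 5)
Your construction is correct and essentially the same as the paper's: both push each point radially up the cone fiber to the vertex, and the paper's homotopy $F(\theta,[x,t],s)=(\theta,[x,(1-s/\epsilon)t+s])$ differs from your $H(\theta,[x,t],s)=(\theta,[x,\min\{t+s,\epsilon\}])$ only in using a convex interpolation rather than a truncated additive shift. Your elementary verification that the truncated radial shift is $1$-Lipschitz on the cone, together with the observation that the formula is defined piecewise-coherently across the gluing of $\sigma\times K(U_\sigma)$, fills in exactly the detail the paper defers to \cite[3.3]{MY:lip}.
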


\begin{proof}
The desired homotopy $F:\mathcal M\times[0,\epsilon]\to\mathcal M$ is given by
\[F(\theta,[x,t],s):=(\theta, [x, (1-s/\epsilon)t+s])\]
for $s\in[0,\epsilon]$.
Since the cone has height $\epsilon$, this map is $C$-Lipschitz.
The details are left to the reader (see the proof of \cite[3.3]{MY:lip}).
\end{proof}

\step{Step 3}
Here we prove

\begin{clm}\label{clm:Phi}
There exists a $(C,\epsilon)$-Lipschitz strong deformation retraction $\Phi$ of $\mathcal M$ to $\iota(\mathcal D)$.
\end{clm}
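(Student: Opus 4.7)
The plan is to build $\Phi$ inductively over the skeleton of $\mathcal N$, using the $(C,\epsilon)$-Lipschitz contractibility of each intersection $U_\sigma$ supplied by the good covering. Decompose $\mathcal M=\bigcup_{\sigma\in\mathcal N}P_\sigma$ into prisms $P_\sigma:=\sigma\times K(U_\sigma)$; these are attached to face prisms $P_\tau$ ($\tau\subsetneq\sigma$) along $\tau\times K(U_\sigma)$ and to $|\mathcal N|$ along the ``lid'' $\sigma\times\{v_M\}$, where $v_M$ is the cone point. The target $\iota(\mathcal D)\cap P_\sigma=\sigma\times U_\sigma\times\{0\}$ is the ``floor.'' The naive retraction that simply pushes the cylinder coordinate $t\to 0$ fails to descend to the cone quotient because $v_M$ identifies all of $U_\sigma\times\{\epsilon\}$, so the contractibility of $U_\sigma$ is essential: it allows us to first concentrate the fiber along a canonical center $c_\sigma\in U_\sigma$ in a way compatible with this identification before lowering $t$.

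For the base case $\dim\sigma=0$, fix a $(C,\epsilon)$-Lipschitz strong contraction $h_j\colon U_j\times[0,\epsilon]\to U_j$ with $h_j(\cdot,0)=1_{U_j}$ and $h_j(\cdot,\epsilon)\equiv c_j$. Then
\[
R_j([x,t]):=[h_j(x,t),0]
\]
defines a $C$-Lipschitz retraction of $K(U_j)$ onto $U_j\times\{0\}$, well-defined on the quotient because $h_j(\cdot,\epsilon)\equiv c_j$. A two-phase $(C,\epsilon)$-Lipschitz homotopy from $1_{K(U_j)}$ to $R_j$ fixing the floor pointwise can be written down explicitly: for $s\in[0,\epsilon/2]$ run only the horizontal contraction via $[x,t]\mapsto[h_j(x,2st/\epsilon),t]$, which descends to the cone because the second coordinate is unchanged and equal to $\epsilon$ whenever the first varies with $x$; for $s\in[\epsilon/2,\epsilon]$ lower the second coordinate via $[h_j(x,t),t(2-2s/\epsilon)]$, which descends because the first coordinate is already constant equal to $c_j$ at $t=\epsilon$. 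For the inductive step on a $k$-simplex $\sigma$, $\Phi$ is already defined on $\partial\sigma\times K(U_\sigma)\cup\sigma\times U_\sigma\times\{0\}$ (by the induction hypothesis and the strong-retraction constraint, respectively), and we extend it across $\sigma^\circ\times K(U_\sigma)$ via the analogous two-phase construction using $h_\sigma$, glued to the existing boundary values by a Lipschitz partition of unity on $\sigma$ at scale $\epsilon$.

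The main obstacle is keeping the Lipschitz constant uniformly bounded throughout the induction. Since $\dim\mathcal N\le C$ by Theorem \ref{thm:good}(2), there are uniformly boundedly many induction steps, and at each the constant inflates by a controlled factor coming from the Lipschitz constant of $h_\sigma$, the scale $1/\epsilon$ of the partition of unity on $\sigma$, and the local bound on neighboring simplices---each depending only on $C$. The cone metric on $K(U_\sigma)$ requires care because horizontal displacements are weighted by $\sqrt{(\epsilon-t)(\epsilon-t')}$, but this actually works in our favor near the cone point, where it absorbs the $x$-dependence lost under the identification. The argument parallels \cite[\S3]{MY:lip}, where the qualitative version was carried out; the new quantitative content comes from the uniform Lipschitz contraction of $U_\sigma$ guaranteed by Theorem \ref{thm:good}(3). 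This completes the proof of Claim \ref{clm:Phi}.
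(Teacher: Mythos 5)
Your overall strategy matches the paper's: decompose $\mathcal M$ into prisms $\sigma\times K(U_\sigma)$, retract each prism onto its floor union its boundary face prisms by first running the contraction of $U_\sigma$ and then lowering the cone coordinate, and glue by reverse induction on skeleta (the paper records this as Subclaim \ref{subclm:Phi}). However, there is a genuine gap in the Lipschitz estimate for your second phase, and it is precisely the issue the paper flags in the remark preceding the proof of Subclaim \ref{subclm:Phi}.

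The problem: you only assume $h_j(\cdot,\epsilon)\equiv c_j$. In Phase~2 you map $[x,t]\mapsto[h_j(x,t),\,t(2-2s/\epsilon)]$. Take $t$ close to $\epsilon$ and $x,x'$ with small $|xx'|$: the input cone distance is $\approx(\epsilon-t)|xx'|$, but the output has height $\tau=t(2-2s/\epsilon)$, so the output cone weight $\epsilon-\tau\to 2s-\epsilon>0$ as $t\to\epsilon$, while the horizontal separation $|h_j(x,t)h_j(x',t)|$ need only be bounded by $C|xx'|$ (gradient flows are merely $1$-Lipschitz, not contracting at rate $\epsilon-t$). The ratio of output to input distance is then $\approx (2s-\epsilon)C/(\epsilon-t)$, which is unbounded as $t\to\epsilon$. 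Your assertion that the cone weighting ``works in our favor near the cone point, where it absorbs the $x$-dependence'' is exactly backwards for Phase~2: the weight is small at the input but has \emph{grown} at the output, so nothing absorbs the $x$-dependence of $h_j(x,t)$. The fix is to reparametrize time so that $h_j(\cdot,t)\equiv c_j$ for all $t\ge c\epsilon$ (for a fixed $c<1$, e.g.\ $c=1/10$ or, with your particular formula, $c=1/2$ would do); then for $t\ge c\epsilon$ the first coordinate is frozen, and for $t<c\epsilon$ the cone weight is bounded below by $(1-c)\epsilon$ so the cone metric is bi-Lipschitz to the product and the estimate is trivial. The paper builds this freezing threshold into its hypothesis on $\varphi$ and even points out in a remark that the earlier choice $t\ge\epsilon/2$ in \cite{MY:lip} was insufficient for its formula and had to be tightened to $t\ge\epsilon/10$, so this is not a cosmetic detail but the crux of why the map is Lipschitz across the cone vertex. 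You should state and use this reparametrized form of the contraction, and correct the explanation of the cone-metric role. (Your sketch of the inductive step for $\dim\sigma\ge 1$ is also much lighter than the paper's, which needs a radial projection of $\sigma\times[0,\epsilon]$ and an interpolating height function $w$, but that part is at least consistent in spirit.)
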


We will construct $\Phi$ by reverse induction on the skeleta of $\mathcal N$.
Since the geometric realization $|\mathcal N|$ has the length metric, the problem is reduced to the following claim on each simplex, exactly as in \cite[\S3]{MY:lip}.

\begin{subclm}\label{subclm:Phi}
For any $\sigma\in\mathcal N$, there exists a $(C,\epsilon)$-Lipschitz strong deformation retraction $\Phi_\sigma$ of $\sigma\times K(U_{\sigma})$ to $\sigma\times U_{\sigma}\times 0\cup\partial\sigma\times K(U_{\sigma})$.
\end{subclm}

The desired map $\Phi$ is obtained by gluing $\Phi_\sigma$ by reverse induction on the skeleta.
Since $\dim\mathcal N\le C$, the Lipschitz constant and the time of the homotopy remain uniformly bounded above.

\begin{rem}
There was a minor mistake in the proof of the corresponding claim in \cite{MY:lip} which might confuse the reader.
In the proofs of Sublemma 3.6 and Claim 3.8 in \cite{MY:lip}, we assumed that the the Lipschitz contraction $\varphi$ satisfies $\varphi(\cdot,t)\equiv p$ for $t\ge L/2$.
However, this choice of a constant was not sufficient for the proofs.
A suitable choice is, for example, $\varphi\equiv p$ for $t\ge L/10$.
Actually one may choose an arbitrary small constant $0<c\ll 1$ so that $\varphi\equiv p$ for $t\ge cL$.
\end{rem}

\begin{proof}
The basic idea is that we first contract a neighborhood of the vertex of the cone, then the rest is almost a product on which Lipschitz continuity is easy to prove.

To simplify the notation, we will now omit the subscript $\sigma$.
Let
\[\varphi:U\times[0,\epsilon] \to U\]
be a $(C,\epsilon)$-Lipschitz contraction with center $p\in U$.
We may assume that $\varphi(\cdot,t)\equiv p$ for all $t\ge\epsilon/10$.
We divide the problem into two cases, depending on the dimension of $\sigma$.

\step{Case 1}
Suppose $\dim\sigma=0$.
Define a retraction $r:K(U)\to U\times 0$ by
\[r([x,t]):=[\varphi(x,t),0],\]
which is well-defined for $t=\epsilon$.

Let $g:[0,\epsilon]\to[0,\epsilon]$ be a $C$-Lipschitz function such that
\begin{equation*}
g(s)=
\begin{cases}
\epsilon & s\le\epsilon/3\\
0 & s=\epsilon.
\end{cases}
\end{equation*}
Define $\Phi:K(U)\times[0,\epsilon]\to K(U)$ by
\[\Phi([x,t],s):=[\varphi(x,(s/\epsilon)t),(g(s)/\epsilon)t]\]
for $s\in[0,\epsilon]$.
Then it gives a $(C,\epsilon)$-Lipschitz homotopy between the identity and $r$.
The details are left to the reader (see the proof of \cite[3.8]{MY:lip}).

\step{Case 2}
Suppose $\dim\sigma\ge 1$.
Let
\[\rho=(\psi,u):\sigma\times [0,\epsilon]\to\sigma\times 0\cup\partial\sigma\times [0,\epsilon]\]
be the radial projection from $(\theta_*,2\epsilon)$ in $\sigma\times[0,2\epsilon]$, where $\theta_*$ is the barycenter of $\sigma$.
Since $\sigma$ is a simplex of size $\epsilon$, $\rho$ is a $C$-Lipschitz retraction.

Define a retraction $r:\sigma\times K(U)\to\sigma\times U\times0\cup\partial\sigma\times K(U)$ by
\[r(\theta,[x,t]):=(\psi(\theta,t),[\varphi(x,t-u(\theta,t)),w(\theta,t)]),\]
where $w:\sigma\times[0,\epsilon]\to[0,\epsilon]$ is defined as follows.
Set $\Omega_0,\Omega_1\subset\sigma\times[0,\epsilon]$ by
\[\Omega_0:=u^{-1}[0,\epsilon/10],\quad\Omega_1:=u^{-1}[\epsilon/2,\epsilon]\]
and let $s_i$ be the distance functions to $\Omega_i$ ($i=0,1$) defined on $\sigma\times[0,\epsilon]$.
We then define a function $w$ on $\sigma\times[0,\epsilon]$ by 
\[w(\theta,t):=\frac{s_1}{s_0+s_1}u+\frac{s_0}{s_0+s_1}t\]
so that $w=u$ on $\Omega_0$ and $w=t$ on $\Omega_1$.
Since $\sigma$ is of size $\epsilon$, $w$ is $C$-Lipschitz.
Note that $r$ is well-defined for $t=\epsilon$.

Let $\mu,\nu:[0,\epsilon]\to[0,\epsilon]$ be $C$-Lipschitz functions such that
\begin{equation*}
\mu(s)=
\begin{cases}
0 & s\le\epsilon/2\\
\epsilon & s\ge2\epsilon/3,
\end{cases}
\quad
\nu(s)=
\begin{cases}
0 & s\le2\epsilon/3\\
\epsilon & s\ge3\epsilon/4.
\end{cases}
\end{equation*}
Define $\Phi:\sigma\times K(U)\times[0,\epsilon]\to\sigma\times K(U)$ by
\begin{align*}
\Phi(\theta,[x,t],s):=((&1-(s/\epsilon))\theta+(s/\epsilon)\psi,\\
&[\varphi(x,(\mu/\epsilon)(t-u)),((1-(\nu/\epsilon))t+(\nu/\epsilon)w)])
\end{align*}
for $s\in[0,\epsilon]$.
Then it gives a $(C,\epsilon)$-Lipschitz homotopy between the identity and $r$.
The details are left to the reader (see the proof of \cite[3.6]{MY:lip}).
\end{proof}

\step{Step 4}
The inverse map $\zeta:|\mathcal N|\to M$ is defined by the following commutative diagram:
\begin{equation}\label{eq:zeta}
\begin{CD}
M @<q << \mathcal D \\
@A\zeta AA @AA\bar\Phi A \\
|\mathcal N| @>>\Psi> \mathcal M,
\end{CD}
\end{equation}
where $\bar\Phi:=\Phi(\cdot,\epsilon)$ and $\mathcal D$ is identified with its $\iota$-image.
Now the statement follows from \eqref{eq:Theta}, \eqref{eq:zeta}, Claims \ref{clm:tau}, \ref{clm:Psi}, and \ref{clm:Phi}.
\end{proof}

\begin{rem}\label{rem:nerve}
By construction, for any $x\in M$ we have $x\in U_{\supp\Theta(x)}$.
Furthermore, for any $\theta\in|\mathcal N|$ there exists a vertex of $\supp\theta$ such that the corresponding $U_j$ contains $\zeta(\theta)$.
Therefore $\zeta\circ\Theta(x)$ is contained in some $U_j$ containing $x$.
\end{rem}

We are now in a position to give the first proof of Theorem \ref{thm:main}.

\begin{proof}[Proof of Theorem \ref{thm:main}]
By replacing the constant $C$ in the statement, we may assume that $M,M'\in\mathcal A(n,D,v)$ are $(C^{-1}\epsilon)$-close with respect to the Gromov--Hausdorff distance, where $0<\epsilon<C^{-1}$.
In what follows, we will abuse $C$ to denote various large constants depending only on $n$, $D$, and $v$.
By Theorem \ref{thm:good}, there exist a $(C,\epsilon)$-good covering $\mathcal U$ of $M$ and a corresponding $(C,\epsilon)$-good covering $\mathcal U'$ of $M'$ with the same nerve as $\mathcal U$.
By Theorem \ref{thm:nerve}, $M$ and $M'$ are $(C,\epsilon)$-Lipschitz homotopy equivalent to the $\epsilon$-geometric realization of the nerve.
Therefore $M$ and $M'$ are $(C,\epsilon)$-Lipschitz homotopy equivalent.
More precisely, it is given by
\[M\overset\Theta{\underset\zeta\rightleftarrows}|\mathcal N|=|\mathcal N'|\overset{\Theta'}{\underset{\zeta'}\leftrightarrows} M'.\]
Since $\mathcal U'$ is a lift of $\mathcal U$, an observation similar to Remark \ref{rem:nerve} shows that the $\zeta'\circ\Theta$ and $\zeta\circ\Theta'$ are $C\epsilon$-close to the original Gromov--Hausdorff approximations between $M$ and $M'$.
In particular they are also $C\epsilon$-approximations.
This completes the proof.
\end{proof}

\section{Second proof of Theorem \ref{thm:main}}\label{sec:main2}

In this section, we give the second proof of Theorem \ref{thm:main} using only local contractibility.
We prove Theorem \ref{thm:Peter}, the Lipschitz version of Petersen's stability theorem \cite[Theorem A]{Peter}.
The proof is in part a generalization of the previous argument of good coverings.

Let us first explain our terminology in Theorem \ref{thm:Peter}.
Fix $C>0$.
A metric space is \textit{locally $C$-Lipschitz contractible} if any $\epsilon$-ball is contained in some $(C,\epsilon)$-Lipschitz contractible domain, where $0<\epsilon<C^{-1}$.
A metric space is \textit{locally $C$-doubling} if any $\epsilon$-ball is covered by at most $C$ balls of radius $\epsilon/2$, where $0<\epsilon<C^{-1}$.
These conditions correspond to the assumptions on the contractibility function and the covering dimension, respectively, in the original theorem of Petersen \cite{Peter}.

For Alexandrov spaces, the local $C$-Lipschitz contractibility was already established in Theorem \ref{thm:conv}, and the local $C$-doubling condition follows from the Bishop--Gromov inequality.
Therefore Theorem \ref{thm:Peter} immediately implies Theorem \ref{thm:main}.
As explained in Remark \ref{rem:CAT}, CAT spaces with a uniform local doubling constant also satisfy the assumption of Theorem \ref{thm:Peter}.

The proof of Theorem \ref{thm:Peter} is along almost the same lines as that of \cite[Theorem A]{Peter}.
Only the existence of a domination by a polyhedron (Proposition \ref{prop:dom}), which was used in \cite{Peter} without proof by leaving it to \cite[Ch.\ IV]{Hu}, requires an additional argument.
However, this is a minor modification of the previous section.

In the following proof, we will use Convention \ref{conv:C} as in the previous section.
Furthermore we will use

\begin{conv}\label{conv:C'}
In this section $\tilde C$ denotes a large positive constant depending only on $C$ and $\epsilon$ denotes a small positive number less than some constant depending only on $C$.
More precisely, whenever these symbols appear, it means:
\begin{quote}
for any $C>0$ there exists $\tilde C>0$ such that for any $0<\epsilon<\tilde C^{-1}$ the following holds.
\end{quote}
To be precise, all the statements in this section should be preceded by this sentence, but we will omit it for simplicity.
\end{conv}

\subsection{Lipschitz maps from polyhedra}

We will use the same notation and terminology as in the previous section.
For a (locally finite) simplicial complex $K$, we denote by $|K|=|K|_\epsilon$ its $\epsilon$-geometric realization.
We identify a simplex $\sigma\in K$ with its closed realization in $|K|$ and denote it by $\sigma$.

Furthermore, if $L$ is a subcomplex $K$, we also identify $|L|$ with a subset of $|K|$ (equipped with the metric of $|K|$).
We denote by $K^0$ the $0$-skeleton of $K$, i.e., the set of vertices of $K$.

We first prove the Lipschitz version of \cite[\S2 Main Lemma]{Peter}.
The idea of the proof is similar to that of Subclaim \ref{subclm:Phi}.

\begin{prop}\label{prop:simp}
Let $M$ be a locally $C$-Lipschitz contractible space and $K$ a simplicial complex of dimension $\le C$ with subcomplex $L$.
Let $|K|=|K|_\epsilon$ denote the $\epsilon$-geometric realization.
Suppose
\[f:|K^0|\cup|L|\to M\]
is $C$-Lipschitz.
If $\epsilon$ is small enough, then there exists a $\tilde C$-Lipschitz extension $\tilde f:|K|\to M$ of $f$.
\end{prop}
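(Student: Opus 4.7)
The plan is to extend $f$ by induction on dimension, one skeleton at a time. Having extended $f$ to a $\tilde C_{k-1}$-Lipschitz map on $|K^{k-1}|\cup|L|$, I extend it independently over each $k$-simplex $\sigma$ of $K$ not contained in $L$. Since each extension will agree with the existing values on $\partial\sigma$ and $|K|$ carries the length metric, the glued map on $|K^k|\cup|L|$ is $\tilde C_k$-Lipschitz as soon as each single-simplex extension is. Because $\dim K\le C$, after at most $C$ iterations the final constant $\tilde C$ depends only on $C$.

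For the single-simplex step, the inductive hypothesis places $f(\partial\sigma)$ in a ball $B(p,\tilde C_{k-1}\epsilon)\subset M$, since $\partial\sigma$ has diameter at most $\epsilon$. Local $C$-Lipschitz contractibility (for $\epsilon$ small enough that $\tilde C_{k-1}\epsilon<C^{-1}$) provides a $(C,\tilde C_{k-1}\epsilon)$-Lipschitz contractible domain $U_\sigma\supset B(p,\tilde C_{k-1}\epsilon)$ with contraction $\varphi_\sigma\colon U_\sigma\times[0,\tilde C_{k-1}\epsilon]\to U_\sigma$ to a center $q_\sigma$; after reparametrizing time one may assume $\varphi_\sigma(\cdot,t)\equiv q_\sigma$ for $t\ge\tilde C_{k-1}\epsilon/10$. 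Using the cone parametrization $\theta=(1-s)\psi+s\theta_*$ with $\psi\in\partial\sigma$, $s\in[0,1)$, and $\theta_*$ the barycenter of $\sigma$, I define
\[\tilde f(\theta):=\varphi_\sigma\bigl(f(\psi),\,s\,\tilde C_{k-1}\epsilon\bigr),\qquad\tilde f(\theta_*):=q_\sigma.\]
This agrees with $f$ on $\partial\sigma$ (where $s=0$), is identically $q_\sigma$ on the neighborhood $\{s\ge 1/10\}$ of $\theta_*$, and hence defines a continuous extension over $\sigma$.

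The main obstacle is to verify that $\tilde f$ is $\tilde C_k$-Lipschitz, because the inverse parametrization $\theta\mapsto(\psi,s)$ has Lipschitz constant that blows up as $s\to 1$. The cutoff absorbs the blow-up: on $\{s\le 1/10\}$ one has $1-s\ge 9/10$ and $|\theta_*-\psi|$ is comparable to $\epsilon$, whence a direct computation gives $|\psi_1-\psi_2|\le c\,|\theta_1-\theta_2|$ and $|s_1-s_2|\le c\,\epsilon^{-1}|\theta_1-\theta_2|$ for some $c=c(k)$; composing with the $C$-Lipschitz $\varphi_\sigma$ and the $\tilde C_{k-1}$-Lipschitz map $f|_{\partial\sigma}$ yields a Lipschitz bound of order $C\tilde C_{k-1}$ on this region. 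On $\{s\ge 1/10\}$ the map is constant, and any segment in $\sigma$ joining points on opposite sides of $\{s=1/10\}$ crosses this level set, so the triangle inequality propagates the bound. Setting $\tilde C_k$ proportional to $C\tilde C_{k-1}$ completes the induction and yields the required $\tilde C=\tilde C(C)$.
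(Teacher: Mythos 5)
Your proof follows the same route as the paper's: extend by induction over skeleta, on each new simplex use the cone parametrization toward the barycenter and feed the boundary values into a local Lipschitz contraction, cut off the contraction away from the final time so that the blow-up in the inverse cone parametrization near the barycenter is harmless, and observe that the length metric on $|K|$ reduces the Lipschitz bound to a simplex-by-simplex check. The paper's cutoff is at half-time and yours at one tenth, and you track the time interval of the contraction as $\tilde C_{k-1}\epsilon$ rather than silently absorbing the constant, but these are cosmetic; the argument is the same.
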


\begin{proof}
The proof is almost the same as that of \cite[\S2 Main Lemma]{Peter}.
However, we need to be more careful about Lipschitz constants.
Since the geometric realization has the length metric, it suffices to check the Lipschitz continuity on each simplex.

The map $\tilde f$ is constructed by induction on skeleta.
On $|K^0|\cup|L|$ we define $\tilde f:=f$, which includes the base case.
Let $\sigma\in K\setminus(K^0\cup L)$ be a simplex.
By the induction hypothesis, we may assume that $f$ is already defined on $\partial\sigma$.
By the Lipschitz continuity of $f$, the diameter of $f(\partial\sigma)$ is bounded above by $C\epsilon$.
By assumption, there exists a $(C,\epsilon)$-Lipschitz contractible domain $U$ containing $f(\partial\sigma)$, provided $\epsilon$ is small enough.
We may assume that the $(C,\epsilon)$-Lipschitz contraction
\[\varphi:U\times[0,\epsilon]\to U\]
satisfies $\varphi(\cdot,t)\equiv p$ for all $t\ge\epsilon/2$, where $p$ is the center of the contraction of $U$.

Let $x_\ast$ be the barycenter of $\sigma$.
Any point $x\in\sigma\setminus\{x_\ast\}$ is uniquely represented as a convex combination $t(x)x_\ast+(1-t(x))y(x)$, where $y(x)\in\partial\sigma$ and $0\le t(x)<1$ (we also define $t(x_\ast):=1$).
Observe that the map $x\mapsto(y(x),\epsilon t(x))$ is a $C$-bi-Lipschitz homeomorphism between $t^{-1}[0,1/2]\subset\sigma$ and the product space $\partial\sigma\times[0,\epsilon/2]$.
Therefore if we define $\tilde f$ on $\sigma$ by
\[\tilde f(x):=\varphi(f\circ y(x),\epsilon t(x)),\]
then it gives a $C$-Lipschitz extension of $f$ onto $\sigma$ (well-defined for $x_\ast$).
This completes the induction step.
Note that the Lipschitz constant gets larger at each induction step, but the number of the induction steps is uniformly bounded above by assumption.
\end{proof}

\begin{cor}\label{cor:simp}
Let $M$ be a locally $C$-Lipschitz contractible space and $K$ a simplicial complex of dimension $\le C$.
Let $|K|=|K|_\epsilon$ denote the $\epsilon$-geometric realization.
Suppose $C$-Lipschitz maps
\[f_i:|K|\to M,\quad i=0,\epsilon\]
are $\epsilon$-close in the uniform distance.
If $\epsilon$ is small enough, then there exists a $(\tilde C,\epsilon)$-Lipschitz homotopy between $f_0$ and $f_\epsilon$.
\end{cor}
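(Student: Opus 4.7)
The plan is to reduce the construction of a homotopy to an extension problem and then apply Proposition \ref{prop:simp}. Consider the prism complex $K\times I$, where $I$ denotes the standard $1$-simplex, triangulated so that each prism $\sigma\times I$ (for $\sigma\in K$) is subdivided into simplices of dimension $\dim\sigma+1$ using only the vertices $\sigma^0\times\{0,1\}$. This is the standard prismatic triangulation, and it produces a simplicial complex $K'$ of dimension at most $C+1$ whose vertex set is $(K\times I)^0=K^0\times\{0,1\}$. Its $\epsilon$-geometric realization $|K'|_\epsilon$ is $\tilde C$-bi-Lipschitz equivalent to the $L_1$-product $|K|_\epsilon\times[0,\epsilon]$, with constants depending only on $\dim K$; this is a purely combinatorial fact about prism triangulations and edge-length scaling. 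Under this identification, the subcomplex $L:=K\times\partial I$ realizes as the disjoint union $|K|_\epsilon\times\{0\}\sqcup |K|_\epsilon\times\{\epsilon\}$.

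Next, I would define $g:|L|\to M$ by $g(x,0):=f_0(x)$ and $g(x,\epsilon):=f_\epsilon(x)$. On each of the two components $g$ coincides with a $C$-Lipschitz map, so the only thing to verify is cross-component Lipschitz continuity. For $(x,0),(y,\epsilon)\in|L|$, the distance in $|K'|_\epsilon$ is at least $\epsilon$ (any path must traverse the vertical direction), while
\[|g(x,0)g(y,\epsilon)|\le|f_0(x)f_0(y)|+|f_0(y)f_\epsilon(y)|\le C|xy|+\epsilon.\]
Comparing this with $\max(\epsilon,|xy|)$ shows $g$ is $(C+1)$-Lipschitz on $|L|$. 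Since all vertices of $K'$ lie in $|L|$, the hypothesis $|K'^0|\cup|L|=|L|$ of Proposition \ref{prop:simp} is automatic, and that proposition yields a $\tilde C$-Lipschitz extension $\tilde g:|K'|_\epsilon\to M$.

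Transporting $\tilde g$ back via the bi-Lipschitz identification gives a $\tilde C$-Lipschitz map $H:|K|_\epsilon\times[0,\epsilon]\to M$ with $H(\cdot,0)=f_0$ and $H(\cdot,\epsilon)=f_\epsilon$, which is precisely the desired $(\tilde C,\epsilon)$-Lipschitz homotopy. The main technical point, which I expect to be the principal source of care rather than deep difficulty, is twofold: first, the bookkeeping that the prism triangulation does not introduce extra vertices (so that $|K'^0|\subset|L|$ and no unwanted data needs to be prescribed), and second, the verification that cross-component pairs in $|L|$ satisfy a uniform Lipschitz estimate despite the two maps $f_0,f_\epsilon$ being only uniformly $\epsilon$-close rather than related by a controlled correspondence. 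Both are handled by the elementary estimates above, so no new geometric input beyond Proposition \ref{prop:simp} is needed.
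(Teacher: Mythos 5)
Your proposal is correct and follows essentially the same route as the paper: both use the prismatic triangulation of $K\times I$ (introducing no new vertices), identify its $\epsilon$-geometric realization bi-Lipschitz with $|K|_\epsilon\times[0,\epsilon]$, observe that the $\epsilon$-closeness makes the glued map $f_0\cup f_\epsilon$ uniformly Lipschitz on the two horizontal faces, and then invoke Proposition \ref{prop:simp} with $L = K\times\partial I$. You have simply spelled out the cross-component Lipschitz estimate and the combinatorics of the prism triangulation that the paper leaves implicit.
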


\begin{proof}
The product space $|K|\times[0,\epsilon]$ admits a natural triangulation whose vertices are exactly those of $|K|\times\{0,\epsilon\}$.
Furthermore, $|K|\times[0,\epsilon]$ with the product metric is $C$-bi-Lipschitz homeomorphic to the $\epsilon$-geometric realization of this triangulation.
The assertion then follows from Proposition \ref{prop:simp} by regarding $|K|\times\{0,\epsilon\}$ as $|L|$.
Note that the assumption that $f_0$ and $f_\epsilon$ are $\epsilon$-close implies that the glued map $f_0\cup f_\epsilon$ defined on $|K|\times\{0,\epsilon\}$ is $C$-Lipschitz.
\end{proof}

\subsection{Lipschitz dominations by polyhedra}\label{sec:dom}

The contents of this subsection have no counterparts in \cite{Peter}.

Let $M$ be a metric space that is locally $C$-Lipschitz contractible and locally $C$-doubling.
For sufficiently small $\epsilon>0$ as in Convention \ref{conv:C'}, take a maximal $\epsilon/2$-discrete set $\{p_j\}_j$ in $M$.
Instead of a good covering, we will consider an open covering $\mathcal U=\{B(p_j,2\epsilon)\}_j$ consisting of metric balls.
Let $\mathcal N=\mathcal N_\mathcal U$ denote its nerve and $|\mathcal N|=|\mathcal N|_\epsilon$ its $\epsilon$-geometric realization.

Since $\{p_j\}_j$ is a maximal $\epsilon/2$-discrete set, any $\epsilon$-ball in $M$ is contained in some $B(p_j,2\epsilon)$.
Furthermore, the local $C$-doubling condition implies that $\mathcal N$ has uniform local finiteness.
In other words, this covering still satisfies the conditions (1) and (2) of Theorem \ref{thm:good} (but not (3)).

In this subsection we prove

\begin{prop}\label{prop:dom}
$M$ is $(\tilde C,\epsilon)$-Lipschitz dominated by $|\mathcal N|$, i.e., there exist $\tilde C$-Lipschitz maps
\[\Theta:M\to |\mathcal N|,\quad\zeta:|\mathcal N|\to M\]
such that $\zeta\circ\Theta$ is $(\tilde C,\epsilon)$-Lipschitz homotopic to the identity $1_M$.
\end{prop}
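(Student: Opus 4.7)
The plan is to mirror the proof of Theorem \ref{thm:nerve} from Section \ref{sec:main1}, working with the covering $\mathcal U = \{B(p_j,2\epsilon)\}_j$, which satisfies conditions (1) and (2) of a good covering but not necessarily condition (3) on Lipschitz contractibility of intersections. The intersections $U_\sigma := \bigcap_{v_j \in \sigma} B(p_j,2\epsilon)$ need not themselves be Lipschitz contractible; however, since $\operatorname{diam} U_\sigma < 4\epsilon$, the local Lipschitz contractibility of $M$ furnishes $(C,\epsilon)$-Lipschitz contractible domains $V_\sigma \supset U_\sigma$ that can serve as substitutes.

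First I would define $\Theta\colon M\to|\mathcal N|$ via the partition of unity as in Step 1 of Section \ref{sec:main1}; Claim \ref{clm:xi} shows it is $C$-Lipschitz. Next, define $\zeta\colon|\mathcal N|\to M$ by setting $\zeta(v_j):=p_j$ on the $0$-skeleton and extending via Proposition \ref{prop:simp}. The hypothesis on vertex-set Lipschitz continuity holds simplexwise: if $v_{j_0},\ldots,v_{j_k}$ span a simplex, then $\bigcap_l B(p_{j_l},2\epsilon)\ne\emptyset$ forces $|p_{j_l}p_{j_m}|<4\epsilon$, making the assignment $C$-Lipschitz on the vertex set of each simplex. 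The resulting $\zeta$ is $\tilde C$-Lipschitz, and a direct inspection shows $\zeta\circ\Theta$ is uniformly $C\epsilon$-close to $1_M$, since for each $x$ the value $\zeta(\Theta(x))$ lies in a Lipschitz contractible domain of diameter at most $C\epsilon$ containing the $p_j$'s for $v_j\in\supp\Theta(x)$, each of which is within $2\epsilon$ of $x$.

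The main step is constructing the $(\tilde C,\epsilon)$-Lipschitz homotopy between $\zeta\circ\Theta$ and $1_M$, and this is where the bulk of the work lies. The plan is to adapt Steps 2--4 of Section \ref{sec:main1}. For each $\sigma\in\mathcal N$, select inductively on the dimension of $\sigma$ (possible because $\dim\mathcal N\le C$) a $(C,\epsilon)$-Lipschitz contractible $V_\sigma\supset U_\sigma$ and a Lipschitz contraction $\varphi_\sigma\colon V_\sigma\times[0,\epsilon]\to V_\sigma$ whose center lies in $U_\sigma$, with the nesting condition $V_\tau\supset V_\sigma$ for every face $\tau\subsetneq\sigma$. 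With $\mathcal D:=\bigcup_\sigma\sigma\times U_\sigma$ and the mapping cylinder $\mathcal M$ formed exactly as in Step 2, the analogs of Claims \ref{clm:tau} and \ref{clm:Psi} carry over verbatim. The analog of Subclaim \ref{subclm:Phi} replaces the unavailable contraction of $U_\sigma$ by $\varphi_\sigma$ and operates inside the slightly enlarged simplexwise cylinder $\sigma\times K(V_\sigma)$: the retraction trajectory passes through $V_\sigma$, but terminates inside $U_\sigma$ because the center of $\varphi_\sigma$ was chosen there, so the global retraction lands in $\iota(\mathcal D)$. The diagram of Step 4 then yields the desired homotopy $\zeta\circ\Theta\simeq 1_M$.

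The main obstacle is the careful bookkeeping in the inductive choice of the nested family $\{(V_\sigma,\varphi_\sigma)\}$ and in matching the modified simplexwise retractions along face inclusions so that the glued deformation has Lipschitz constant depending only on $C$. The bounded dimension of $\mathcal N$ (guaranteed by the $C$-doubling assumption on $M$) is essential here: without it, the Lipschitz bound would blow up through the skeletal induction.
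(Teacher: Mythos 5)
Your overall strategy is the right one and agrees with the paper: replace the (not necessarily contractible) ball intersections by a family of Lipschitz contractible domains built inductively over skeleta, and settle for the one-sided homotopy equivalence. However, there is a genuine gap in how you set up $\mathcal D$ and $\mathcal M$.

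You define $\mathcal D=\bigcup_\sigma\sigma\times U_\sigma$ with $U_\sigma=\bigcap_{v_j\in\sigma}B(p_j,2\epsilon)$ the ball intersections, and then run the retraction of Subclaim \ref{subclm:Phi} ``inside the enlarged cylinder $\sigma\times K(V_\sigma)$.'' This is inconsistent: the mapping cylinder $\mathcal M$ is $\bigcup_\sigma\sigma\times K(U_\sigma)$, which does not contain $\sigma\times K(V_\sigma)$, and more seriously the retraction $r$ of \eqref{eq:r} has $M$-coordinate $\varphi_\sigma(x,t-u(\theta,t))$, which is a generic point along a gradient trajectory in $V_\sigma$, not the center. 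So the image of the retraction is generally in $V_\sigma\setminus U_\sigma$, and the retraction does \emph{not} land in $\iota(\mathcal D)$. Your claim that ``the retraction trajectory terminates inside $U_\sigma$ because the center of $\varphi_\sigma$ was chosen there'' only applies to the part near the cone vertex where $\varphi_\sigma\equiv p_\sigma$. The paper's fix is precisely to build $\mathcal D$ and $\mathcal M$ out of the \emph{enlarged} domains $V_\sigma$ themselves (with the nesting $V_\tau\supset V_\sigma$ for $\tau\subset\sigma$); then each $\varphi_\sigma$ maps $V_\sigma$ into itself and the retraction does land in $\iota(\mathcal D)$.

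Relatedly, your remark that ``the analogs of Claims \ref{clm:tau} and \ref{clm:Psi} carry over verbatim'' is contrary to the paper's Remark \ref{rem:tau'}: once $\mathcal D$ is built from the enlarged $V_\sigma$ (as it must be, to make the retraction work), $\tau(M)$ is no longer a deformation retract of $\mathcal D$ because the $V_\sigma$ are not determined by intersections. This is exactly why only a one-sided domination, not an equivalence, is obtained. Since Claim \ref{clm:tau} is not used in the direction $\zeta\circ\Theta\simeq 1_M$, dropping it costs nothing here, but asserting it would mislead. Finally, a minor point: you define $\zeta$ once via Proposition \ref{prop:simp} and then implicitly again via the mapping-cylinder diagram; these do not coincide in general, and you should commit to the diagram definition $\zeta=q\circ\bar\Phi\circ\Psi$ for the homotopy argument to go through cleanly.
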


\begin{rem}
Unlike the case of a good covering, $\Theta\circ\zeta$ is not generally homotopic to $1_{|\mathcal N|}$, due to the lack of the condition (3) of Theorem \ref{thm:good}.
See also Remark \ref{rem:tau'} below.
\end{rem}

\begin{proof}
The proof is along almost the same lines as Theorem \ref{thm:nerve}.
Instead of considering intersections of metric balls defined by the nerve $\mathcal N$, we will construct a family of $(C,\epsilon)$-Lipschitz contractible domains corresponding to $\mathcal N$, which enables us to prove only the one-sided homotopy equivalence.
Once such a family is constructed, the rest of the proof is the same as the previous one, so we will omit it.

\step{Step 1}
As before, the map $\Theta$ is defined by using the partition of unity subordinate to $\mathcal U$.
Let $f_j$ be the distance function to the complement of $B(p_j,2\epsilon)$ and set
\[\xi_j(x):=\frac{f_j(x)}{\sum_i f_i(x)}\]
for $x\in M$.
Since $\mathcal U$ still satisfies the conditions (1) and (2) of Theorem \ref{thm:good}, the same argument as in the proof of Claim \ref{clm:xi} shows

\begin{clm}\label{clm:xi'}
$\xi_j$ is $C/\epsilon$-Lipschitz.
\end{clm}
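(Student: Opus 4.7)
The plan is to replicate the proof of Claim \ref{clm:xi} essentially verbatim, with the only adjustment being that the local finiteness of the covering is now ensured by the $C$-doubling hypothesis rather than by condition (2) of Theorem \ref{thm:good}. No genuinely new idea is needed.

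First I would establish a uniform positive lower bound on the denominator $\sum_i f_i(x)$. By the maximality of the $\epsilon/2$-discrete set $\{p_j\}_j$, every $x\in M$ lies within distance $\epsilon/2$ of some $p_j$; since $f_j = \max\{0,\epsilon - |p_j\cdot|\}$, this gives $f_j(x)\ge \epsilon/2$ and hence $\sum_i f_i(x)\ge \epsilon/2$. Next, using the algebraic decomposition
\[
|\xi_j(x)-\xi_j(y)| \le \frac{|f_j(x)-f_j(y)|}{\sum_i f_i(x)} + \frac{\bigl|\sum_i f_i(x)-\sum_i f_i(y)\bigr|}{\sum_i f_i(x)}\cdot\frac{f_j(y)}{\sum_i f_i(y)},
\]
the first term is at most $2|xy|/\epsilon$ because $f_j$ is $1$-Lipschitz.

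For the second term, the key point is that only boundedly many summands are ever nonzero near a given point: the set $\{i : f_i(x)\ne 0\}$ is exactly $\{i : |p_i x| < \epsilon\}$, and by the $C$-doubling condition together with the $\epsilon/2$-separation of $\{p_j\}_j$, the cardinality of this set is bounded by a constant $C'$ (this is precisely the uniformly bounded local finiteness of $\mathcal N$ noted just above the claim). Since each $f_i$ is $1$-Lipschitz and only at most $2C'$ summands contribute to $\sum_i f_i(x)-\sum_i f_i(y)$, we get $|\sum_i f_i(x)-\sum_i f_i(y)|\le 2C'|xy|$, and combined with $f_j(y)/\sum_i f_i(y)\le 1$ and the lower bound on the denominator, the second term is at most $4C'|xy|/\epsilon$. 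Summing gives $|\xi_j(x)-\xi_j(y)|\le C|xy|/\epsilon$ for some constant $C$ depending only on the original $C$.

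I do not expect any real obstacle. The only conceptual point worth isolating is that the doubling assumption must be invoked at exactly the spot where condition (2) of Theorem \ref{thm:good} was used in Claim \ref{clm:xi}; everything else is a routine triangle-inequality computation.
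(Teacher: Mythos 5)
Your proof is correct and takes essentially the same approach as the paper, which simply observes that conditions (1) and (2) of Theorem \ref{thm:good} still hold for the ball covering and then invokes the computation of Claim \ref{clm:xi} verbatim. One minor imprecision: in a general metric space $f_j$, being the distance to the complement of $B(p_j,\epsilon)$, need not equal $\max\{0,\epsilon-|p_j\cdot|\}$ (only the inequality $f_j\ge\max\{0,\epsilon-|p_j\cdot|\}$ holds), but that one-sided bound is all your argument actually uses.
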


We define a $C$-Lipschitz map $\Theta:M\to |\mathcal N|$ by $\Theta(x):=\sum_j\xi_j(x)v_j$, where $v_j$ is the $j$-th standard vector of norm $\epsilon$.

\step{Step 2}
Next we construct larger spaces $\mathcal D$ and $\mathcal M$ together with three $C$-bi-Lipschitz embeddings
\[
\begin{CD}
M @>\tau >> \mathcal D\\
@.
@VV{\iota}V \\
|\mathcal N| @>>\Psi> \mathcal M.
\end{CD}
\]
This is the only place where an essential modification is required.

We first assign to each simplex $\sigma\in\mathcal N$ a $(C,\epsilon)$-Lipschitz contractible domain $U_\sigma$ in $M$.
This is done by reverse induction on skeleta.
Let $\sigma\in\mathcal N$ be a maximal simplex (with respect to the inclusion relation).
Denote by $B_\sigma$ the corresponding nonempty intersection of $2\epsilon$-balls in $M$.
By the local $C$-Lipschitz contractibility of $M$, there exists a $(C,\epsilon)$-Lipschitz contractible domain $U_\sigma$ containing $B_\sigma$.
Next let $\tau\in\mathcal N$ be a nonmaximal simplex.
Then there exists a $(C,\epsilon)$-Lipschitz contractible domain $U_\tau$ containing
\[B_\tau\cup_{\tau\subset\sigma}U_\sigma,\]
where $\sigma$ contains $\tau$ as a face and $U_\sigma$ is the one given by the induction hypothesis.
In this way we obtain a family $\{U_\sigma\}_{\sigma\in\mathcal N}$ of $(C,\epsilon)$-Lipschitz contractible domains satisfying the following:
\begin{enumerate}
\item $U_\sigma$ contains $B_\sigma$;
\item if $\tau$ is a face of $\sigma$, then $U_\tau$ contains $U_\sigma$.
\end{enumerate}
Note that the Lipschitz constant gets larger at each induction step, but the number of the induction steps is uniformly bounded above by the local $C$-doubling condition.

By using this family, we define $\mathcal D$ in the same way as before:
\begin{align*}
\mathcal D&:=\{(\theta, x)\in |\mathcal N|\times M \,|\, x\in U_{\supp\theta}\} \\
 & =\bigcup_{\sigma\in\mathcal N} \sigma\times U_\sigma\subset |\mathcal N|\times M.
\end{align*}
Here the second equality follows from the condition (2) above.
Let $p:\mathcal D\to|\mathcal N|$ and $q:\mathcal D\to M$ be the projections.
The $C$-bi-Lipschitz embedding $\tau:M\to\mathcal D$ is given by $\tau(x):=(\Theta(x),x)$ with inverse $q$.
Note that the image of $\tau$ is certainly contained in $\mathcal D$ because of the condition (1) above.
As before, we define $\mathcal M$ as the mapping cylinder of $p$ with height $\epsilon$.

The maps $\iota:\mathcal D\to\mathcal M$ and $\Psi:|\mathcal N|\to\mathcal M$ are natural isometric embeddings and $\Psi':\mathcal M\to|\mathcal N|$ is a map induced by the projection $p$.
Then we have the following commutative diagram:

\begin{equation}\label{eq:Theta'}
\begin{CD}
M @>\tau>> \mathcal D \\
@V\Theta VV @VV\iota V \\
|\mathcal N| @<<\Psi'< \mathcal M.
\end{CD}
\end{equation}

The following claim is proved in exactly the same way as Claim \ref{clm:Psi}.

\begin{clm}\label{clm:Psi'}
$\Psi(|\mathcal N|)$ is a $(C,\epsilon)$-Lipschitz strong deformation retract of $\mathcal M$.
\end{clm}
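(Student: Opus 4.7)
The plan is to reuse verbatim the homotopy constructed in the proof of Claim \ref{clm:Psi}, since the only difference between $\mathcal M$ in the two sections is the identity of the $(C,\epsilon)$-Lipschitz contractible domains $U_\sigma$ glued into the mapping cylinder; the cone structure is the same and the deformation retraction acts only on the cone coordinate.

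Concretely, I would define $F:\mathcal M\times[0,\epsilon]\to\mathcal M$ by
\[
F(\theta,[x,t],s):=(\theta,[x,(1-s/\epsilon)t+s])
\]
for $s\in[0,\epsilon]$, where $[x,t]$ denotes a point in the cone $K(U_{\supp\theta})$ with apex at $t=\epsilon$. I first check that this is well-defined: on the gluing relation $(\theta,x,\epsilon)\sim\theta$, the formula yields $[x,\epsilon]$ regardless of $s$, so $F$ descends to the quotient $\mathcal M$. Next, $F(\cdot,0)=1_\mathcal M$ is immediate since $(1-0)t+0=t$, and $F(\cdot,\epsilon)$ sends every $(\theta,[x,t])$ to $(\theta,[x,\epsilon])=\Psi(\theta)$, so the time-$\epsilon$ map retracts $\mathcal M$ onto $\Psi(|\mathcal N|)$. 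For points already in $\Psi(|\mathcal N|)$, which correspond to $t=\epsilon$, we get $(1-s/\epsilon)\epsilon+s=\epsilon$ for every $s$, so $F$ fixes $\Psi(|\mathcal N|)$ throughout the homotopy. Thus $F$ is a strong deformation retraction.

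It remains to verify the $C$-Lipschitz bound. The $|\mathcal N|$-coordinate is untouched. On the cone factor, the reparametrization $t\mapsto(1-s/\epsilon)t+s$ is $1$-Lipschitz in $t$ and $(1-t/\epsilon)$-Lipschitz in $s$; combined with the explicit cone metric
\[
\bigl|[x,t],[x',t']\bigr|^2=(\epsilon-t)^2+(\epsilon-t')^2-2(\epsilon-t)(\epsilon-t')\cos\min\{\pi,|xx'|\},
\]
a direct estimate (identical to the one behind Claim \ref{clm:Psi}, ultimately following \cite[3.3]{MY:lip}) shows that $F$ is $C$-Lipschitz with respect to the $L_1$ metric on $\mathcal M\times[0,\epsilon]$. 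The key point is that the cone has height $\epsilon$, so the factor $(\epsilon-t)$ and its partner $(\epsilon-t')$ shrinking under the homotopy exactly absorb any potentially large $(1/\epsilon)$-factor coming from the time rescaling, leaving a uniform Lipschitz bound.

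I do not expect any real obstacle here: the statement is essentially formal, and the author has already signalled that it is proved in exactly the same way as Claim \ref{clm:Psi}. The only thing worth double-checking is that nothing in the construction of $\{U_\sigma\}_{\sigma\in\mathcal N}$ in Step 2 changes the structure of $\mathcal M$ in a way that would interact with $F$; but since the $U_\sigma$ appear only as the fibres of the cone and $F$ leaves the $x$-coordinate fixed, no such issue arises.
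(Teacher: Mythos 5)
Your proposal matches the paper's proof: the paper simply states that Claim \ref{clm:Psi'} is proved in exactly the same way as Claim \ref{clm:Psi}, which is precisely the homotopy $F(\theta,[x,t],s)=(\theta,[x,(1-s/\epsilon)t+s])$ you wrote down, together with the same height-$\epsilon$ cone-metric estimate. Your verification of well-definedness, the strong deformation retraction properties, and the Lipschitz bound is correct.
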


\begin{rem}\label{rem:tau'}
On the other hand, $\tau(M)$ here is not necessarily a deformation retract of $\mathcal D$ (compare with Claim \ref{clm:tau}).
This is because $U_\sigma$ is no longer the intersection of $U_j$.
For this reason we have only the one-sided homotopy equivalence in Proposition \ref{prop:dom}.
\end{rem}

\step{Step 3}
The next claim is also proved in the same way as Claim \ref{clm:Phi}.

\begin{clm}\label{clm:Phi'}
There exists a $C$-Lipschitz retraction $\bar\Phi$ of $\mathcal M$ to $\iota(\mathcal D)$.
\end{clm}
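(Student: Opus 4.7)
The plan is to mimic the proof of Claim \ref{clm:Phi} essentially verbatim: construct a $(C,\epsilon)$-Lipschitz strong deformation retraction $\Phi:\mathcal M\times[0,\epsilon]\to\mathcal M$ of $\mathcal M$ onto $\iota(\mathcal D)$ and then take $\bar\Phi:=\Phi(\cdot,\epsilon)$. Only the final retraction is needed for the statement, but the deformation retraction comes for free from the same construction and will in fact be used (implicitly) to get the homotopy $\zeta\circ\Theta\simeq 1_M$ in Proposition \ref{prop:dom}.

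Since $|\mathcal N|$ carries the length metric, so does $\mathcal M$, and hence it suffices to construct the deformation retraction simplex-wise and glue. Concretely, using the decomposition
\[\mathcal M=\bigcup_{\sigma\in\mathcal N}\sigma\times K(U_\sigma),\]
I will establish the analogue of Subclaim \ref{subclm:Phi}: for every $\sigma\in\mathcal N$ there exists a $(C,\epsilon)$-Lipschitz strong deformation retraction $\Phi_\sigma$ of $\sigma\times K(U_\sigma)$ onto $\sigma\times U_\sigma\times 0\,\cup\,\partial\sigma\times K(U_\sigma)$. The formulas from Subclaim \ref{subclm:Phi} apply word for word, because the only input they require is a $(C,\epsilon)$-Lipschitz contraction $\varphi:U_\sigma\times[0,\epsilon]\to U_\sigma$ of the relevant $U$, which we now have built in by construction in Step 2 of the proof of Proposition \ref{prop:dom}. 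As before, one may arrange $\varphi(\cdot,t)\equiv p$ for $t\ge\epsilon/10$ by reparametrizing time.

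Gluing these $\Phi_\sigma$ by reverse induction on skeleta of $\mathcal N$ yields the global $\Phi$. The compatibility condition at boundaries is precisely that each $\Phi_\sigma$ fixes $\partial\sigma\times K(U_\sigma)$ pointwise, which is built into the subclaim. At each step the Lipschitz constant and time may worsen by a uniform factor, but the total number of steps is bounded by $\dim\mathcal N$, which is $\le C$ by the $C$-doubling assumption (giving uniform local finiteness of $\mathcal N$). Taking $\bar\Phi:=\Phi(\cdot,\epsilon)$ produces the desired $C$-Lipschitz retraction onto $\iota(\mathcal D)$.

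I do not anticipate a real obstacle: the argument is strictly formal once one observes that the only property of the $U_\sigma$ that was ever used in the proof of Subclaim \ref{subclm:Phi} is $(C,\epsilon)$-Lipschitz contractibility (not that $U_\sigma$ be an intersection of covering elements). The only subtlety worth flagging is that here $U_\sigma$ is \emph{not} the intersection of the $U_j$'s, so one cannot hope for the stronger conclusion that $\tau(M)$ is a deformation retract of $\mathcal D$ (this is the content of Remark \ref{rem:tau'}); fortunately, $\iota(\mathcal D)$ being a deformation retract of $\mathcal M$ is all that is required, and that comes directly from the simplex-wise contractibility of $U_\sigma$.
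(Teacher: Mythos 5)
Your proposal is correct and follows essentially the same route as the paper: reverse induction on skeleta, applying the simplex-wise construction of Subclaim \ref{subclm:Phi} verbatim to the family $\{U_\sigma\}$ from Step 2, and bounding the number of induction steps by $\dim\mathcal N\le C$. One point worth making sharper: you attribute all the compatibility to each $\Phi_\sigma$ fixing $\partial\sigma\times K(U_\sigma)$ pointwise, but that only handles adjacent simplices of the same dimension. The paper explicitly notes that the nesting condition (2) of Step 2, namely $U_\sigma\subset U_\tau$ whenever $\tau$ is a face of $\sigma$, is what makes the composition across dimensions well-defined: after $\bar\Phi_\sigma$ pushes a point into $\tau\times K(U_\sigma)\subset\partial\sigma\times K(U_\sigma)$, this lies in the domain $\tau\times K(U_\tau)$ of the next stage precisely because $U_\sigma\subset U_\tau$. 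You invoke Step 2 implicitly, so the proof stands, but this is the fact doing the real work when moving from one skeleton to the next.
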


\begin{rem}
In fact, there exists a $(C,\epsilon)$-Lipschitz strong deformation retraction $\Phi$ of $M$ to $\iota(\mathcal D)$ such that $\bar\Phi=\Phi(\cdot,\epsilon)$ as in Claim \ref{clm:Phi}.
However, we do not need this homotopy here.
\end{rem}

The proof is by reverse induction on skeleta.
The desired map $\bar\Phi$ is obtained as the compositions of $\bar\Phi_\sigma$ defined on each simplex $\sigma\in\mathcal N$.
Note that the condition (2) in Step 2 guarantees that the composition can be defined.
The construction of $\bar\Phi_\sigma=\Phi_\sigma(\cdot,\epsilon)$ is exactly the same as in Subclaim \ref{subclm:Phi}, so we omit the proof.

\step{Step 4}
The inverse map $\zeta:|\mathcal N|\to M$ is defined by the following commutative diagram:
\begin{equation}\label{eq:zeta'}
\begin{CD}
M @<q << \mathcal D \\
@A\zeta AA @AA\bar\Phi A \\
|\mathcal N| @>>\Psi> \mathcal M.
\end{CD}
\end{equation}
Now the statement follows from \eqref{eq:Theta'}, \eqref{eq:zeta'}, Claims \ref{clm:Psi'}, and \ref{clm:Phi'}.
\end{proof}

\subsection{Lipschitz homotopy}

By using dominations, we obtain

\begin{prop}\label{prop:hom}
Let $M$ and $M'$ be metric spaces that are locally $C$-Lipschitz contractible and locally $C$-doubling.
Suppose $C$-Lipschitz maps
\[f_i:M\to M',\quad i=0,\epsilon\]
are $\epsilon$-close in the uniform distance.
If $\epsilon$ is small enough, then there exists a $(\tilde C,\epsilon)$-Lipschitz homotopy between $f_0$ and $f_\epsilon$.
\end{prop}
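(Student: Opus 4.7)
The plan is to reduce the problem to the analogous statement on the polyhedron $|\mathcal N|$, where Corollary \ref{cor:simp} applies, and then transfer the resulting homotopy back to $M$ via the Lipschitz domination of Proposition \ref{prop:dom}.

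First, I would apply Proposition \ref{prop:dom} to $M$ (since $M$ is locally $C$-Lipschitz contractible and $C$-doubling) to obtain $\tilde C$-Lipschitz maps $\Theta:M\to|\mathcal N|$ and $\zeta:|\mathcal N|\to M$ together with a $(\tilde C,\epsilon)$-Lipschitz homotopy $K:M\times[0,\epsilon]\to M$ between $\zeta\circ\Theta$ and $1_M$. Next I would consider the two compositions $f_i\circ\zeta:|\mathcal N|\to M'$ for $i=0,\epsilon$; each is $\tilde C$-Lipschitz, and since $f_0$ and $f_\epsilon$ are $\epsilon$-close in the uniform distance, so are $f_0\circ\zeta$ and $f_\epsilon\circ\zeta$. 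Since $\dim\mathcal N\le C$ by the $C$-doubling hypothesis on $M$, Corollary \ref{cor:simp} produces a $(\tilde C,\epsilon)$-Lipschitz homotopy $H:|\mathcal N|\times[0,\epsilon]\to M'$ between $f_0\circ\zeta$ and $f_\epsilon\circ\zeta$.

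The desired homotopy between $f_0$ and $f_\epsilon$ is obtained by concatenating three homotopies:
\begin{enumerate}
\item $(x,s)\mapsto f_0(K(x,\epsilon-s))$ connecting $f_0$ to $f_0\circ\zeta\circ\Theta$;
\item $(x,s)\mapsto H(\Theta(x),s)$ connecting $f_0\circ\zeta\circ\Theta$ to $f_\epsilon\circ\zeta\circ\Theta$;
\item $(x,s)\mapsto f_\epsilon(K(x,s))$ connecting $f_\epsilon\circ\zeta\circ\Theta$ to $f_\epsilon$.
\end{enumerate}
Each piece is a composition of $\tilde C$-Lipschitz maps with a $(\tilde C,\epsilon)$-Lipschitz homotopy, so each is a $(\tilde C,\epsilon)$-Lipschitz homotopy in its own right.

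The main technical point is that naively concatenating three such homotopies yields a homotopy of total time $3\epsilon$ rather than $\epsilon$. However, as noted in the paragraph following the definition of $(C,\epsilon)$-Lipschitz homotopy equivalence, a constant rescaling of time can be absorbed into the Lipschitz constant: a $(\tilde C,3\epsilon)$-Lipschitz homotopy becomes a $(3\tilde C,\epsilon)$-Lipschitz homotopy after reparametrizing $t\mapsto 3t$. Enlarging $\tilde C$ accordingly, we obtain the required $(\tilde C,\epsilon)$-Lipschitz homotopy between $f_0$ and $f_\epsilon$, completing the proof. No serious obstacle arises beyond this bookkeeping, since the nontrivial geometric content has already been encapsulated in Propositions \ref{prop:simp} and \ref{prop:dom}.
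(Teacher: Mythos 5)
Your proposal is correct and follows essentially the same route as the paper's proof: use the Lipschitz domination of Proposition \ref{prop:dom} to replace each $f_i$ by $f_i\circ\zeta\circ\Theta$, apply Corollary \ref{cor:simp} to the maps $f_i\circ\zeta$ on the polyhedron $|\mathcal N|$, and concatenate. The paper states this more tersely (leaving the concatenation and time-rescaling implicit), while you spell out the three homotopy pieces and the absorption of the factor $3$ into $\tilde C$, but the argument and the two key ingredients are identical.
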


\begin{proof}
Let $\Theta:M\to|\mathcal N|$ and $\zeta:|\mathcal N|\to M$ be $(C,\epsilon)$-Lipschitz dominations as in Proposition \ref{prop:dom}.
Then $f_i$ is $(C,\epsilon)$-Lipschitz homotopic to $f_i\circ\zeta\circ\Theta$.
Furthermore, Corollary \ref{cor:simp} together with the assumption implies that $f_i\circ\zeta$ ($i=0,\epsilon$) are $(C,\epsilon)$-Lipschitz homotopic.
Therefore,
\[f_0\sim f_0\circ\zeta\circ\Theta\sim f_\epsilon\circ\zeta\circ\Theta\sim f_\epsilon,\]
where $\sim$ denote $(C,\epsilon)$-Lipschitz homotopies.
\end{proof}

Now we are ready to prove Theorem \ref{thm:Peter}

\begin{proof}[Proof of Theorem \ref{thm:Peter}]
Let $M$ and $M'$ be as in the assumption that are $\epsilon$-close in the Gromov--Hausdorff distance, where $\epsilon$ is small enough.
Take a maximal $\epsilon/2$-discrete set $\{p_j\}_j$ of $M$ and construct a $C$-Lipschitz map
\[\Theta:M\to |\mathcal N|\]
as in the proof of Proposition \ref{prop:dom}.
Using the $\epsilon$-approximation, we choose a corresponding (not necessarily $\epsilon/2$-discrete) set $\{p_j'\}_j$ of $M'$.
By Proposition \ref{prop:simp}, we can construct a $C$-Lipschitz map
\[f:|\mathcal N|\to M'\]
such that $f(v_j)=p_j'$ for all $j$, where $v_j$ is the $j$-th vertex of $\mathcal N$ corresponding to $B(p_j,2\epsilon)$ (observe that $f$ is $C$-Lipschitz on $|\mathcal N^0|$).
Then
\[F:=f\circ\Theta\]
is a $C$-Lipschitz map from $M$ to $M'$.
Furthermore, since $F$ maps $p_j$ to a point $C\epsilon$-close to $p_j'$, it is $C\epsilon$-close to the original $\epsilon$-approximation.
Similarly, one can construct a $C$-Lipschitz map $F':M'\to M$ with the same property.
Since $F$ and $F'$ are based on the same approximation, we see that $F'\circ F$ and $F\circ F'$ are $C\epsilon$-close to $1_M$ and $1_{M'}$, respectively.
Then Proposition \ref{prop:hom} shows that these maps are $(C,C\epsilon)$-Lipschitz homotopic, which completes the proof.
\end{proof}

\section{Proof of Theorem \ref{thm:isom}}\label{sec:isom}

In this section we prove Theorem \ref{thm:isom}.
Although the proof is essentially the same as \cite[1.4]{MY:lip}, to clarify the Lipschitz constant estimate, we shall take a slightly different approach.

As in Subsection \ref{sec:mod}, we denote by $\mathcal R_M(\delta,\ell)$ the set of points in an Alexandrov space $M$ having $(n,\delta)$-strainers of length $>\ell$, where $\delta$ is less than a constant depending only on $n$.
We also denote by $\tau(\delta)$ a positive function depending only on $n$ such that $\tau(\delta)\to0$ as $\delta\to0$.
In what follows, we will abuse $\tau(\delta)$ to denote different functions.
We will also abuse $C$ to denote various positive constants, which depends only on $n$, $D$, and $v$, unless otherwise stated.

The following theorem was proved in \cite[9.8]{BGP} (cf.\ \cite{Shi}, \cite{WSS}, \cite{Y:conv}) by using the center of mass technique.

\begin{thm}\label{thm:reg}
There exists $C=C(n)>0$ satisfying the following.
Let $M,M'$ be $n$-dimensional Alexandrov spaces with Gromov--Hausdorff distance $\epsilon<\delta\ell$.
Then there exists a $\tau(\delta)$-almost isometric open embedding
\[g:\mathcal R_M(\delta,\ell)\to M'\]
that is $C\epsilon$-close to the original $\epsilon$-approximation on $\mathcal R_M(\delta,\ell)$.
\end{thm}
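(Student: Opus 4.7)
The plan is to follow the center of mass technique of Burago--Gromov--Perelman. The starting point is the classical fact that a strainer map is a local almost isometry: for $p\in\mathcal R_M(\delta,\ell)$ with $(n,\delta)$-strainer $\{(a_i,b_i)\}_{i=1}^n$ of length $>\ell$, the coordinate map $\varphi_p:B(p,c\delta\ell)\to\mathbb R^n$, $x\mapsto(|a_ix|)_i$, is a $\tau(\delta)$-almost isometric open embedding, where $c=c(n)>0$. The assumption $\epsilon<\delta\ell$ means that if $a_i',b_i'\in M'$ are any lifts of $a_i,b_i$ under the given $\epsilon$-approximation, then $\{(a_i',b_i')\}$ is an $(n,\delta+\tau(\epsilon/(\delta\ell)))$-strainer at the lift $p'$ of $p$; in particular, after replacing $\delta$ by $2\delta$, the map $\varphi_{p'}:B(p',c\delta\ell)\to\mathbb R^n$ is also $\tau(\delta)$-almost isometric.

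Next, I would construct a local almost isometry $f_p:B(p,c\delta\ell/2)\to M'$ by approximately inverting $\varphi_{p'}$ against $\varphi_p$. Concretely, for each $x$ I would set $f_p(x)$ to be the (essentially unique) minimizer on $B(p',c\delta\ell)$ of the squared discrepancy
\[
h_x(y'):=\sum_{i=1}^n\bigl(|a_i'y'|-|a_ix|\bigr)^2.
\]
Because $\varphi_{p'}$ is a $\tau(\delta)$-almost isometric embedding and $\varphi_p(x)$ lies well inside its image (up to error $C\epsilon$), this minimizer exists, is unique up to $\tau(\delta)\epsilon$, and depends on $x$ in a $(1+\tau(\delta))$-bi-Lipschitz way. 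Moreover, since $f_p$ matches distances to the $a_i$ with $a_i'$, it is automatically $C\epsilon$-close to the original $\epsilon$-approximation.

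To globalize, I would cover $\mathcal R_M(\delta,\ell)$ by a locally finite family of strainer neighborhoods $\{B(p_\alpha,c\delta\ell/2)\}$ with a subordinate Lipschitz partition of unity $\{\psi_\alpha\}$, producing on each $B(p_\alpha,c\delta\ell/2)$ a local almost-isometric map $f_{p_\alpha}$ as above. Any two $f_{p_\alpha}(x)$ and $f_{p_\beta}(x)$ that are both defined at $x$ are forced to agree up to $\tau(\delta)\epsilon$, because both lie in $\varphi_{p_\alpha'}^{-1}(\varphi_{p_\alpha}(x))$ up to that error and the strainer maps are almost injective. I would then define $f(x)$ as the Euclidean center of mass of $\{f_{p_\alpha}(x)\}$ weighted by $\{\psi_\alpha(x)\}$, transplanted to $M'$ via a strainer chart around any $f_{p_\alpha}(x)$; thanks to the small pairwise discrepancy, this is well defined and preserves the $\tau(\delta)$-almost isometry and $C\epsilon$-closeness properties.

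The map $f$ is then a $\tau(\delta)$-almost isometric map from $\mathcal R_M(\delta,\ell)$ to $M'$. Injectivity follows from the almost isometry property and the fact that $\tau(\delta)\ll 1$, and openness follows because $f$ is locally $\tau(\delta)$-close to the bi-Lipschitz charts $\varphi_{p'}^{-1}\circ\varphi_p$, combined with invariance of domain for topological $n$-manifolds at strained points. The main obstacle is the globalization in the third step: one has to check that the weighted center of mass stays inside a region where the strainer chart is an honest bi-Lipschitz parameterization, and that averaging does not destroy the almost isometric property. This is handled by working inside balls of radius $c\delta\ell$ in $M'$, where the strainer chart provides a $(1+\tau(\delta))$-bi-Lipschitz identification with a Euclidean region, so that the Euclidean center of mass transports back to $M'$ with only an extra $\tau(\delta)$-error.
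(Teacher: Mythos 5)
The paper does not give its own proof of this theorem: it is quoted from \cite[9.8]{BGP} (cf.\ \cite{WSS}, \cite{Y:conv}), with the explicit remark that the argument uses the center of mass technique. Your proposal reconstructs exactly that technique — local strainer charts as almost isometries, local almost-isometric lifts to $M'$ that are $C\epsilon$-close to the approximation, and globalization by a center-of-mass average over a Lipschitz partition of unity transported through a strainer chart — so it follows essentially the same approach as the cited proof.
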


We will glue the above almost isometry with the Lipschitz homotopy approximation constructed in Theorem \ref{thm:main}.
The next proposition is essential in our gluing argument, which was also used in \cite{MY:lip}.
Here we will omit the subscript $M$ of $\mathcal R_M(\delta,\ell)$ to simplify the notation.

\begin{prop}\label{prop:diag}
There exists $C>0$ (independent of $n$, $\delta$, and $\ell$) satisfying the following.
Let $M$ be an $n$-dimensional Alexandrov space.
Let $\Delta$ denote the diagonal set in $M\times M$.
Then for any $0<\epsilon<\delta\ell$, $\mathcal R(\delta,\ell)^2\cap B(\Delta,\epsilon)$ admits a $(C,\epsilon)$-Lipschitz deformation retraction into $\Delta$ in $B(\Delta,\epsilon)$, i.e., there exists a $C$-Lipschitz map\[\mathbf\Psi:\mathcal R(\delta,\ell)^2\cap B(\Delta,\epsilon)\times[0,\epsilon]\to B(\Delta,\epsilon)\]
such that $\mathbf\Psi(\cdot,0)=1_{\mathcal R(\delta,\ell)^2\cap B(\Delta,\epsilon)}$, $\mathbf\Psi(\cdot,\epsilon)\in\Delta$, and $\mathbf\Psi(\cdot,t)|_\Delta=1_\Delta$ for any $0\le t\le\epsilon$.
\end{prop}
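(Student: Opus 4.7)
The plan is to define $\mathbf\Psi$ by flowing each pair $(x, y)$ along a shortest path toward the midpoint, exploiting the near-Euclidean geometry of the strained part to control the Lipschitz constant. Concretely, for a shortest path $\gamma_{xy}\colon[0, 1] \to M$ from $x$ to $y$ (parametrized proportional to arc length), I would set
\[\mathbf\Psi((x, y), t) := \bigl(\gamma_{xy}(t/(2\epsilon)),\, \gamma_{xy}(1 - t/(2\epsilon))\bigr), \quad t \in [0, \epsilon].\]
The boundary conditions are then immediate: $\mathbf\Psi(\cdot, 0) = 1$, at $t = \epsilon$ both entries collapse to the common midpoint $\gamma_{xy}(1/2)$, so $\mathbf\Psi(\cdot, \epsilon) \in \Delta$; and when $x = y$ the geodesic degenerates to a point, so $\mathbf\Psi$ fixes $\Delta$. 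The image stays inside $B(\Delta, \epsilon)$ because the two coordinates of $\mathbf\Psi((x, y), t)$ are at distance exactly $(1 - t/\epsilon)|xy| < \epsilon$ from each other, hence within $\epsilon$ of the diagonal in any standard product metric on $M\times M$.

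The key geometric input is that, on $\mathcal R(\delta, \ell)$, shortest paths between points at distance $< \epsilon < \delta\ell$ depend $(1 + \tau(\delta))$-Lipschitz continuously on their endpoints, as a consequence of the $\tau(\delta)$-almost isometric strainer charts of Theorem \ref{thm:reg}. In such a chart, shortest paths are $\tau(\delta)$-close to Euclidean segments, for which the interpolation $(x, y, s) \mapsto (1 - s)x + sy$ is $1$-Lipschitz. Pulling this back gives a uniform $C$-Lipschitz estimate for $\mathbf\Psi$ in the $(x, y)$-variables, while Lipschitz continuity in $t$ follows at once since the motion along $\gamma_{xy}$ has speed $|xy|/(2\epsilon) < 1/2$.

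The main obstacle will be making the choice of $\gamma_{xy}$ globally well defined and uniformly Lipschitz, since shortest paths between strained points are not literally unique and different strainer charts may yield slightly different midpoint candidates. I would resolve this by covering $\mathcal R(\delta, \ell)^2 \cap B(\Delta, \epsilon)$ by products of strainer neighborhoods, defining local midpoint maps in each chart, and averaging them via a Lipschitz partition of unity along the lines of the center-of-mass construction used in the proof of Theorem \ref{thm:reg}. Since in each chart any two midpoints differ by $\tau(\delta)|xy|$, the averaged midpoint remains $(1 + \tau(\delta))$-Lipschitz, and the corresponding interpolating map $(x, y, s) \mapsto \gamma_{xy}(s)$ retains the same control; substituting this into the formula for $\mathbf\Psi$ yields the required uniform $C$-Lipschitz retraction while keeping $\Delta$ pointwise fixed.
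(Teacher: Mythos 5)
Your approach is genuinely different from the one the paper takes: you interpolate along shortest paths to the midpoint, using strainer charts and center-of-mass averaging to make the midpoint a Lipschitz function of the endpoints; the paper instead uses the gradient flow of the distance function $f=\dist_{S(\Delta,2\epsilon)}$ on $M\times M$, shows that $f$ is $(1-\tau(\delta))$-regular in the direction to $\Delta$ on $\mathcal R(\delta,\ell)^2\cap B(\Delta,\epsilon)\setminus\Delta$ (via an ``almost geodesic extension'' argument using strainers), observes that $f$ is $(2\epsilon^{-1})$-concave there so the flow is uniformly Lipschitz for time $<2\epsilon$ by Proposition \ref{prop:lip}, and finally reparametrizes by the Lipschitz arrival time $T(\mathbf x)$ to get $\mathbf\Psi(\mathbf x,t)=\mathbf\Phi(\mathbf x,(t/\epsilon)T(\mathbf x))$. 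The paper's route has a structural advantage you should note: non-uniqueness of shortest paths never arises, because the gradient of a semiconcave function is canonical, and the Lipschitz constant of the flow comes for free from the concavity bound $2\epsilon^{-1}$ and the time bound $2\epsilon$, yielding a constant $e^4$ that is manifestly independent of $n$, $\delta$, and $\ell$ as the statement requires.

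Your sketch is plausible in outline, but there are two concrete gaps. First, the assertion that the midpoint map is ``$(1+\tau(\delta))$-Lipschitz'' after a partition-of-unity averaging is not substantiated and is likely too strong: the Lipschitz increment contributed by the gluing is roughly (number of overlapping charts)$\times$(Lipschitz constant of the partition of unity)$\times$(diameter of the set of midpoint candidates); the first factor is a doubling constant and depends on $n$, and $\tau(\delta)$ also depends on $n$, so the resulting $C$ would a priori depend on $n$ and $\delta$, contradicting the proposition's explicit requirement that $C$ be independent of $n$, $\delta$, and $\ell$. (For the downstream use in Theorem \ref{thm:isom} the $n$-dependence would be tolerable, but the proposition as stated promises more.) Second, the center-of-mass interpolant you produce is not literally a shortest path $\gamma_{xy}$ in $M$, so the verification that the image stays in $B(\Delta,\epsilon)$ and that $\Delta$ is fixed pointwise must be redone for the averaged curve; the fix for $\Delta$ is straightforward (each chart interpolant is constant when $x=y$), but the $B(\Delta,\epsilon)$ estimate needs the averaged curve to lie within $\tau(\delta)|xy|$ of a genuine geodesic, which you assert but do not prove. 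Also a small slip: for $(x,y)\in B(\Delta,\epsilon)$ in the $L_2$ product metric one has $|xy|<\sqrt2\,\epsilon$, not $|xy|<\epsilon$; this does not affect the conclusion that the image stays in $B(\Delta,\epsilon)$, since $\dist((a,b),\Delta)\le|ab|/\sqrt2$, but it should be stated correctly.
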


Note that we equip $M\times M$ with the $L_2$ distance to ensure that it is an Alexandrov space.
The following proof is slightly different from the previous one in \cite{MY:lip} to make the Lipschitz constant estimate clearer.

\begin{proof}
The deformation retraction $\mathbf\Psi$ is given by reparameterizing the gradient flow of the distance function $f=\dist_{S(\Delta,2\epsilon)}$ from $S(\Delta,2\epsilon)$.

\step{Step 1}
We first observe that $f$ is $(1-\tau(\delta))$-regular on $\mathcal R(\delta,\ell)^2\cap B(\Delta,\epsilon)\setminus\Delta$ in the direction to $\Delta$, i.e.,
\begin{equation}\label{eq:str}
df(\uparrow_{\mathbf x}^\Delta)>1-\tau(\delta)
\end{equation}
for any $\mathbf x\in\mathcal R(\delta,\ell)^2\cap B(\Delta,\epsilon)\setminus\Delta$.
As mentioned earlier, $\tau(\delta)$ below may represent different functions.

Let us write $\mathbf x=(x_1,x_2)$, where $x_i\in\mathcal R(\delta,\ell)$ and $0<|x_1x_2|<\sqrt 2\epsilon$.
Note that a point of $\Delta$ closest to $\mathbf x$ can be written as $\mathbf y=(y,y)$, where $y$ is a midpoint of a shortest path $x_1x_2$.
Assume first you can extend $x_1x_2$ beyond both endpoints to a shortest path $z_1z_2$ of length $2\sqrt 2\epsilon$ with the same midpoint $y$.
Then it is easy to see that $\mathbf z:=(z_1,z_2)$ is the unique closest point to $\mathbf x$ in $S(\Delta,2\epsilon)$ (the uniqueness follows from the fact that there are no branching geodesics in Alexandrov spaces).
In this case $f$ is $1$-regular at $\mathbf x$ in the direction to $\Delta$.
In general, by using strainers, one can find an ``almost geodesic extension'' $z_1x_1x_2z_2$ in the sense that
\[\tilde\angle yx_iz_i>\pi-\tau(\delta)\]
and $\mathbf z:=(z_1,z_2)$ lies in $S(\Delta,2\epsilon)$ (such extensions are almost unique in the sense that $\tilde\angle z_ix_iz_i'<\tau(\delta)$ for different $z_i'$).
Thus we obtain the $(1-\tau(\delta))$-regularity of $f$.
The details are left to the reader.

\step{Step 2}
Let $\mathbf\Phi$ be the gradient flow of $f$ defined on $\mathcal R(\delta,\ell)^2\cap B(\Delta,\epsilon)$.
For $\mathbf x\in\mathcal R(\delta,\ell)^2\cap B(\Delta,\epsilon)$, we denote by $T(\mathbf x)$ the minimum time for which $\mathbf\Phi(\mathbf x,t)$ reaches $\Delta$.
The above estimate \eqref{eq:str}, together with Lemma \ref{lem:reg}, shows $T(\mathbf x)<2\epsilon$.

Note that $\mathbf\Phi$ is uniformly $C$-Lipschitz for time $<2\epsilon$, where $C$ is a constant independent of any a priori constant.
This is a consequence of Proposition \ref{prop:lip} and the fact that $f$ is $(2\epsilon^{-1})$-concave on $B(\Delta,\epsilon)$ (the latter follows from triangle comparison; see \cite[2.4]{MY:loc}).

We show that the map
\[\mathbf x\mapsto T(\mathbf x)\]
is $2C$-Lipschitz.
Indeed, let $\mathbf y\in\mathcal R(\delta,\ell)^2\cap B(\Delta,\epsilon)$ and assume $T(\mathbf x)<T(\mathbf y)$.
Set $\mathbf x':=\mathbf\Phi(\mathbf x,T(\mathbf x))$ and $\mathbf y':=\mathbf\Phi(\mathbf y,T(\mathbf x))$.
Then we have $|\mathbf x'\mathbf y'|\le C|\mathbf x\mathbf y|$ and thus $|\Delta\mathbf y'|\le C|\mathbf x\mathbf y|$.
Since $\mathbf\Phi$ decreases the distance to $\Delta$ with velocity almost $1$ (\eqref{eq:str}, Lemma \ref{lem:reg}), we obtain $T(\mathbf y')\le2C|\mathbf x\mathbf y|$.
Therefore
\[T(\mathbf y)\le T(\mathbf x)+T(\mathbf y')\le T(\mathbf x)+2C|\mathbf x\mathbf y|.\]

Now we define $\mathbf\Psi:\mathcal R(\delta,\ell)^2\cap B(\Delta,\epsilon)\times[0,\epsilon]\to B(\Delta,\epsilon)$ by
\[\mathbf\Psi(\mathbf x,t):=\mathbf\Phi(\mathbf x,(t/\epsilon)T(\mathbf x)).\]
It is easy to check that $\mathbf\Psi$ satisfies the desired conclusion.
\end{proof}

Let us write $\mathbf\Psi=(\Psi_1,\Psi_2)\in M\times M$.
For $(x,y)\in\mathcal R(\delta,\ell)^2\cap B(\Delta,\epsilon)$, by joining the curves $\Psi_1(x,y,2t)$ on $[0,\epsilon/2]$ and $\Psi_2(x,y,\epsilon-2t)$ on $[\epsilon/2,\epsilon]$, we obtain

\begin{cor}\label{cor:diag}
There is a $C$-Lipschitz map $\Psi:\mathcal R(\delta,\ell)^2\cap B(\Delta,\epsilon)\times[0,\epsilon]\to M$, where $C$ is a constant independent of any a priori constant, such that
\[\Psi(x,y,0)=x,\quad\Psi(x,y,\epsilon)=y,\quad\Psi(x,x,t)=x\]
for any $(x,y)\in\mathcal R(\delta,\ell)^2\cap B(\Delta,\epsilon)$ and $0\le t\le\epsilon$.
\end{cor}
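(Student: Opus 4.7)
My plan is to piece together the two components of the deformation $\mathbf{\Psi}=(\Psi_1,\Psi_2)$ furnished by Proposition \ref{prop:diag} into a single path in $M$ connecting $x$ to $y$ through their common landing point on the diagonal. Writing $z:=\Psi_1(x,y,\epsilon)=\Psi_2(x,y,\epsilon)$, the curve $s\mapsto\Psi_1(x,y,s)$ travels from $x$ to $z$ while $s\mapsto\Psi_2(x,y,s)$ travels from $y$ to $z$, so concatenating the first with the time-reverse of the second yields a path from $x$ to $y$ that passes through $z$ at the midpoint. This is exactly the ``joining'' indicated in the hint preceding the corollary.

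Concretely, I would define $\Psi:\mathcal R(\delta,\ell)^2\cap B(\Delta,\epsilon)\times[0,\epsilon]\to M$ by
\[
\Psi(x,y,t):=\begin{cases}\Psi_1(x,y,2t)&0\le t\le\epsilon/2,\\ \Psi_2(x,y,2\epsilon-2t)&\epsilon/2\le t\le\epsilon.\end{cases}
\]
The two pieces agree at $t=\epsilon/2$ (both equal $z$), so $\Psi$ is well-defined. The endpoint identities $\Psi(x,y,0)=x$ and $\Psi(x,y,\epsilon)=y$ are immediate from the initial conditions of $\mathbf{\Psi}$, and the diagonal condition $\Psi(x,x,t)=x$ follows from $\mathbf{\Psi}(\cdot,s)|_\Delta=1_\Delta$, which forces both $\Psi_1(x,x,\cdot)$ and $\Psi_2(x,x,\cdot)$ to be identically $x$.

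For the Lipschitz estimate, each half is the composition of the $C$-Lipschitz map $\mathbf{\Psi}$ (where $C$ is the constant of Proposition \ref{prop:diag}) with a coordinate projection $M\times M\to M$ and an affine reparametrization of slope $\pm 2$ in the time variable; hence each half is $2C$-Lipschitz in the $L_1$ product metric. Since the two halves agree at the midpoint $t=\epsilon/2$, the standard gluing lemma for Lipschitz functions on a convex time interval produces a $2C$-Lipschitz map on the whole domain. I do not foresee a serious obstacle: the construction is essentially bookkeeping once Proposition \ref{prop:diag} is in hand, and the only point requiring attention is that the Lipschitz constant of $\Psi$ inherits its independence of any a priori constant directly from the corresponding assertion in Proposition \ref{prop:diag}, multiplied by the universal reparametrization factor $2$.
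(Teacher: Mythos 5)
Your construction is exactly the paper's intended argument: the paper's one-line indication "by joining the curves $\Psi_1(x,y,2t)$ and $\Psi_2(x,y,\epsilon-2t)$" is precisely the concatenation you spell out (with the second half reparametrized over $[\epsilon/2,\epsilon]$), and your verification of the endpoint, diagonal, and Lipschitz properties fills in the routine details the paper leaves to the reader.
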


\begin{proof}[Proof of Theorem \ref{thm:isom}]
Assume that $M$ and $M'$ are $(C^{-1}\epsilon)$-close in the Gromov--Hausdorff distance, where $\epsilon<C^{-1}\delta\ell$.
By Theorem \ref{thm:main}, there exists a $(C,\epsilon)$-Lipschitz homotopy approximation $f:M\to M'$ with inverse $f':M'\to M$.
By Theorem \ref{thm:reg}, there exists a $\tau(\delta)$-almost isometric open embedding
\[g:\mathcal R_M(10\delta,\ell/10)\to M'.\]
Note that $f$ and $g$ are $\epsilon$-close on their common domain.
By the openness of $g$, we see that the image of $g$ contains $\mathcal R_{M'}(9\delta,\ell/9)$.

We first glue these two maps $f$ and $g$ by using Corollary \ref{cor:diag} (one can also use the center of mass technique as in \cite{MY:lip}).
Let
\[\Psi':\mathcal R_{M'}(10\delta,\ell/10)^2\cap B(\Delta,C\epsilon)\times[0,\epsilon]\to M'\]
be a $C$-Lipschitz map as in Corollary \ref{cor:diag} (we are rescaling the time interval as usual).
We define $h:M\to M'$ by
\[
h(x):=
\begin{cases}
\Psi'(g(x),f(x),\min\{|\mathcal R_M(2\delta,\ell/2),x|,\epsilon\}) & x\in\mathcal R_M(3\delta,\ell/3)\\
\hfil f(x) &  \text{otherwise}.
\end{cases}
\]
Note that $\epsilon$ is much smaller than $\delta$ and $\ell$.
Then $h$ is a $C$-Lipschitz $C\epsilon$-approximation such that $h=g$ on $\mathcal R_M(2\delta,\ell/2)$ and $h=f$ outside $\mathcal R_M(3\delta,\ell/3)$.
Similarly, we define $h':M'\to M$ by gluing $f'$ and $g^{-1}$ by using
\[\Psi:\mathcal R_{M}(10\delta,\ell/10)^2\cap B(\Delta,C\epsilon)\times[0,\epsilon]\to M\]
so that $h'=g^{-1}$ on $\mathcal R_{M'}(2\delta,\ell/2)$ and $h'=f'$ outside $\mathcal R_{M'}(3\delta,\ell/3)$.
Then we have
\[
h'\circ h(x)=
\begin{cases}
\hfil x & x\in\mathcal R_M(\delta,\ell) \\
f'\circ f(x) & x\notin\mathcal R_M(4\delta,\ell/4).
\end{cases}
\]

Let $F$ be a $(C,\epsilon)$-Lipschitz homotopy between $f'\circ f$ and $1_M$.
Set
\[G(x,t):=\Psi(h'\circ h(x),x,t),\]
which is a $(C,\epsilon)$-Lipschitz homotopy between $h'\circ h$ and $1_M$ defined on $\mathcal R_M(9\delta,\ell/9)$ and fixing $\mathcal R_M(\delta,\ell)$.
Since $h'\circ h=f'\circ f$ outside $\mathcal R_M(4\delta,\ell/4)$, one can glue these two homotopies $F$ and $G$ by using $\Psi$.
More precisely, we define $H:M\times[0,\epsilon]\to M$ by
\begin{align*}
&H(x,t)\\
&:=
\begin{cases}
\Psi(G(x,t),F(x,t),\min\{|\mathcal R_M(4\delta,\ell/4),x|,\epsilon\}) & x\in\mathcal R_M(5\delta,\ell/5)\\
\hfil F(x,t) & \text{otherwise}.
\end{cases}
\end{align*}
Then $H$ is a $(C,\epsilon)$-Lipschitz homotopy between $h'\circ h$ and $1_M$ that fixes $\mathcal R_M(\delta,\ell)$.
Similarly, one can construct a $(C,\epsilon)$-Lipschitz homotopy between $h\circ h'$ and $1_{M'}$ that fixes $\mathcal R_{M'}(\delta,\ell)$.
This completes the proof.
\end{proof}

\section{Proof of Theorem \ref{thm:ext}}\label{sec:ext}

In this section we give two proofs of Theorem \ref{thm:ext}.
The first is a direct corollary of the proofs of Theorem \ref{thm:main} and the second provides a more general way to deform a Lipschitz homotopy approximation to preserve extremal subsets.
Since we have proved Theorem \ref{thm:main} in two different ways, there are consequently three ways to prove Theorem \ref{thm:ext}.

We first define extremal subsets.
Here we state the definition in \cite{Pet:sc} in terms of gradient flows, which is more suitable for our applications than the original one in \cite{PP:ext}.

\begin{dfn}\label{dfn:ext}
A subset $E$ of an Alexandrov space $M$ is said to be \textit{extremal} if the gradient curve of any semiconcave function starting at $p\in E$ remains in $E$.
\end{dfn}

Note that any extremal subset is closed.
A typical example is the boundary of an Alexandrov space.
See \cite[\S4]{Pet:sc} or \cite{PP:ext} for other examples and properties of extremal subsets.

\subsection{First proof}

We prove that the Lipschitz homotopies constructed in Sections \ref{sec:main1} and \ref{sec:main2} automatically preserve extremal subsets (a slight modification is necessary for the latter).
For each statement in Sections \ref{sec:main1} and \ref{sec:main2}, we will provide a complement concerning extremal subsets.
The key fact is that the $(C,\epsilon)$-Lipschitz contraction of Theorem \ref{thm:conv} was given by gradient flows, and hence it automatically preserves extremal subsets.
Note that the contents of this subsection is irrelevant to Lipschitz continuity.

\begin{conv}
In the following complements, we always assume that $M$ and $M'$ are Alexandrov spaces in $\mathcal A(n,D,v)$ (not general metric spaces as in Sections \ref{sec:main1} and \ref{sec:main2}).
We also assume that $(C,\epsilon)$-Lipschitz contractible domains and $(C,\epsilon)$-good coverings are the ones obtained by the proofs of Theorems \ref{thm:conv} and \ref{thm:good} such that the contractions are defined by gradient flows.
\end{conv}

\subsubsection{Complements to Section \ref{sec:main1}}

Let $M\in\mathcal A(n,D,v)$ and let $\mathcal U=\{U_j\}_j$ be a $(C,\epsilon)$-good covering of $M$ given by Theorem \ref{thm:good}.
We use the same notation as in Section \ref{sec:main1}: $\mathcal N$ and $|\mathcal N|$ denote the nerve of $\mathcal U$ and its $\epsilon$-geometric realization, respectively.
Furthermore, for $\sigma\in\mathcal N$, we denote by $U_\sigma$ the corresponding intersection of $U_j$ and by $p_\sigma$ the center of the contraction of $U_\sigma$.

Let $E\subset M$ be an extremal subset.
We define a subcomplex $\mathcal N(E)$ of $\mathcal N$ as a subset consisting of $\sigma\in\mathcal N$ such that $U_\sigma$ intersects $E$.
We identify $|\mathcal N(E)|$ with a subset of $|\mathcal N|$.

Since the Lipschitz contraction of Theorem \ref{thm:good}(3) is given by gradient flows, we have

\begin{lem}\label{lem:cen}
If $\sigma\in\mathcal N(E)$, then $p_\sigma\in E$.
\end{lem}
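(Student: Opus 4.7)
The plan is to read off the statement directly from Definition \ref{dfn:ext} together with the explicit construction of the contraction of $U_\sigma$ given in Theorems \ref{thm:conv} and \ref{thm:good}. First I would pick any point $x \in E \cap U_\sigma$, which exists by the assumption $\sigma \in \mathcal{N}(E)$. It then suffices to trace the trajectory of $x$ under the $(C,\epsilon)$-Lipschitz contraction and observe that it terminates at $p_\sigma$ while remaining in $E$ at all times.

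Next I would recall how the contraction of $U_\sigma$ to $p_\sigma$ is built. It is a composition of two gradient flows of semiconcave functions: the first is the gradient flow of the $(-\lambda)$-concave function defining $U_\sigma$ (the minimum of the strictly concave functions $h_j + t_j\epsilon$ over the vertices $j$ of $\sigma$, which is itself $(-\lambda)$-concave), used to push $U_\sigma$ into a small ball around its peak; the second is the gradient flow of the distance function to a small sphere around $p_\sigma$, used to crush this ball to $p_\sigma$. Both defining functions are semiconcave (distance functions being semiconcave away from their zero set, as noted in Section \ref{sec:pre}), and the only intervening modification is a time reparametrization of the form $\Phi(x,\rho(|qx|)t)$ that changes the speed of traversal but not the trajectory itself.

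By Definition \ref{dfn:ext}, every gradient flow of a semiconcave function preserves $E$, hence the trajectory of $x$ stays in $E$ throughout the full composed flow. Since $E$ is closed and the trajectory terminates at $p_\sigma$, we conclude $p_\sigma \in E$. The only point I would double-check carefully is that every ingredient used to build the contraction really is a gradient flow of a semiconcave function on the ambient Alexandrov space (rather than, say, something pulled back from a limit space) and that the time reparametrization and the gluing of the two flows preserve the invariance property; but this is clear from the construction in Section \ref{sec:conv}, so I do not anticipate any serious obstacle.
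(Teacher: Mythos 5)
Your proof is correct and takes essentially the same approach as the paper: the paper disposes of this lemma with a single sentence observing that the contraction from Theorem \ref{thm:good}(3) is built from gradient flows of semiconcave functions, so that Definition \ref{dfn:ext} applies directly. Your write-up spells out exactly this argument, including the two points worth checking (that all ingredients are genuine gradient flows on the Alexandrov space itself, and that the time reparametrization does not change trajectories); the only very minor imprecision is that the contraction actually chains together three gradient flows rather than two (the flow of the strictly concave function defining $U_\sigma$, then the two flows from the application of Theorem \ref{thm:conv} to the resulting small ball), but this does not affect the argument since invariance of $E$ under each piece is all that is used.
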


Using this lemma, we prove

\begin{comp}[to Theorem \ref{thm:nerve}]\label{comp:nerve}
$(M,E)$ is $(\tilde C,\epsilon)$-Lipschitz homotopy equivalent to $(|\mathcal N|,|\mathcal N(E)|)$.
\end{comp}

\begin{proof}
Let $\Theta:M\to|\mathcal N|$ and $\zeta:|\mathcal N|\to M$ be the $(\tilde C,\epsilon)$-Lipschitz homotopy equivalences constructed in the proof of Theorem \ref{thm:nerve}.
We prove that $\Theta$ and $\zeta$ and their homotopies respect $E$ and $|\mathcal N(E)|$ (see Definition \ref{dfn:rel} for the terminology).

\step{Step 1}
We first show that $\Theta$ and $\zeta$ respect $E$ and $|\mathcal N(E)|$.
It is clear that $\Theta(E)\subset |\mathcal N(E)|$.
To show $\zeta(|\mathcal N(E)|)\subset E$, let us first recall how the map $\zeta$ is constructed.
Observe by \eqref{eq:zeta} that
\[\zeta=q\circ\bar\Phi\circ\Psi,\]
where $\Psi:|\mathcal N|\to\mathcal M$ is an embedding, $q:\mathcal D\to M$ is a projection, and $\bar\Phi=\Phi(\cdot,\epsilon):\mathcal M\to\iota(\mathcal D)=\mathcal D$ is a retraction that is a composition of $\bar\Phi_\sigma=\Phi_\sigma(\cdot,\epsilon)$ constructed in Subclaim \ref{subclm:Phi}.

In what follows, we will focus only on the ``$M$-coordinate'' in $\mathcal M$.
More precisely, if $y=(\theta,[x,t])\in\mathcal M\subset |\mathcal N|\times K(M)$ and if $t\neq\epsilon$, then we call $x$ the \textit{$M$-coordinate} of $y$.

Let $\theta\in|\mathcal N|$ and set $\sigma:=\supp\theta$.
By definition $\Psi(\theta)=(\theta,v_M)$, where $v_M$ is the vertex of $K(M)$.
Depending on $\theta$, there are two possibilities for its $\bar\Phi_\sigma$-image:
\begin{enumerate}
\item the $M$-coordinate of $\bar\Phi_\sigma(\theta,v_M)$ is $p_\sigma$; or
\item $\bar\Phi_\sigma(\theta,v_M)=(\eta,v_M)$ for some $\eta\in\partial\sigma$
\end{enumerate}
(see the proof of Subclaim \ref{subclm:Phi}; in fact there is no natural candidate for the $M$-coordinate other than $p_\sigma$ since $(\theta,v_M)$ has no $M$-coordinate).
Let us next consider $\bar\Phi_\tau\circ\bar\Phi_\sigma(\theta,v_M)$, where $\tau$ is a codimension $1$ face of $\sigma$ for which this composition makes sense.
In the case (1), we see that $p_\sigma$ is moved by the Lipschitz contraction of $U_\tau$ to the $M$-coordinate of $\bar\Phi_\tau\circ\bar\Phi_\sigma(\theta,v_M)$.
In the case (2), we can repeat the same argument as we did for $(\theta,v_M)$.

Repeating this observation, we see that in the $M$-coordinate, $\theta$ is first mapped to some $p_{\sigma'}$, where $\sigma'$ is a face of $\sigma$, and then moved by the Lipschitz contractions of $U_\tau$ to reach $\zeta(\theta)$, where $\tau$ are faces of $\sigma'$.
In particular, if $\theta\in|\mathcal N(E)|$ then $p_{\sigma'}\in E$ by Lemma \ref{lem:cen}.
Since $E$ is closed under gradient flows, this implies $\zeta(\theta)\in E$.

\step{Step 2}
Next we show that the homotopies for $\Theta$ and $\zeta$ respect $E$ and $|\mathcal N(E)|$.
We first consider the homotopy between $\zeta\circ\Theta$ and $1_M$, which is essentially given by Claim \ref{clm:Psi}.
More precisely, for $x\in M$, the homotopy of Claim \ref{clm:Psi} gives a curve from $x$ to $\zeta\circ\Theta(x)$ ($=q\circ\bar\Phi(\Theta(x),v_M)$) defined by
\[s\mapsto q\circ\bar\Phi(\Theta(x),[x,s])\]
for $s\in[0,\epsilon]$.
The same observation as in Step 1 shows that this is a broken gradient curve.
Therefore, if the starting point $x$ lies in $E$, it is entirely contained in $E$.

Next we consider the homotopy between $\Theta\circ\zeta$ and $1_{|\mathcal N|}$, which is essentially given by Claims \ref{clm:tau} and \ref{clm:Phi}.
Recall that $p:\mathcal D\to|\mathcal N|$ is a projection.
For $\theta\in|\mathcal N|$, the homotopy of Claim \ref{clm:tau} gives a line segment between $p\circ\bar\Phi(\theta,v_M)$ and $\Theta\circ\zeta(\theta)$ (=$\Theta(q\circ\bar\Phi(\theta,v_M))$) defined by
\[s\mapsto p\circ H(\bar\Phi(\theta,v_M),s)\]
for $s\in[0,\epsilon]$.
Observe that this line segment lies in $\supp\Theta\circ\zeta(\theta)$.
Furthermore, the homotopy of Claim \ref{clm:Phi} gives a broken line joining $\theta$ and $p\circ\bar\Phi(\theta,v_M)$ defined by
\[s\mapsto p\circ\Phi((\theta,v_M),s)\]
for $s\in[0,\epsilon]$.
This broken line lies in $\supp\theta$.
The homotopy between $\theta$ and $\Theta\circ\zeta(\theta)$ is the concatenation of the above two homotopies.
If $\theta\in|\mathcal N(E)|$, both $\supp\Theta\circ\zeta(\theta)$ and $\supp\theta$ are contained in $|\mathcal N(E)|$ (the former follows from Step 1).
Therefore the homotopy is entirely contained in $|\mathcal N(E)|$.
\end{proof}

To prove Theorem \ref{thm:ext}, we also need the following stability.

\begin{comp}[to Theorem \ref{thm:good}(4)]\label{comp:good}
Let $E$ and $E'$ be extremal subsets of $M$ and $M'$, respectively, that are $(C^{-1}\epsilon)$-close via the Gromov--Hausdorff approximation between $M$ and $M'$.
Then the subcomplexes of the nerves of $\mathcal U$ and $\mathcal U'$ defined by $E$ and $E'$ are the same.
\end{comp}

\begin{proof}
Let $U$ be an intersection of elements of $\mathcal U$ and let $U'$ denote its lift to $M'$.
We show that $U$ intersects $E$ if and only if  $U'$ intersects $E'$.
Let $p$ and $p'$ denote the peaks of $U$ and $U'$, i.e., the unique maximum points of the strictly concave functions $h$ and $h'$ defining $U$ and $U'$, respectively.
By Claim \ref{clm:good}, $U$ and $U'$ contain the $(c_0\epsilon)$-balls around $p$ and $p'$, respectively.
We may assume $C^{-1}\ll c_0$.

We first show that $p$ is $(c_0\epsilon/3)$-close to $p'$ via the Gromov--Hausdorff approximation.
Suppose this does not hold.
Since $h'$ is a lift of $h$, the maximum value $h'(p')$ is $(C^{-1}\epsilon)$-close to $h(p)$.
If $p$ is not $(c_0\epsilon/3)$-close to $p'$, then by lifting it we find
\[p''\in M'\setminus B(p',c_0\epsilon/4)\]
such that $h'(p'')$ is also $(C^{-1}\epsilon)$-close to $h(p)$.
Using the gradient estimate (Lemma \ref{lem:grad}), one can increase the value of $h'$ by moving $p''$ through the gradient flow of $h'$.
Since $C^{-1}\ll c_0$, this gives a point of $M'$ with $h'$-value greater than $h'(p')$.
This is a contradiction.

Suppose $U$ intersects $E$.
It also follows from the gradient estimate and the extremality of $E$ that $p\in E$ (indeed, otherwise one finds a point of $E\setminus\{p\}$ attaining the maximum value of $h|_E$, but then the gradient flow of $h$ going through $E$ increases $h|_E$).
Since $E'$ is $(C^{-1}\epsilon)$-close to $E$ and $p'$ is $(c_0\epsilon/3)$-close to $p\in E$, where $C^{-1}\ll c_0$, we find a point of $E'$ that is $(c_0\epsilon/2)$-close to $p'$.
Since $U'$ contains the $(c_0\epsilon)$-ball around $p'$, $U'$ intersects $E'$.
The converse is proved in exactly the same way.
\end{proof}

Theorem \ref{thm:ext} immediately follows from Complements \ref{comp:nerve} and \ref{comp:good} in the same way as Theorem \ref{thm:main} in Section \ref{sec:main1}.

\begin{rem}
It is worth mentioning that $\mathcal N(E)$ is a \textit{full subcomplex} of $\mathcal N$, that is, a simplex $\sigma\in\mathcal N$ belongs to $\mathcal N(E)$ if and only if every vertex of $\sigma$ belongs to $\mathcal N(E)$.
For the proof, see \cite[5.14]{F:good} and the subsequent paragraph.
It is known that any full subcomplex has a regular neighborhood admitting a deformation retraction onto it (\cite[Ch.\ II \S9]{EF}).
Compare with Theorem \ref{thm:ndr}.
\end{rem}

\subsubsection{Complements to Section \ref{sec:main2}}

Let $M\in\mathcal A(n,D,v)$ and $E$ an extremal subset.
By the proofs and the fact that $(C,\epsilon)$-Lipschitz contractions preserve extremal subsets, we have

\begin{comp}[to Proposition \ref{prop:simp}]\label{comp:simp}
Let $\sigma\in K$ be a simplex with
\[f(v)\in E,\quad f(\sigma\cap|L|)\subset E\]
for any vertex $v\in\sigma$.
Then the extension $\tilde f$ satisfies $\tilde f(\sigma)\subset E$.
\end{comp}

\begin{comp}[to Corollary \ref{cor:simp}]\label{comp:simp'}
Let $L$ be a subcomplex of $K$ with
\[f_i(|L|)\subset E\]
for $i=0,\epsilon$.
Then the homotopy between $f_0$ and $f_\epsilon$ respects $|L|$ and $E$, i.e., the images of $|L|$ are always contained in $E$.
\end{comp}

As in Subsection \ref{sec:dom}, let $\mathcal N$ be the nerve of a covering of $M$ consisting of $2\epsilon$-balls centered at maximal $\epsilon/2$-discrete points $\{p_j\}_j$ and $|\mathcal N|$ its $\epsilon$-geometric realization.
As before, we define a subcomplex $\mathcal N(E)$ of $\mathcal N$ as a subset consisting of simplices such that the corresponding intersections of $2\epsilon$-balls intersect $E$.

The following is proved in the same way as Complement \ref{comp:nerve}.

\begin{comp}[to Proposition \ref{prop:dom}]\label{comp:dom}
$(M,E)$ is $(\tilde C,\epsilon)$-Lipschitz dominated by $(|\mathcal N|,|\mathcal N(E)|)$, i.e., the maps $\Theta$ and $\zeta$, and the homotopy between $\zeta\circ\Theta$ and $1_M$ respect $E$ and $|\mathcal N(E)|$.
\end{comp}

By Complements \ref{comp:simp'} and \ref{comp:dom} we have

\begin{comp}[to Proposition \ref{prop:hom}]\label{comp:hom}
Let $E$ and $E'$ be extremal subsets in $M$ and $M'$, respectively, such that
\[f_i(E)\subset E'\]
for $i=0,\epsilon$.
Then the homotopy between $f_0$ and $f_\epsilon$ respects $E$ and $E'$.
\end{comp}

Theorem \ref{thm:ext} follows from Complements \ref{comp:simp}, \ref{comp:dom} and \ref{comp:hom} in almost the same way as Theorem \ref{thm:main} in Section \ref{sec:main2}.
The only modification is as follows.
Let $M,M'\in\mathcal A(n,D,v)$ and $E\subset M$, $E'\subset M'$ extremal subsets that are sufficiently close in the Gromov--Hausdorff distance.
When constructing a map
\[F=f\circ\Theta:M\to M'\]
as in the proof of Theorem \ref{thm:Peter}, we now have to choose $\{p_j'\}_j\subset M'$ so that $p_j'\in E'$ whenever $B(p_j,2\epsilon)$ intersects $E$.
Then Complements \ref{comp:simp} and \ref{comp:dom} imply that $F(E)\subset E'$.
Similarly we get a map $F':M'\to M$ satisfying $F'(E')\subset E$.
The rest follows from Complement \ref{comp:hom}.

\subsection{Second proof}

We prove that any Lipschitz homotopy approximation between Alexandrov spaces can be deformed to preserve nearby extremal subsets.

The proof is based on the following theorem.
Fix $n$, $D$, and $v$.

\begin{thm}[{\cite[1.5]{F:reg}}]\label{thm:ndr}
There is $C=C(n,D,v)>0$ satisfying the following:
Let $M\in\mathcal A(n,D,v)$ and $E\subset M$ an extremal subset.
Then for any $0<\epsilon<C^{-1}$, the $\epsilon$-neighborhood $B(E,\epsilon)$ of $E$ admits a $(C,\epsilon)$-Lipschitz strong deformation retraction to $E$.
\end{thm}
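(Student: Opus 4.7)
The strategy is a rescaling-and-contradiction argument paralleling the proof of Theorem \ref{thm:conv}. Suppose the statement fails, so that there exist $C_i \to \infty$, $\epsilon_i < C_i^{-1}$, $M_i \in \mathcal A(n,D,v)$, and extremal subsets $E_i \subset M_i$ for which $B(E_i, \epsilon_i)$ admits no $(C_i, \epsilon_i)$-Lipschitz strong deformation retraction to $E_i$. Pick basepoints $p_i \in E_i$ and rescale by $\epsilon_i^{-1}$. Passing to a subsequence, the triples $(\epsilon_i^{-1} M_i, p_i, \epsilon_i^{-1} E_i)$ converge in the pointed Gromov--Hausdorff topology to $(N, p, F)$, where $N$ is a nonnegatively curved $n$-dimensional Alexandrov space (noncollapsing follows from the Bishop--Gromov inequality, which in particular forces $\dim N(\infty) = n - 1$ as in the proof of Theorem \ref{thm:conv}) and $F$ is an extremal subset of $N$ containing $p$, since extremality is preserved under Gromov--Hausdorff limits.

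On the limit $(N, F)$ I would construct a uniform Lipschitz strong deformation retraction of a fixed neighborhood of $F$ to $F$ via the gradient flow of a $1$-Lipschitz $(-\lambda)$-concave function $h$, with $\lambda>0$ a uniform constant, defined on a neighborhood of $F$ in $N$ and satisfying (a) the maximum set of $h$ is exactly $F$, and (b) $h$ is $c$-regular in the direction to $F$ outside $F$, for some uniform $c>0$. Given such $h$, Lemma \ref{lem:reg} ensures that the gradient flow pushes every point into $F$ in uniformly bounded time, Proposition \ref{prop:lip} provides the uniform Lipschitz constant, and the extremality of $F$ together with $\nabla h \equiv 0$ on $F$ keeps the flow stationary on $F$. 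Finally, $h$ lifts to a function $h_i$ on $\epsilon_i^{-1} M_i$ preserving the concavity and regularity bounds (via the same volume-comparison argument as in Step 1 of the proof of Theorem \ref{thm:conv}); the gradient flow of $h_i$ then yields a uniform Lipschitz strong deformation retraction on the unit neighborhood of $\epsilon_i^{-1} E_i$, which rescales back to the forbidden $(C, \epsilon_i)$-Lipschitz strong deformation retraction of $B(E_i, \epsilon_i)$ to $E_i$, a contradiction.

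The main obstacle is the construction of $h$: the Perelman--Petrunin averaging used in Theorem \ref{thm:conv} always produces strictly concave functions with a \emph{single} peak, whereas here one must ``spread the peak'' along the whole set $F$. A natural attempt is to replace the maximal $\delta$-net of rays from $p$ used in Step 1 of the proof of Theorem \ref{thm:conv} by a net of directions in $N(\infty)$ that recede from $F$ as a whole, and to average the associated normalized distance functions; extremality of $F$ should then be used to check that the resulting function is maximized exactly on $F$ rather than at a single point, by symmetrizing the net with respect to $F$ and exploiting that $F$ is invariant under every gradient flow of a semiconcave function. An alternative is to cover $F$ by a uniformly bounded family of small balls (via Bishop--Gromov), apply Theorem \ref{thm:conv} in each ball with peak arranged inside $F$, and glue the local strictly concave functions via a partition of unity so that the maximum set of the glued function is still $F$. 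In both routes, the delicate point is to carry the concavity estimate of Claim \ref{clm:conc} and the regularity in the direction to $F$ uniformly across the construction, which is where extremality enters in an essential way.
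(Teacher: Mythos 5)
Your overall architecture diverges from the paper's, and the divergence exposes a genuine gap. The paper does \emph{not} run a global rescaling-and-contradiction argument. Instead it directly reuses the Lipschitz strong deformation retraction already built in \cite[1.5]{F:reg}: it flows along the gradient of
\[
h=\frac1N\sum_{\alpha=1}^N|q_\alpha\cdot|,
\]
where $\{q_\alpha\}$ is a $\nu$-discrete net lying at uniformly positive distance $\rho$ from $E$, and then checks, constant by constant, that all the bounds in that construction depend only on $n,D,v$. The only place compactness of $\mathcal A(n,D,v)$ enters is Lemma \ref{lem:ndr}, which guarantees a point at distance $>c(n,D,v)$ from any proper extremal subset so that $\rho$ can be taken uniform; everything else (concavity of $h$ near $E$, the bound $d\dist_E(\nabla h)<-c$, the Lipschitzness of the hitting time $T(x)$) comes from volume comparison and Proposition \ref{prop:lip} with explicit dependence on $n,D,v$. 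The retraction is $\Psi(x,t)=\Phi(x,(t/\epsilon)T(x))$, exactly as in Proposition \ref{prop:diag}.

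The central step in your proposal is unrealizable as stated. You ask for a $1$-Lipschitz $(-\lambda)$-concave function $h$ with $\lambda>0$ whose maximum set is \emph{exactly} $F$. A strictly concave function on an Alexandrov space has at most one maximum point, so unless $F$ is a single point no such $h$ can exist; this is not a technical obstruction you can resolve by ``spreading the peak,'' it is a contradiction in terms. Both alternatives you sketch (symmetrizing the Busemann net with respect to $F$, or gluing local peaks by a partition of unity) inherit the same problem: either you lose strict concavity, or the maximum collapses back to a point. This is precisely why \cite{F:reg}, and the paper following it, work with a merely semiconcave $h$ and rely on the $c$-regularity $d\dist_E(\nabla h)<-c$ rather than on concavity to make the flow hit $E$; stationarity on $E$ is achieved not because $E$ is the max set of $h$ but because the flow is stopped at the Lipschitz hitting time $T$, which vanishes on $E$. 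Once you drop the strict concavity requirement and replace ``max set $=F$'' by ``$c$-regular toward $F$,'' your framework would line up with the paper's, but as written the construction of $h$ — which you yourself flag as the main obstacle and leave incomplete — fails for a structural reason, not a technical one. The blow-up-and-contradiction scaffolding is also superfluous once the key lemma (Lemma \ref{lem:ndr}) is isolated, since all the remaining estimates can be extracted explicitly.
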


Although the original statement of \cite[1.5]{F:reg} does not assert the existence of such a uniform constant, one can easily check it by combining the original proof with the following lemma.

\begin{lem}\label{lem:ndr}
Let $M\in\mathcal A(n,D,v)$ and $E\subset M$ a proper extremal subset.
Then there exists $q\in M$ such that
\[|qE|>c(n,D,v),\]
where $c(n,D,v)$ is a positive constant depending only on $n$, $D$, and $v$.
\end{lem}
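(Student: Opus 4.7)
The plan is to argue by contradiction using the compactness of $\mathcal A(n,D,v)$ together with Kapovitch's extension of Perelman's stability theorem to extremal subsets.

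First, I would suppose no such constant $c=c(n,D,v)>0$ exists. This yields a sequence $M_i\in\mathcal A(n,D,v)$ and proper extremal subsets $E_i\subsetneq M_i$ such that $M_i=B(E_i,1/i)$, i.e., every point of $M_i$ lies within $1/i$ of $E_i$. By Gromov's compactness theorem for $\mathcal A(n,D,v)$, I pass to a subsequence with $M_i\to M_\infty$ in the Gromov-Hausdorff topology for some $M_\infty\in\mathcal A(n,D,v)$. After a further diagonal subsequence using the ambient approximations, the closed subsets $E_i$ converge in Hausdorff distance to a closed subset $E_\infty\subset M_\infty$. The density hypothesis immediately forces $E_\infty=M_\infty$, so the pairs $(M_i,E_i)$ converge to $(M_\infty,M_\infty)$ in the Gromov-Hausdorff topology of pairs.

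Next, I would invoke Kapovitch's stability theorem for pairs \cite[\S9]{K:stab}, which asserts that the stability homeomorphism between sufficiently close members of $\mathcal A(n,D,v)$ can be chosen to carry converging extremal subsets onto each other. Applied to $(M_i,E_i)\to(M_\infty,M_\infty)$, this produces, for all sufficiently large $i$, a homeomorphism $\phi_i:M_\infty\to M_i$ with $\phi_i(E_\infty)=E_i$. Since $E_\infty=M_\infty$, this forces $E_i=\phi_i(M_\infty)=M_i$, contradicting the properness of $E_i$ and completing the argument.

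The only point requiring care is to confirm that Kapovitch's pair stability applies in the degenerate situation $E_\infty=M_\infty$. Since $M_\infty$ is trivially an extremal subset of itself, and since the ambient Gromov-Hausdorff approximations already serve as pair-approximations (as one checks directly from the $(1/i)$-density hypothesis on $E_i$), no additional structural hypothesis on $E_\infty$ is needed and Kapovitch's result applies verbatim. This is the main technical point of the outline; apart from it, the argument is a standard compactness-and-stability contradiction.
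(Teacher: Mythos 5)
Your proof is correct in outline and reaches the conclusion by the same compactness-and-contradiction framework, but it routes through a heavier result than the paper does. The paper's argument invokes only \cite[9.13]{K:stab}, which states that a Gromov--Hausdorff limit of proper extremal subsets under a noncollapsing convergence is again a \emph{proper} extremal subset; since the density hypothesis forces the limit to be all of $M_\infty$ (hence not proper), the contradiction is immediate, with no homeomorphism constructed. You instead invoke the full pair stability theorem of \cite[\S9]{K:stab}, a substantially deeper result, to produce homeomorphisms $\phi_i:(M_\infty,E_\infty)\to(M_i,E_i)$ and then deduce $E_i=M_i$. This is logically sound, but it is more machinery than needed, and it obliges you to worry (as you correctly note) whether the pair stability theorem is stated so as to allow the degenerate limit $E_\infty=M_\infty$; the paper's route sidesteps that issue entirely because 9.13 asserts properness of the limit directly, which is exactly what the density forces to fail. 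In short: both arguments are compactness contradictions drawing on \S9 of Kapovitch, but the paper uses the lighter lemma (limits of proper extremal subsets are proper extremal subsets), which is cleaner and avoids the edge case you flagged.
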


\begin{proof}
Any limit of proper extremal subsets under a noncollapsing convergence of Alexandrov spaces is a proper extremal subset (\cite[9.13]{K:stab}).
Hence the statement follows from the compactness of $\mathcal A(n,D,v)$.
\end{proof}

\begin{proof}[Proof of Theorem \ref{thm:ndr}]
We explain how to modify the original proof.
It suffices to check that every constant appearing in the original proof can be chosen depending only on $n$, $D$, and $v$.
In what follows, we denote respectively by $C$ and $c$ various large and small positive constants depending only on $n$, $D$, and $v$.

Recall that the Lipschitz deformation retraction of \cite[1.5]{F:reg} was obtained by reparameterizing the gradient flow $\Phi$ of a function
\[h=\frac1N\sum_{\alpha=1}^N|q_\alpha\cdot|.\]
Here $\{q_\alpha\}_{\alpha=1}^N$ is a $\nu$-discrete set contained in $M\setminus E$ such that $N\ge c/\nu^n$ for sufficiently small $\nu>0$, where $c=c(n,D,v)$.
By Lemma \ref{lem:ndr}, $\{q_\alpha\}_{\alpha=1}^N$ can be chosen so that $\rho:=\min_\alpha|q_\alpha E|>c$.
In particular, in the $\rho/2$-neighborhood of $E$, the concavity of $h$ depends only on $\rho$, which in turn depends only on $n$, $D$, and $v$.
Hence the gradient flow $\Phi$ of $h$ is uniformly $C$-Lipschitz in the $\rho/2$-neighborhood of $E$, where $C=C(n,D,v)$ (as long as the time is uniformly bounded above; see Proposition \ref{prop:lip}).

In \cite[A.1]{F:reg}, we showed that
\[d\dist_E(\nabla h)<-c\]
on $B(E,\epsilon)\setminus E$, where $\epsilon\ll\rho$.
Since the proof relies only on volume comparison, one can check that the constants $c$ and $\epsilon$ depend only on $n$, $D$, and $v$.

The rest of the proof is the same as Step 2 of the proof of Proposition \ref{prop:diag}.
For $x\in B(E,\epsilon)$, let $T(x)$ denote the minimum time such that the gradient flow $\Phi$ starting at $x$ reaches $E$.
From \cite[A.1]{F:reg} above, $T(x)$ is uniformly bounded above by $C\epsilon$, where $C=C(n,D,v)$.

In \cite[A.2]{F:reg}, we showed that the function
\[x\mapsto T(x)\]
is Lipschitz.
Since the proof relies only on \cite[A.1]{F:reg} and the Lipschitz continuity of the gradient flow $\Phi$, one can check that it is uniformly $C$-Lipschitz, where $C=C(n,D,v)$.

The desired deformation retraction $\Psi:B(E,\epsilon)\times[0,\epsilon]\to B(E,\epsilon)$ is given by
\[\Psi(x,t):=\Phi\left(x,(t/\epsilon)T(x)\right).\]
A direct calculation shows that $\Psi$ is also uniformly $C$-Lipschitz.
\end{proof}

\begin{rem}\label{rem:ndr}
As can be seen in the proof, the flow $\Psi$ monotonically decreases the distance to $E$.
This will be used below.
\end{rem}

We now give yet another proof of Theorem \ref{thm:ext}, which actually provides a way to deform a given Lipschitz homotopy approximation to preserve extremal subsets.

\begin{proof}[Proof of Theorem \ref{thm:ext}]
As above, we denote by $C$ various large positive constants depending only on $n$, $D$, and $v$.
Let $M,M'\in\mathcal A(n,D,v)$ and $E\subset M$, $E'\subset M'$ be extremal subsets that are $(C^{-1}\epsilon)$-close in the Gromov--Hausdorff distance, where $0<\epsilon<C^{-1}$.
By Theorem \ref{thm:main}, we may assume there exist $(C,\epsilon)$-Lipschitz homotopy approximations
\[f:M\to M',\quad f':M'\to M\]
with $(C,\epsilon)$-Lipschitz homotopies
\[F:M\times[0,\epsilon]\to M,\quad F':M'\times[0,\epsilon]\to M'\]
connecting $f'\circ f$ and $f\circ f'$ to the identities $1_M$ and $1_{M'}$, respectively (however, these maps do not necessarily have to be the ones given in the proof of Theorem \ref{thm:main} that automatically preserve extremal subsets).
If necessary, taking the Gromov--Hausdorff distance smaller by multiplying by a constant, we may assume that
\begin{gather*}
f(E)\subset B(E',\epsilon),\quad f'(E')\subset B(E,\epsilon),\\
F(E,t)\subset B(E,\epsilon),\quad F'(E',t)\subset B(E',\epsilon)
\end{gather*}
for all $0\le t\le\epsilon$.
We deform these maps to respect $E$ and $E'$.

Let $\Psi$ and $\Psi'$ be, respectively, the $(C,\epsilon)$-Lipschitz strong deformation retractions of $B(E,10\epsilon)$ and $B(E',10\epsilon)$ to $E$ and $E'$ provided by Theorem \ref{thm:ndr}.
We extend $\Psi$ to $\tilde\Psi:M\times[0,\epsilon]\to M$ as follows:
\[
\tilde\Psi(x,t):=
\begin{cases}
\Psi(x,\rho(|Ex|)t) & x\in B(E,10\epsilon)\\
\hfil x & \text{otherwise},
\end{cases}
\]
where $\rho:[0,\infty)\to[0,1]$ is a $(5\epsilon)^{-1}$-Lipschitz function such that $\rho\equiv1$ on $[0,5\epsilon]$ and $\rho\equiv0$ on $[10\epsilon,\infty)$.
It is straightforward to check that $\tilde \Psi$ is $C$-Lipschitz and $\tilde\Psi(\cdot,0)=1_M$, where $C$ is independent of $\epsilon$.
We also define $\tilde\Psi':M'\times[0,\epsilon]\to M'$ in the same manner.
Let $\tilde\Psi_t$ and $\tilde\Psi_t'$ denote $\tilde\Psi(\cdot,t)$ and $\tilde\Psi'(\cdot,t)$, respectively.

Now we define new maps $\tilde f:M\to M'$ and $\tilde f':M'\to M$ by
\[\tilde f:=\tilde \Psi_\epsilon'\circ f,\quad\tilde  f':=\tilde\Psi_\epsilon\circ f'.\]
Then they are $C$-Lipschitz $10\epsilon$-approximations satisfying $\tilde f(E)\subset E'$ and $\tilde f'(E')\subset E$.
Moreover,
\[\tilde f'\circ\tilde f=\tilde\Psi_\epsilon\circ f'\circ\tilde \Psi'_\epsilon\circ f\sim\tilde\Psi_\epsilon\circ f'\circ f\sim\tilde\Psi_\epsilon\sim1_M,\]
where $\sim$ denote $(C,\epsilon)$-Lipschitz homotopies.
It is easy to see that these homotopies respect $E$ (note $\tilde\Psi_t'\circ f(E)\subset B(E',\epsilon)$ by Remark \ref{rem:ndr}).
Similarly, $\tilde f\circ\tilde f'$ is $(C,\epsilon)$-Lipschitz homotopic to $1_{M'}$ while respecting $E'$.
This completes the proof.
\end{proof}

\end{document}